\documentclass[12pt,oneside]{article}

\usepackage{amssymb}
\usepackage{amsmath}
\usepackage{amsfonts}
\usepackage{longtable}
\usepackage{tikz-cd}

\pagestyle{plain}

\usepackage{amsfonts}
\def\dind{\mathop{\mathpalette\Ind{}}^{\text{d}} }

\newtheorem{theorem}{Theorem}[section]
\newtheorem{lemma}[theorem]{Lemma}
\newtheorem{proposition}[theorem]{Proposition}
\newtheorem{corollary}[theorem]{Corollary}
\newtheorem{definition}[theorem]{Definition}

\newtheorem{example}[theorem]{Example}

\newtheorem{fact}[theorem]{Fact}

\newtheorem{remark}[theorem]{Remark}

\newenvironment{ack}
{\begin{trivlist}  \item \textsc{Acknowledgments}~} {\end{trivlist}}

\newenvironment{proof}
{\begin{trivlist}  \item \textsc{Proof:}~} {\hfill $\Box$
\end{trivlist}}

\newenvironment{claim}
{\begin{trivlist}  \item \textsc{Claim}~} {\end{trivlist}}
\newenvironment{proof of claim}
{\begin{trivlist}  \item \textsc{Proof of Claim:}~} {\hfill $\Box$ (\textsc{Claim})
\end{trivlist}}

\newenvironment{proof of theorem}
{\begin{trivlist}  \item \textsc{Proof of Corollary \ref{sym}:}~} {\hfill $\Box$
\end{trivlist}}

\def \Th {\operatorname{Th}}

\def \res_1 {\operatorname{res_1}}

\def \res {\operatorname{res}}

\def \R{\mathcal{R}}

\def\Ind#1#2{#1\setbox0=\hbox{$#1x$}\kern\wd0\hbox to 0pt{\hss$#1\mid$\hss}
\lower.9\ht0\hbox to 0pt{\hss$#1\smile$\hss}\kern\wd0}
\def\ind{\mathop{\mathpalette\Ind{}}}
\def\Notind#1#2{#1\setbox0=\hbox{$#1x$}\kern\wd0\hbox to 0pt{\mathchardef
\nn=12854\hss$#1\nn$\kern1.4\wd0\hss}\hbox to
0pt{\hss$#1\mid$\hss}\lower.9\ht0 \hbox to
0pt{\hss$#1\smile$\hss}\kern\wd0}
\def\nind{\mathop{\mathpalette\Notind{}}}

\def\dind{\mathop{\mathpalette\Ind{}}^{\text{d}} }

\def \ra{R\langle a \rangle}

\newcommand{\ma}{\mathfrak{m}}
\newcommand{\type}{\ensuremath{\mathrm{tp}}}
\newcommand{\tp}{\ensuremath{\mathrm{tp}}}
\newcommand{\dcl}{\ensuremath{\mathrm{dcl}}}


\newcommand{\bk}{\mathbf{k}}

\begin{document}

\title{Quantifier elimination for o-minimal structures expanded by a valuational cut}
\author{Clifton Ealy and Jana Ma\v{r}\'ikov\'{a} \\Dept. of Mathematics and Philosophy, WIU}
\maketitle

\begin{abstract}
Let $R$ be an o-minimal expansion of a group in a language in which $\Th (R)$ eliminates quantifiers, and let $C$ be a predicate for a valuational cut in $R$.  We identify a  condition that implies quantifier elimination for $\Th (R,C)$ in the language of $R$ expanded by $C$ and a small number of constants, and which, in turn, is implied by $\Th (R,C)$ having quantifier elimination and being universally axiomatizable.  The condition applies for example in the case when $C$ is a convex subring of an o-minimal field $R$ and its residue field is o-minimal.
\end{abstract}

\begin{section}{Introduction}
Throughout, we let $R$ be an o-minimal expansion of a group in a language $\mathcal{L}_0$ in which $\Th (R)$ eliminates quantifiers.   We expand our language with a unary predicate $C$ used to define a valuational cut in $R$. Such a structure, $(R,C)$, is an instance of a valuational weakly o-minimal structure.  Weakly o-minimal structures were first introduced by Dickmann \cite{d}.  In \cite{womin}, Macpherson, Marker, Steinhorn  proved a basic dichotomy: Every weakly o-minimal structure is either valuational or non-valuational.  While the non-valuational ones are rather well-understood (see for example Bar-Yehuda, Hasson, Peterzil \cite{kwomin}), and have been shown to share many desirable properties with o-minimal structures, valuational weakly o-minimal structures have been, at least at a certain level of generality, less well-explored.\footnote{In \cite{cc}, Laskowski, Shaw claim to prove the existence of definable Skolem functions for structures $(R,C)$, but there is a gap in the proof.}

The main result of this paper says that if $C$ is a cut in a model $R$, and $C$ has property $\ast$ (see Definition \ref{star}), then $Th(R,C)$ eliminates quantifiers in $\mathcal{L}:=\mathcal{L}_0 \cup \{C\}$ expanded by a small number of constants.  We note that $C$ has property $\ast$ for example when $R$ is an o-minimal expansion of a field and $C$ is a convex subring of $R$ such that its residue field with structure induced from $R$ (equivalently, from $(R,C)$) is o-minimal (see Appendix A). However, it is not true, in the case when $C$ is a convex subring of $R$, that $C$ having property $\ast$ would be equivalent to the residue field of $(R,C)$ being o-minimal (see Example \ref{nonomin}).  Neither is it the case that $C$ has property $\ast$ whenever $C$ is a convex subring of $R$ (see Example \ref{nonsymforvalrings}).

Here is some context. Originally, our main interest was in the case when $R$ expands a field and $C$ is a convex subring.  A particularly well-behaved class of such structures are the $T$-convex structures, as introduced by van den Dries and Lewenberg in \cite{tconv}.  But even when one relaxes the T-convexity condition to the strictly weaker condition of having an o-minimal residue field, one maintains some desirable properties such as first-order axiomatizability (see Ma\v{r}\'{i}kov\'{a} \cite{ax}), and model-completeness of $(R,C)$ in an expansion of $\mathcal{L}$ by a small number of constants (see Ealy, 
Ma\v{r}\'{i}kov\'{a} \cite{mc}).  Later, the second author noticed (in unpublished notes) that the proof in \cite{mc} can rather easily be extended to yield quantifier elimination.  The question remained whether the assumption of o-minimality of the residue field is necessary.  We show here that it can be replaced by the more general condition $\ast$.

In \cite{mc}, to prove model completeness, one first establishes Theorem 3.3, p. 244, a criterion for elementary extensions of $(R,C)$.  While \cite{mc} assumes that $R$ expands a field and $C$ is a convex subring, in this paper we relax these conditions to $R$ expands a group and $C$ defines a valuational cut.  The remainder of the proof in \cite{mc} relies heavily on the assumption of o-minimality of the residue field and is here replaced by an entirely different approach. We also provide a much simplified proof of (an analog of) Theorem 3.3, p.244 \cite{mc}, and correct some errors from \cite{mc}.  

In particular, Fact 1.6, p.236 \cite{mc} is misstated: it holds for cuts, but not for non-cuts; in \cite{mc}, we only apply it to cuts.  More seriously, we claim to have proven that what we call property $\ast$ in this paper always holds (Theorem 2.7, p.239, \cite{mc}).  It does hold where we use it to establish model completeness (see Appendix), but it is not always true, as the examples in the next section show.  In total, Corollary 2.9, Lemma 3.1, Lemma 3.2, and Theorem 3.3 of \cite{mc} require property $\ast$, and Corollary 3.4 requires the hypothesis of either property $\ast$ or power-boundedness (see Appendix).

\bigskip\noindent
Our notation and set-up are as follows. We let $\mathbb{N}=\{ 0,1,\dots \}$. The variables $i,j,k,l,m,n, \dots$ range over $\mathbb{N}$, and, more generally, $\alpha, \beta, \gamma , \dots $ denote ordinals. 

Definable shall mean definable with parameters, unless indicated otherwise. If $M$ is a structure, then sometimes we write {\em $M$-definable\/} to mean definable in the structure $M$ with parameters from $M$.  Let $M$ be an o-minimal structure. If $M \preceq \mathcal{M}$ and $a\in \mathcal{M} \setminus M$, then we denote by $M\langle a \rangle$ the structure generated by $a$ in $\mathcal{M}$ over $M$ (note that then $M \preceq M
\langle a \rangle \preceq \mathcal{M}$).

We follow Marker, Steinhorn \cite{ms} (and many others) in saying that a type $q\in S_1 (M)$ is a {\em cut\/} if there are nonempty disjoint subsets $C^- $ and $C^+$ of $M$ such that $C^- <C^+$, $C^- \cup C^+ = M$, $C^-$ has no supremum in $R$, and $q$ consists of the formulas $c<x$ for all $c\in C^-$ and $x<c$ for all $c\in C^+$.  (Thus, cuts are precisely the nondefinable 1-types.)   We call a cut in $M$
{\em valuational\/} if there is $\epsilon \in M^{>0}$ such that $C^-$ is closed under addition by $\epsilon$.

As stated above, we let $R$ be an o-minimal expansion of a group in a language $\mathcal{L}_0$.  We assume that $\mathcal{L}_0$ is such that $T_0 =\Th (R)$ eliminates quantifiers in $\mathcal{L}_0$, and that $\mathcal{L}_0$ and $T_0$ have been expanded by definitions as follows: for each $\mathcal{L}_0$-formula $\phi(x_1 , \dots , x_n ,y)$ such that $$T_0 \vdash \forall x_1 \dots \forall x_n \; \exists ! y\; \phi (x_1 , \dots ,x_n , y)$$ we add a new function symbol $f$ to $\mathcal{L}_0$ and the axiom $$\phi (x_1 , \dots , x_n , f(x_1 , \dots , x_n ))$$ to the theory $T_0$.  Then $T_0$ is universally axiomatizable.

By $C$ we shall denote a unary predicate for a convex, downward closed set such that 
the type $$\{x>r\colon r\in C\}\cup \{x<r\colon r>C\}$$ is a cut. 
 We write $C(M)$ for the realization of $C$ in a structure $M$. We consider $(R,C)$ in the language $\mathcal{L}= \mathcal{L}_0 \cup \{C\}$, and we use $T$ to denote the theory of $(R,C)$ (in the language $\mathcal{L}$).  
 We work inside a monster model $(\mathcal{R},C)$ of $T$.  In particular, if $M \preceq \mathcal{R}$, then in $(M,C)$ the predicate $C$ is realized by the set $C(\mathcal{R})\cap M$.

Finally, for $A\subseteq \mathcal{R}$, by $S_1(A)$ we mean the 1-types in the (o-minimal!) language $\mathcal{L}_0$ over $A$.

We will repeatedly use the following properties of nondividing which hold in all theories (see \cite{adler} Lemma 5.2 for a proof):

\begin{itemize}
 \item (monotonicity) If $A\dind_C B$, $A_0 \subseteq A$ and $B_0 \subseteq B$, then $A_0\dind_C B_0$.

\item (base monotonicity)  Suppose $D \subseteq C \subseteq B$. If $A \dind_D B$, then $A \dind_C B$.

\item (left transitivity) Suppose $D \subseteq C \subseteq B$. If $B \dind_C A$ and $C \dind_D A$, then $B \dind_D A$.
\end{itemize}

\noindent Furthermore, we will use that forking and dividing are the same in o-minimal theories. (See the remarks preceding Proposition 2.8 together with Corollary 5.6 in \cite{CS} for a proof that forking is the same as dividing in weakly o-minimal theories).  This allows us to also use the following:
\begin{itemize}
    \item (extension) If $A\ind_C B$ and $\hat B\supseteq B$, then there is $A'\equiv_{BC} A$ such that $A'\ind_C \hat B$. 
\end{itemize}

\noindent Finally, we note a useful, but sometimes overlooked, observation about o-minimal structures.

\begin{fact}\label{decreasing}
Let $p\in S_1(R)$ be non-isolated.  Then there is no $\mathcal{L}_0$-definable over $R$ decreasing function mapping the set of realizations of $p$ in $\mathcal{R}$ to itself.
\end{fact}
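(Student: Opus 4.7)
The plan is to assume for contradiction that such an $f$ exists, apply o-minimal cell decomposition to locate a single $R$-definable open interval on which $f$ is strictly decreasing and which contains every realization of $p$, and then derive a direct contradiction from whichever of the formulas $x<f(x)$, $x=f(x)$, $x>f(x)$ belongs to the complete type $p$.

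For the first step, since $f$ is $\mathcal{L}_0$-definable over $R$ and $R$ is o-minimal, cell decomposition partitions $R$ into finitely many $R$-definable cells on which $f$ is continuous and either constant or strictly monotone. The set of realizations of $p$ in $\mathcal{R}$ is a convex subset of $\mathcal{R}$, and it is infinite because $p$ is non-isolated; hence these realizations all lie in the extension to $\mathcal{R}$ of a single cell $I$, and $I$ must be an open interval. On $I$ the function $f$ cannot be constant, since that would send every realization of $p$ to a single element of $R$, which realizes an isolated type; and $f$ is assumed to be decreasing on $p$-realizations. So $f$ is strictly decreasing on $I$, and by elementarity on the extension of $I$ to $\mathcal{R}$ as well.

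Next, exactly one of $x<f(x)$, $x=f(x)$, $x>f(x)$ lies in $p$. The middle case is ruled out: strict monotonicity of $f$ on $I$, together with the intermediate value property available in o-minimal structures, forces $\{x\in I : f(x)=x\}$ to have at most one element, which would then be $R$-definable and hence lie in $R$; but no realization of the non-isolated type $p$ lies in $R$. Assume therefore $x<f(x)\in p$; the case $x>f(x)\in p$ is symmetric. Pick $a\models p$. Then $f(a)\models p$ as well, both $a$ and $f(a)$ lie in the extension of $I$ to $\mathcal{R}$, and $a<f(a)$. Strict decrease of $f$ on this extension gives $f(a)>f(f(a))$, whereas substituting $f(a)$ into the formula $x<f(x)\in p$ yields $f(a)<f(f(a))$, a contradiction. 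The only mildly delicate point is the use of cell decomposition to produce a genuinely $R$-definable interval of strict monotonicity; everything after that is a two-line manipulation of the type, so no real obstacle is anticipated.
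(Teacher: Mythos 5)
Your proof is correct, but it takes a mildly different route from the paper's. The paper's argument is a one-liner: a strictly decreasing continuous $R$-definable function mapping the (convex) set of $p$-realizations to itself must, by the intermediate value property, have a fixed point $a$ in that set; $a$ is then $R$-definable (as the unique fixed point on the relevant interval), hence $a\in\dcl(R)=R$, contradicting $p$ being non-isolated. You instead avoid invoking the existence of a fixed point: you observe that exactly one of $x<f(x)$, $x=f(x)$, $x>f(x)$ lies in the complete type $p$, rule out $x=f(x)$ via strict monotonicity plus definable closure, and dispose of $x<f(x)$ (and symmetrically $x>f(x)$) by substituting $f(a)\models p$ into the formula to get $f(a)<f(f(a))$, which contradicts $f$ decreasing and $a<f(a)$. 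Both arguments bottom out in the same place — a non-isolated type has no realization in $\dcl(R)$ — but yours trades the IVT-plus-convexity step for a two-line type-theoretic manipulation; the paper's is more compressed but leaves more implicit (why the fixed point exists and why it realizes $p$). Your preliminary cell-decomposition step to locate a definable interval of strict decrease is fine, though a little more elaborate than strictly necessary since the hypothesis already gives strict decrease on the convex set of $p$-realizations, and monotonicity on a definable neighborhood is immediate from the Monotonicity Theorem.
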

\begin{proof}
A decreasing function with the above property would have a fixed point $a$ realizing $p$.  But then $a \in \dcl(R)$, a contradiction with $p$ including $\{x\neq r| r\in R\}$.
\end{proof}

\begin{ack} 
This work was supported by a grant from the Simons Foundation (318364, J.M.).
The second author would like to thank the program committee of the ASL 2019 Winter Meeting for motivating her to settle the questions answered in this paper. 
Both authors thank the members of the Kurt G\"odel Research Center for their hospitality during a visit in 2019/20.
\end{ack}

\end{section}

\begin{section}{The (non)-symmetry of forking in Morley sequences in invariant one-types in o-minimal structures}

In \cite{mc}, it is claimed that, in o-minimal expansions of groups, forking in Morley sequences in invariant one-types is symmetric (p. 239, Theorem 2.7).  This result is used in the proof of Theorem 3.3, p. 244, \cite{mc}, whose analog we wish to establish here.
We refer the reader to \cite{mc} for the necessary definitions.

Unfortunately, the proof of Theorem 2.7, \cite{mc}, has a gap.  In fact, the theorem is false in the stated generality.  In the current paper, cuts for which Theorem 2.7 of \cite{mc} holds are said to satisfy $\ast$  (see Definition \ref{star}).  Contra Theorem 2.7, these are not all cuts, as follows from Proposition \ref{counterexamplesym}.  Nor are they all valuational cuts (Example \ref{counterexamplesym_valuational}) or even all cuts given by valuation rings (Example \ref{nonsymforvalrings}). 
While this does not pose a problem for the model completeness result in \cite{mc} (Theorem 2.7 does hold in the context of \cite{mc}, namely, when $V$ is a convex subring of an o-minimal field $R$ such that the corresponding residue field is o-minimal -- see Appendix), here we need to bridge the gap in the proof of Theorem 3.3. We do this by additionally assuming that the cut $C$ has property $\ast$.

\begin{definition}
  Given an $A$-invariant type $p$ over $\mathcal R$, and $B\supseteq A$,  a { 
\em Morley sequence in $p$ over $B$\/} is any sequence $t_1 , t_2 , \dots $ in $\mathcal{R}$ 
constructed as follows: let $t_1\models p|_B$ and having defined $t_1,\dots,t_n$, let $t_{n+1}\models p|_{Bt_1 \dots t_n }$.  
\end{definition}

\noindent Note that in a Morley sequence over $B$, $t_i \ind_B t_1\dots t_{i-1}$ and moreover the sequence is indiscernible over B.

\begin{proposition}\label{equivalent to star}
 Let $p \in S_1 (\mathcal{
\R})$ be the global type $C(R)<x$ and $x<\mathcal{R}^{>C(R)}$, and let $t_1 , \dots ,t_n$ form a finite Morley sequence in $p$ over $R$. Let $q \in S_1 (\mathcal{
\R})$ be the global type implied by $r<x$ for all $r\in \mathcal{R}$ with $r<R^{>C(
R)}$, and  $x<R^{>C(R)}$ (i.e. the $R$-invariant type at the opposite side of the cut from $p$).  Then the following are equivalent: 
 \begin{enumerate}
     \item There is no function $f$,  $\mathcal{L}_0$-definable over $Rt_2 t_3 \dots t_{n-1}$, which maps a cofinal sequence in $C(R)$ onto a coinitial sequence in the convex hull of $R^{>C(R)}$ in $\mathcal{R}$. 
     \item $t_1\ind_R t_2 \dots t_n$.
     \item $t_n, \dots, t_1$ is a finite Morley sequence in $q\in S_1(\mathcal{R})$ over $R$.
 \end{enumerate}
\end{proposition}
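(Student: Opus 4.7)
My plan is to prove the cycle $(3)\Rightarrow (2)\Leftrightarrow (1)\Rightarrow (3)$, all linking to a single reformulation. The starting observation is that, by cell decomposition, every $R$-definable subset of $\mathcal{R}$ is a finite union of $R$-intervals, and any $R$-interval either contains the entire gap $\{x\in\mathcal{R}\colon C(R)<x<R^{>C(R)}\}$ or is disjoint from it. Hence all elements of the gap share the same $\mathcal{L}_0$-type over $R$, so the gap is a single $R$-orbit and the only $R$-invariant global $\mathcal{L}_0$-extensions of the cut $p|_R=q|_R$ are $p$ and $q$ themselves. Since $T_0$ is NIP, $t_1\ind_R t_2\dots t_n$ is equivalent to $\tp(t_1/Rt_2\dots t_n)$ being the restriction of a global $R$-invariant type; the Morley ordering $t_1>t_j$ for $j\geq 2$ excludes $p|_{Rt_2\dots t_n}$, so (2) is equivalent to $t_1\models q|_{Rt_2\dots t_n}$, i.e., no $y\in\dcl(Rt_2\dots t_n)$ satisfies $t_1<y<R^{>C(R)}$.

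From here, $(3)\Rightarrow (2)$ is immediate, being the last step of (3). For $(2)\Rightarrow (3)$, I apply the equivalence above to each tail $t_k,\dots,t_n$ (itself a Morley sequence in $p$ over $R$), noting that by indiscernibility of the original sequence together with monotonicity of nonforking, $t_1\ind_R t_2\dots t_n$ entails $t_k\ind_R t_{k+1}\dots t_n$, whence $t_k\models q|_{Rt_{k+1}\dots t_n}$.

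It remains to prove $(1)\Leftrightarrow (2)$, translating between pointwise witnesses $y=g(t_n)$ of the negation of (2) and crossing functions $f$ of the negation of (1), both $\mathcal{L}_0$-definable over $Rt_2\dots t_{n-1}$. Given $f$, I extend it by cell decomposition to a continuous monotone $\tilde f$ on a $\dcl(Rt_2\dots t_{n-1})$-definable interval containing the whole gap; critically, the cell endpoints cannot lie in the gap, since any $\dcl(Rt_2\dots t_{n-1})$-element in the gap would have to be strictly above $t_n$ by the Morley property $t_n\models p|_{Rt_2\dots t_{n-1}}$, contradicting that the left endpoint is $\leq t_n$. The cofinal-to-coinitial condition then forces $\tilde f$ to be decreasing with $\tilde f$ sending the bottom of the gap to the top; an $R$-type argument (ruling out $u\in R$ since $t_1\notin\dcl(Rt_2\dots t_{n-1})$, and ruling out $u$ being infinitesimally close to an $R$-element by continuity of $\tilde f$) places $u:=\tilde f^{-1}(t_1)$ inside the gap, so the Morley property of $t_n$ over $Rt_1\dots t_{n-1}$ gives $t_n<u$, whence $y:=\tilde f(t_n)>t_1$ lies in the gap and serves as the required witness of the negation of (2). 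The reverse direction uses the pointwise witness $y=g(t_n)$ together with the position of $t_n$ at the bottom of the gap to show that $g$ must be continuous monotone across the whole cut, and that its behavior on $C(R)$ near the cut yields the required cofinal-to-coinitial crossing.

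The main obstacle is precisely this last equivalence: the geometry of $\tilde f$ (or $g$) across the cut must be carefully extracted from a single pointwise inequality. The delicate points are the cell decomposition analysis near the gap (using the Morley properties of the $t_i$ to forbid cell boundaries inside the gap) and the $R$-type analysis locating preimages like $\tilde f^{-1}(t_1)$ inside the gap rather than infinitesimally close to $R$-elements.
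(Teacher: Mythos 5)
Your reformulation of~(2) via invariant types, and your derivation of $(2)\Rightarrow(3)$ from it using tails of the Morley sequence plus indiscernibility, is a genuinely different and cleaner route than the paper's (which instead proves $(1)\Rightarrow(3)$ and uses that to close the cycle). The direction $(3)\Rightarrow(2)$ is the same as the paper's, and your $(2)\Rightarrow(1)$ (contrapositive) is also essentially the paper's one-line argument, though you should notice that $u:=\tilde f^{-1}(t_1)$ being strictly above the cut of $R^{>C(R)}$ is not ruled out by your ``infinitesimally close to an $R$-element'' consideration; it happens to be harmless, since then $t_n<u$ holds trivially and $\tilde f(t_n)>t_1$ follows anyway, but this case needs to be noticed and handled.

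The genuine gap is in $(1)\Rightarrow(2)$, i.e.\ in the contrapositive $\neg(2)\Rightarrow\neg(1)$. Given $y=g_{\underline t}(t_n)>t_1$ lying in the gap, you assert that ``its behavior on $C(R)$ near the cut yields the required cofinal-to-coinitial crossing,'' but this is precisely the point that requires an argument, and the sketch contains nothing that forces $g_{\underline t}(C(R)\cap I)$ to land \emph{above} the gap rather than inside it. Indeed, if some $g_{\underline t}(c)$ with $c\in C(R)$ lay in the gap, it would lie in $\dcl(R\underline t)$ and above $t_1$, which witnesses $t_1\nind_R\underline t$, a statement about a shorter Morley sequence. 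The paper resolves this by a minimality-of-$n$ (equivalently, inductive) argument: assume WLOG that $n$ is least with $t_1\not\models q|_{R\underline t t_n}$, so that $t_1\ind_R\underline t$ and hence $t_1\models q|_{R\underline t}$, which rules out $g_{\underline t}(c)$ landing in the gap above $t_1$. This induction on $n$ is the technical core of the hard direction and is entirely absent from your sketch; without it you cannot even establish that $g_{\underline t}$ is decreasing near the cut, let alone that the image is coinitial in the convex hull of $R^{>C(R)}$. A secondary, smaller slip: your claim that the cell endpoints cannot lie in the gap is only valid for the left endpoint (bounded above by $t_n$); the right endpoint of the monotonicity interval lies in $\dcl(R\underline t)$ but \emph{above} $t_n$, and the Morley property of $t_n$ does not prevent it from lying in the gap. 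This does not break the argument (you only need the interval to contain $t_n$ together with a cofinal segment of $C(R)$), but your statement that the interval ``contains the whole gap'' is not justified and not needed.
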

\begin{proof}
First note that for $n=2$, (1) is always true by Fact \ref{decreasing}, as an $\mathcal{L}_0$-definable over $R$ function mapping  a cofinal segment of $C(R)$ to a coinitial segment of the convex hull of $R^{>C(R)}$ in $\mathcal{R}$ would be a decreasing $\mathcal{L}_0$-definable over $R$ function mapping a complete type over $R$ to itself.  Also, it is always the case that $t_1\ind_R t_2$ and $t_1\models q|_{Rt_2}$, since if either of these failed to be true, there would be some $\mathcal{L}_0$-definable over $R$ function $f$ with $f(t_2)>t_1$.  As already noted, such a function cannot be decreasing.  But nor can it be increasing, as $f^{-1}(t_1)$ would be less than $t_2$, contradicting the choice of $t_2$.

So we will assume $n>2$, and set $\underline{t}=t_2,\dots, t_{n-1}$.  

 \begin{trivlist}
 
 \item[$3. \implies 2.$] Clearly, $t_1\models q|_{R\underline{t}t_n}$ implies $t_1 \ind_R \underline{t}t_n$.
 
  \item[$2.\implies 1.$] Assume that $f_{\underline{t}}$ maps a cofinal sequence in $C(R)$ to a coinitial sequence of the convex hull of $R^{>C(R)}$ in $\mathcal{R}$.  Then $t_1<f_{\underline{t}}(t_n)<R^{>C(R)}$, and thus $t_1$ is contained in the interval $(t_2, f_{\underline{t}}(t_n))$.  This witnesses $t_1\nind_R \underline{t}t_n$.

  \item[$1. \implies 3.$] Assume that $t_1\not\models q|_{R\underline{t}t_n}$, and, without loss of generality, assume that $n$ is the least such.  Then there is an element  $s\in R\langle\underline{t}t_n\rangle$ realizing $q|_R$ with $t_1<s$.  Write $s=f_{\underline{t}}(t_n)$ for some  $\mathcal{L}_0$-definable over $R$ function $f$.  We claim that $f_{\underline{t}}$ maps a cofinal sequence in $C(R)$ onto a coinitial sequence in in the convex hull of $R^{>C(R)}$ in $\mathcal{R}$.
  
  We first show that $f_{\underline{t}}$ is strictly decreasing on some interval $I$ containing a cofinal sequence in $C(R)$ and that $f_{\underline{t}}(I\cap C(R))$ is contained in the convex hull of $R^{>C(R)}$ in $\mathcal{R}$:
  By o-minimality of $R\langle \underline{t} \rangle$, $f_{\underline{t}}$ is continuous and strictly monotone on some interval $I$ with endpoints in $\dcl{(R \underline{t})}$ and containing $t_n$.  Since $\underline{t} , t_n$ is a Morley sequence in $p$ over $R$, the left endpoint of $I$ is contained in the convex hull of $C(R)$ in $R\langle \underline{t}\rangle$. 
  For sufficiently big $c\in C^{-}$ we have $t_1< f_{\underline{t}}(c)$, since $t_n \in \{x\colon f_{\underline{t}}(x)>t_1  \}$, and $\{x\colon f_{\underline{t}}(x)>t_1  \}$ is $\mathcal{L}_0$-definable over $R\langle t_1 \underline{t}\rangle$.  Moreover, $f_{\underline{t}}(c)<R^{>C(R)}$ would yield a contradiction with the minimality of $n$.  So we have $f_{\underline{t}}(t_n )<f_{\underline{t}}(c)$, hence $f_{\underline{t}}$ is decreasing on $I$, and $f_{\underline{t}}(I \cap C(R))$ is contained in the convex hull of $R^{>C(R)}$ in $\mathcal{R}$.

  It remains to show that $f_{\underline{t}}(C(R) \cap I)$ is coinitial in the convex hull of $R^{>C(R)}$ in $\mathcal{R}$.  Suppose not.  Then there is $c\in R^{>C(R)}$ with $c<f_{\underline{t}}(C(R) \cap I)$.  Then $f_{\underline{t}}^{-1} (c)< t_n$ but realizes $p|_R$.  This contradicts the definition of $t_n$.
  
 \end{trivlist}
\end{proof}

\begin{definition}\label{star} Let $p \in S_1 (\mathcal{
\R})$ be the global type $C(R)<x$ and $x<\mathcal{R}^{>C(R)}$. We say that the cut defined by $C$ in $R$ has property $\ast$ (or (R,C) has property $\ast$) if whenever $\underline{t}=t_1 , \dots ,t_n$ form a finite Morley sequence in $p|_{R}$, then the equivalent conditions of Proposition \ref{equivalent to star} hold.
\end{definition}

\noindent We use $C$ both for a predicate in the expanded language $\mathcal{L}$ and to denote a cut, i.e. a non-definable 1-type in the the o-minimal language $\mathcal{L}_0$.  Property $\ast$ is better thought of as a property of an o-minimal 1-type rather than as a property of the predicate $C$.

\begin{proposition}\label{sym} Let $p\in S_1 (\R)$ be a global $R$-invariant type such that $p|_R$ is the type corresponding to $C(R)$ and assume further that $(R,C)$ has property $\ast$. Let $t_1 , \dots ,t_n$ be a finite Morley sequence in $p$ over $R$.  Then 
$$t_1 \dots t_{k} \ind_R t_{k+1} \dots t_n \mbox{ and } t_{k+1} \dots t_n \ind_R t_1 \dots t_k .$$
for all $k$ with $1\leq k<n$.
\end{proposition}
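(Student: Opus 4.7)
My plan is to prove the two independence statements separately by induction, using Proposition \ref{equivalent to star} as the main tool. Property $\ast$ guarantees that for any finite Morley sequence $s_1,\ldots,s_m$ in $p$ over $R$, condition (2) holds, i.e.\ $s_1 \ind_R s_2\ldots s_m$. The rest is bookkeeping with the standard properties of forking/dividing listed in the introduction.

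For the right-to-left direction $t_{k+1}\ldots t_n \ind_R t_1\ldots t_k$, I would argue by downward induction on $k$; this direction does not use property $\ast$. The base case $k=n-1$ is $t_n \ind_R t_1\ldots t_{n-1}$, immediate from the definition of Morley sequence. For the step, assume $t_{k+2}\ldots t_n \ind_R t_1\ldots t_{k+1}$. Base monotonicity (raising the base from $R$ to $R t_{k+1}$) together with right monotonicity yields $t_{k+2}\ldots t_n \ind_{R t_{k+1}} t_1\ldots t_k$, and combining this with $t_{k+1} \ind_R t_1\ldots t_k$ (from the Morley definition) via left transitivity applied to the chain $R \subseteq R t_{k+1} \subseteq R t_{k+1}\ldots t_n$ gives $t_{k+1}\ldots t_n \ind_R t_1\ldots t_k$.

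For the left-to-right direction $t_1\ldots t_k \ind_R t_{k+1}\ldots t_n$, I would induct on $k$, and this is where property $\ast$ really enters. The base case $k=1$ is exactly condition (2) of Proposition \ref{equivalent to star} applied to the full Morley sequence. For the step, assume $t_1\ldots t_{k-1}\ind_R t_k\ldots t_n$. The key observation is that the tail $t_k, t_{k+1},\ldots, t_n$ is itself a Morley sequence in $p$ over $R$: since $p$ is $R$-invariant and $t_{k+i}\models p|_{R t_1\ldots t_{k+i-1}}$, the restriction gives $t_{k+i}\models p|_{R t_k\ldots t_{k+i-1}}$. Applying property $\ast$ to this sub-sequence yields $t_k \ind_R t_{k+1}\ldots t_n$. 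On the other hand, from the inductive hypothesis, base monotonicity and right monotonicity give $t_1\ldots t_{k-1}\ind_{R t_k} t_{k+1}\ldots t_n$, and hence trivially $t_1\ldots t_k \ind_{R t_k} t_{k+1}\ldots t_n$. A final application of left transitivity along the chain $R\subseteq R t_k \subseteq R t_1\ldots t_k$ produces $t_1\ldots t_k \ind_R t_{k+1}\ldots t_n$.

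The main conceptual point, and what I expect to be the only real obstacle, is recognizing that property $\ast$ must be applied not merely to the full Morley sequence but to each suffix $t_k,\ldots,t_n$; once one sees that these suffixes are themselves Morley sequences in $p$ over $R$, the remainder is a routine combination of base monotonicity, right monotonicity, and left transitivity.
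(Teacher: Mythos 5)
Your proof is correct and takes essentially the same route as the paper's one-line hint: the paper's proof simply says to use condition (2) of Proposition \ref{equivalent to star} together with the two forking facts $a\ind_c b_1 b_2 \implies a\ind_{cb_1} b_2$ and $(a\ind_{cb_1} b_2 \wedge b_1\ind_c b_2) \implies ab_1\ind_c b_2$, which is exactly what your two inductions flesh out. Your explicit observation that each suffix $t_k,\dots,t_n$ is itself a Morley sequence over $R$ is the (unstated) step the paper is also relying on, and your remark that the $t_{k+1}\dots t_n\ind_R t_1\dots t_k$ direction does not require property~$\ast$ is a small but accurate refinement.
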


\begin{proof}
Use 2. of Proposition \ref{equivalent to star} and the fact that for any $a,b_1, b_2, c$, one has $a\ind_c b_1 b_2 \implies a\ind_{cb_1} b_2$, and $a\ind_{cb_1} b_2 \textrm{ and } b_1 \ind_c b_2 \implies ab_1\ind_c b_2$.   
\end{proof}

\begin{proposition}\label{counterexamplesym} 
$(R,C)$ has property $\ast$ implies that $C$ defines a valuational cut in R.
\end{proposition}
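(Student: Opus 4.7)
I will prove the contrapositive: if $C$ defines a non-valuational cut, then $(R,C)$ does not have property $\ast$. The idea is to exhibit a length-3 Morley sequence and an explicit $\mathcal{L}_0$-definable function violating condition (1) of Proposition \ref{equivalent to star}.

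Choose any Morley sequence $t_1, t_2, t_3$ in $p|_R$ (which exists in the monster $\mathcal{R}$ by saturation); with $n = 3$ we have $\underline{t} = t_2$. I will use $f(x) = 2t_2 - x$, the reflection across $t_2$, which is $\mathcal{L}_0$-definable over $Rt_2$ via the group operation, and verify that $f$ sends every cofinal sequence $(c_n)$ in $C(R)$ to a sequence in $\text{ch}(R^{>C(R)})$ that is coinitial there.

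For $f(c) \in \text{ch}(R^{>C(R)})$ (for every $c \in C^-$): since $C$ is a cut, $C^-$ has no maximum, so pick $c' \in C^-$ with $c' > c$; applying non-valuationality to $\epsilon = c' - c > 0$ produces $c^* \in C^-$ with $r_0 := c^* + \epsilon = c^* + c' - c \in R^{>C(R)}$, and from $c^*, c' < t_2$ we get $c^* + c' < 2t_2$, hence $r_0 \leq 2t_2 - c = f(c)$. For coinitiality: since $R^{>C(R)}$ is coinitial in $\text{ch}(R^{>C(R)})$, it suffices to show that for each $r \in R^{>C(R)}$, $f(c_n) \leq r$ holds eventually. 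The key subsidiary claim is that $r - t_2 \geq \epsilon_r$ for some $\epsilon_r > 0$ in $R$; otherwise every $r' < r$ in $R$ would satisfy $r' < t_2$ and hence $r' \in C^-$, forcing $r$ to be the minimum of $R^{>C(R)}$ and contradicting the cut property. Given such $\epsilon_r$, non-valuationality furnishes $c^{**} \in C^-$ with $t_2 - c^{**} < \epsilon_r \leq r - t_2$; then every $c \in C^-$ with $c \geq c^{**}$ (a cofinal set) satisfies $c \geq 2t_2 - r$, i.e., $f(c) \leq r$.

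The main obstacle is the subclaim that $r - t_2$ is bounded below by a positive element of $R$: it requires more than $t_2 < r$ and leverages the cut property (absence of a minimum in $R^{>C(R)}$) to prevent $r \in R$ from being infinitesimally close to $t_2$ relative to $R$. Once this is established, the rest of the argument is a direct chain of manipulations driven by non-valuationality.
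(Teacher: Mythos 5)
Your proof is correct. It is closely related to the paper's but targets a different clause of Proposition~\ref{equivalent to star}: you exhibit an explicit $\mathcal{L}_0$-definable over $Rt_2$ function falsifying condition~(1), whereas the paper directly falsifies condition~(2) by observing that $t_1$ lies in the bounded $Rt_2t_3$-definable interval $(t_2,\, 2t_2 - t_3)$, which lies entirely inside the realizations of $p|_R$ and hence witnesses $t_1 \nind_R t_2 t_3$. The two routes hinge on the same algebraic kernel — both crucially use that non-valuationality makes the differences $t_i - t_{i+1}$ infinitesimal relative to $R$, and your reflection $f(x)=2t_2-x$ evaluated at $t_3$ is precisely the paper's right endpoint $2t_2-t_3$ — but the paper's route is substantially shorter because it avoids the bookkeeping you need (the subsidiary claim $r-t_2 \geq \epsilon_r$ with $\epsilon_r\in R^{>0}$, and the construction of $r_0$ and $c^{**}$ via repeated appeals to non-valuationality) to check both that the image lands in the convex hull of $R^{>C(R)}$ and that it is coinitial there. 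One small caveat worth noting for an eventual write-up: the negation of the subsidiary claim should also rule out $r - t_2 \leq 0$, but that is immediate since $t_2 < R^{>C(R)} \ni r$; and all your manipulations use only the group operations, so the argument correctly applies in the general ordered-group setting of the paper.
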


\begin{proof}
Consider $(R,C)$ with $C$ non-valuational.  We denote by $p\in S_1(\mathcal{R})$ the type implied by $C(R)<x$ and $x<\mathcal{R}^{>C(R)}$. Let $t_1 , t_2 , t_3$ be a finite Morley sequence in $p$ over $R$. In particular, $t_3 < t_2 < t_1$. 

Since $C$ is non-valuational, $0<t_1-t_2<R^{>0}$, so $t_2-(t_1-t_2) \models p|_R$ and thus $t_3<t_2-(t_1-t_2)$.  That is, $t_1 <t_2 + (t_2 - t_3)$.  Likewise, $0<t_2-t_3<R^{>0}$, so $t_2+(t_2-t_3)\models p|_R$. Hence the formula $t_2< x < t_2+(t_2-t_3)$  witnesses $t_1 \nind_R t_2t_3$.
\end{proof}

The remainder of this section is devoted to the construction of several examples.  First an example of a structure $(R,C)$ where the cut defined by $C$ does not have property $\ast$.

\begin{example}\label{counterexamplesym_valuational}
Let $R$ be the field of the real algebraic numbers and let $R'=R\langle\epsilon\rangle$, the structure generated over the real algebraic numbers by a positive infinitesimal $\epsilon$.   Let $C$ be the cut of $\pi$. The cut of $\pi$ is a non-valuational cut in $R$ but a valuational cut in $R'$.  However, the same argument as in Proposition \ref{counterexamplesym} shows $t_1\nind_{R'}t_2t_3$.
\end{example}
 By Theorem 6.3, p. 5470 \cite{womin}, there is a canonical way in which $(R,C)$ defines a valuation ring.  In Example \ref{counterexamplesym_valuational}, this valuation ring is the convex hull of $\mathbb{Q}$ in $R'$, so its residue field is o-minimal by Corollary 2.4, p. 244 \cite{mc}.  It is therefore not possible to weaken the assumption in Proposition \ref{Sigma implies star}, which says that if $C$ is a cut corresponding to a convex subring of $R$ such that the associated residue field is o-minimal, then $C$ has property $\ast$, 
to: ``the residue field of the canonical valuation associated to $(R,C)$ is o-minimal".  

On the other hand, if $C=V$ is a convex subring of $R$, then the o-minimality of the residue field is not a necessary condition for
$C$ to have property  $\ast$, as the following example demonstrates.

\begin{example}\label{nonomin}
Let $R$ be a big elementary extension of the real exponential field, and let $a\in R^{>\mathbb{Q}}$.  We set $V=\bigcup_n (-a^n , a^n )$. The residue field of $(R,V)$ is not o-minimal, since the inverse function of the exponential induces a map $\ln \colon \bk^{>0} \to \bk$, where $\ln (\bk^{>0}) \subseteq \bk$ is bounded above but does not have a supremum in $\bk$.  However, letting $\mathcal{O}$ be the convex hull of $\mathbb{Q}$, we see that $(R, V)$ and $(R, \mathcal{O})$ are interdefinable via the $\mathcal{L}_0$-definable function $f(x)=\frac{\ln{x}}{\ln{a}}$.  The residue field of $(R, \mathcal{O})$ is $\mathbb{R}$ and is thus o-minimal. (In fact, $(R, \mathcal{O})$ is even $T$-convex.)  Thus, the cut corresponding to $\mathcal{O}$ has property $\ast$, and therefore so does the cut corresponding to  $V$.
 
\end{example}
While Example \ref{counterexamplesym_valuational} shows that $(R,C)$ might not satisfy $\ast$, one  might still hope that $\ast$ holds whenever $C$ is a predicate for a valuation ring, but this is also not true.  An example illustrating this is a bit more difficult to construct, and we shall devote the rest of the section doing so.

\begin{example}\label{nonsymforvalrings}
Set $R_0 = \mathbb{R}_{\exp}$, the real exponential field, and let $\mathcal{M}$ be a countably saturated elementary extension of $R_0$. We pick $a_1 \in \mathcal{M}^{>\mathcal{O}}$ and set $R_1 = R_0 \langle a_1 \rangle$ and $V_1 = \bigcup_n (-a_{1}^n , a_{1}^n ) \subseteq R_1$.  We define $(R_{k+1}, V_{k+1})$ inductively by letting $a_{k+1} \in \mathcal{M}$ be such that $V_k < a_{k+1} < R_{k}^{>V_k}$, $R_{k+1}=R_k \langle a_{k+1} \rangle$, and $V_{k+1} = \bigcup_n (-a_{k+1}^n , a_{k+1}^n ) \subseteq R_{k+1}$.  Finally we set $R= \bigcup_k R_k$ and $V=\bigcup_k V_k$.  Our aim is to show that $V$ does not have property $\ast$.

 \begin{lemma}\label{exlemma} Let $f(x,y)=y^{\frac{\ln y}{\ln x}}$, and 
  let $t_3 < t_2 < t_1$ be a finite Morley sequence in $p$ over $R$, where $p\in S_1 (\mathcal{R})$ is the type $V<x<\mathcal{R}^{>V}$.  Then 
  $$t_1 <f(t_3, t_2 )< R^{>V}.$$ 
 \end{lemma}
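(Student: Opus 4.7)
The plan is to reformulate the desired inequality $t_1 < f(t_3,t_2) = t_2^{\ln t_2/\ln t_3}$ in the equivalent form $t_3 < \exp\bigl((\ln t_2)^2/\ln t_1\bigr)$ and then apply the Morley condition $t_3 \models p|_{R t_1 t_2}$.  Setting $s := \exp\bigl((\ln t_2)^2/\ln t_1\bigr)$, which lies in $R\langle t_1,t_2\rangle$, it will suffice to show that $s > V(\mathcal{R})$: the Morley condition then gives $t_3 < s$ at once.

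Two structural facts about the present $V$ are crucial. First, $V$ is closed under $\ln$ on its positive elements $>1$: for $x \in V_k$ with $1 < x < a_k^n$ we have $\ln x < n\ln a_k < a_k$, so $\ln x \in V_k$, and this extends to $V(\mathcal{R})$. Consequently, for $z > 0$, the condition $\exp(z) > V(\mathcal{R})$ is equivalent to $z$ exceeding every element of $\ln V(\mathcal{R})^{>0}$. Second, the sequence $(\ln a_{k'})_{k'}$ is cofinal in $V(\mathcal{R})$ for generic choices of the $a_{k'}$'s (for example $\ln a_{k'} \approx a_{k'-1}/2$), since $\ln a_{k'}$ is then of the order of $a_{k'-1}$, which dominates any fixed bound coming from $V$.

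Granted these, $s > V(\mathcal{R})$ reduces to showing $(\ln t_2)^2 > v\cdot\ln t_1$ for every $v \in \ln V(\mathcal{R})^{>0}$. Fix such a $v$, write $v < n\ln a_j$ (with $n,j$ appropriate), and choose $t_1$ so that $\ln t_1 \in V(\mathcal{R})$ --- always possible by selecting $t_1$ just above $V(\mathcal{R})$ inside the image of $\exp$ on $V(\mathcal{R})$. Then $n\ln a_j \cdot \ln t_1 \in V(\mathcal{R})$, and since $V(\mathcal{R})$ is closed under positive square roots, $\sqrt{n\ln a_j \cdot \ln t_1} \in V(\mathcal{R})$. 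By cofinality there is $k'$ with $\ln a_{k'} > \sqrt{n\ln a_j \cdot \ln t_1}$, so that $\sqrt{n\ln a_j \cdot \ln t_1} \in \ln V_{k'} \subseteq \ln V(\mathcal{R})^{>0}$. Hence $w := \exp\bigl(\sqrt{n\ln a_j \cdot \ln t_1}\bigr)$ lies in $V(\mathcal{R}) \cap R\langle t_1\rangle$. The Morley condition $t_2 \models p|_{R t_1}$ now forces $t_2 > w$, that is $\ln t_2 > \sqrt{n\ln a_j \cdot \ln t_1}$, and squaring yields $(\ln t_2)^2 > n\ln a_j \cdot \ln t_1 > v\ln t_1$, as required.

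The bound $f(t_3,t_2) < R^{>V}$ holds vacuously: in this construction $R^{>V}$ is empty, because any $x \in R_k$ is eventually absorbed into some $V_{k'}$ (the $a_{k'+1}$'s and their powers dominate $x$ once $k'$ is sufficiently large), so no element of $R$ lies strictly above $V$. The principal technical obstacle is the cofinality estimate used to place $w$ inside $V(\mathcal{R})$; once that is pinned down, the rest is a clean two-step application of the Morley property (first to $t_2$ over $R t_1$, then to $t_3$ over $R t_1 t_2$).
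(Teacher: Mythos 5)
Your proof contains two genuine errors, the first of which is fatal.

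First, your treatment of the second inequality $f(t_3,t_2)<R^{>V}$ is based on the false claim that $R^{>V}=\emptyset$. In fact $R^{>V}$ is nonempty; this is essential to the whole example (otherwise $V$ would not be a proper convex subring of $R$). Concretely, $\exp(a_1)\in R_1\subseteq R$, and one checks inductively that $\exp(a_1)>V_k$ for every $k$: indeed $\exp(a_1)>a_k^n$ is equivalent to $a_1>n\ln a_k$, and $\ln a_k<a_1$ for all $k$ (see below), so $n\ln a_k<na_1<a_1^2$ while $a_1>a_1^2/a_1=a_1$ is not what we want --- rather, one uses that $\ln a_{k+1}\ll a_k$ to bootstrap; the paper verifies this. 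The direction of the construction is exactly the opposite of what you describe: the $a_{k'+1}$'s are chosen \emph{below} $R_{k'}^{>V_{k'}}$, filling the cut from underneath, so elements such as $\exp(a_1)$ never get absorbed into any $V_{k'}$. Since $R^{>V}\neq\emptyset$, the inequality $f(t_3,t_2)<R^{>V}$ is a real, non-vacuous claim, and your argument simply omits it. (It is also the claim the paper spends the most effort on: it requires showing that the sequence $\{a_{i+1}^{g_i}\}_i$, with $g_i=\ln a_{i+1}/\ln a_i$, is coinitial in $R^{>V}$ and then bounding $f(t_3,t_2)$ by each $a_{i+1}^{g_i}$.)

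Second, the cofinality claim is false. You assert that $(\ln a_{k'})_{k'}$ is cofinal in $V(\mathcal{R})$, justified by the ``generic'' choice $\ln a_{k'}\approx a_{k'-1}/2$. But this choice is impossible: $\exp(a_{k'-1}/3)$ already lies in $R_{k'-1}^{>V_{k'-1}}$ (since $a_{k'-1}/3>n\ln a_{k'-1}$ for all $n$), so the construction forces $a_{k'}<\exp(a_{k'-1}/3)$, i.e.\ $\ln a_{k'}<a_{k'-1}/3$, and repeating with $a_{k'-1}/m$, $\sqrt{a_{k'-1}}$, $(\ln a_{k'-1})^m$, etc.\ shows $\ln a_{k'}$ must be far smaller than $a_{k'-1}$. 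In fact $\ln a_{k'}<a_1$ for \emph{every} $k'$: $\exp(a_1)\in R_{k'-1}^{>V_{k'-1}}$ for all $k'$, hence $a_{k'}<\exp(a_1)$. So $(\ln a_{k'})_{k'}$ is bounded by $a_1\in V$, and is nowhere near cofinal. As it happens, the particular instance you actually use --- that some $\ln a_{k'}$ exceeds $\sqrt{n\ln a_j\cdot\ln t_1}$ --- does hold, but only because $\exp\bigl((\ln a_{k'})^2/\ln a_j\bigr)\in R^{>V}$ for $k'>j$, which forces $\ln t_1<(\ln a_{k'})^2/\ln a_j$; this is essentially the content of the paper's Claim 2 about the rapid decay of the $g_i$'s, and must be proved rather than attributed to a spurious cofinality. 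One further remark: you cannot ``choose $t_1$'' --- the Morley sequence is given --- though this particular point is harmless since $\ln t_1\in V(\mathcal{R})$ automatically (from $t_1<\exp(a_1)\in R^{>V}$).

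By contrast, the paper introduces $g_i=\ln a_{i+1}/\ln a_i$ and $h_i=\ln t_i/\ln t_{i+1}$, establishes $\mathcal{O}<g_i<R_i^{>\mathcal{O}}$ and the decay $g_{i+1}<g_i^{1/k}$, shows $\{a_{i+1}^{g_i}\}_i$ is coinitial in $R^{>V}$, and then proves the single Claim 3 inequality $t_1<t_2^{h_2}<a_{i+1}^{g_i}$, which delivers both halves of the lemma simultaneously. Your reformulation $t_3<\exp\bigl((\ln t_2)^2/\ln t_1\bigr)$ of the left inequality is a reasonable alternative starting point, but without the asymptotic control on the $g_i$'s that the paper develops, the reduction cannot be completed, and the right inequality is missing entirely.
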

\begin{proof}
 For each $i$, we let $g_i \in R_{i+1}$ be such that $a_{i}^{g_i }=a_{i+1}$, i.e. $g_i = \frac{\ln{a_{i+1}}}{\ln{a_{i}}}$. 
 
 \begin{claim}$1:$
 $\mathcal{O}<g_i <R_{i}^{>\mathcal{O}}$, where $\mathcal{O}$ is the convex hull of $\mathbb{Q}$ in $R_i$.
\end{claim}
 
\noindent
By our choice of $a_{i+1}$, $g_i>k$ for all $k\in\mathbb{N}$.  
 Moreover, if there was $r\in R_{i}^{>\mathcal{O}}$ with $r<g_i$, then $V_i < a_{i}^{r}<a_{i}^{g_i}=a_{i+1}$.  But since $a_{i}^{r} \in R_i$, this would imply that $a_{i}^{r} \in V_i$, a contradiction.

 \begin{claim}$2:$
 $g_{i+1}<g_{i}^{\frac{1}{k}}$ for all $k\in \mathbb{N}$. 
 \end{claim}
 
 \noindent
 By Claim 1, $g_{i+1}< g_i$.  For any $k$, $f(x)=x^k$ is increasing and $f(\mathcal{O})\subseteq \mathcal{O}$, so $g_{i+1}^k \geq g_i$ is impossible.

\medskip\noindent

Note that 
 $\{ a_{i+1}^{g_i}\}_{i=1}^{\infty}$ is a coinitial sequence in $R^{>V}$: If not, then there would be $r\in R^{>V}$ such that $r<a_{i+1}^{g_i}$ for all $i$.  But $r\in R_{j}^{>V_j}$ for some $j$, hence $a_{j+1}^{g_j}<r$, a contradiction.  So Claim 3 will complete the proof.

\begin{claim}$3:$ 
For $i\in \{ 1,2 \}$, we set $h_i := \frac{\ln{t_i}}{\ln{t_{i+1}}}$ (so $t_i = t_{i+1}^{h_i}$).
Then, for all $i$, $$t_1 < t_{2}^{h_2}<a_{i+1}^{g_i}=f(a_i , a_{i+1}).$$
\end{claim}

\noindent
To prove that $t_{2}^{h_2}<a_{i+1}^{g_{i}}$ for all $i$, assume towards a contradiction that $i$ is such that $t_{2}^{h_2}\geq a_{i+1}^{g_i}$.  Then $$h_2 \geq \frac{g_i \ln{a_{i+1}}}{\ln{t_2}}\geq \frac{g_i \ln{a_{i+1}}}{g_{i+1}\ln{a_{i+2}}}=\frac{g_i }{(g_{i+1})^2}.$$
By Claim 2, we have $\frac{g_i }{(g_{i+1})^2}>g_{i}^{\frac{1}{2}}$, so $h_2 > g_{i}^{\frac{1}{2}}$.  It follows that \[t_2 =t_{3}^{h_2}>a_{i+1}^{(g_{i}^{\frac{1}{2}})}\in R_{i+1}^{>V},\] a contradiction with $t_2$ realizing $p|_R$. 

It is left to show that $t_1 <t_{2}^{h_2}$.  If not, then $t_{2}^{h_1}>t_{2}^{h_2}$, so $h_1 > h_2$.  We would then be able to find $k$ such that 
$t_{2}^{\frac{1}{h_1}}<a_k$, which is equivalent to $t_{2}^{\frac{\ln{t_2}}{\ln{a_k}}}<t_1$.  On the other hand, $\frac{\ln{t_2}}{\ln{a_k}}> \frac{\ln{a_{k+1}}}{\ln{a_k }}=g_k$.  So \[t_{2}^{\frac{\ln{t_2}}{\ln{a_k}}}>t_{2}^{g_k}>a_{k+1}^{g_k}>V,\] a contradiction with $t_{2}^{\frac{\ln{t_2}}{\ln{a_k}}}<t_1$.
\end{proof}
\medskip\noindent
Lemma \ref{exlemma} shows that $t_1 \nind_R t_2 t_3$, and hence $V$ does not have property $\ast$.

\end{example}

\smallskip\noindent 
\end{section}

We conclude this section with a lemma showing that property $\ast$ is preserved by passing to certain superstructures.

\begin{lemma}\label{superstructures}
Suppose $C$ defines a cut in $R$ which has property $\ast$, and let $p\in S(\mathcal{R})$ be the invariant type implied by \[\{x>c |c\in C(R) \} \cup \{x<r| r\in 
\mathcal{R} \textrm{ and } r>C(R)\}.\] 
and $q\in S(\mathcal{R})$  the type implied by \[\{x<r |r\in R^{>C(R)} \} \cup \{x>r| r\in \mathcal{R} \textrm{ and } r<R^{>C(R)}\}.\]  
Then any of the following conditions implies $C$ defines a cut in $R\langle (a_i)_{i<\kappa}\rangle$ which has property $\ast$:
\begin{enumerate}
    \item $(a_i)_{i<\kappa}$ forms a Morley sequence in $p$ over $R$ and \[\dcl((a_i)_{i<\kappa}\cap p|_R)>C(\mathcal{R}).\]
    \item $(a_i)_{i<\kappa}$ forms a Morley sequence in $q$ over $R$ and \[dcl_R ((a_i)_{i<\kappa}\cap p|_R) \subseteq C(\mathcal{R}).\]
    \item  $\dcl_R ((a_i)_{i<\kappa}\cap p|_R)=\emptyset$.
\end{enumerate}
\end{lemma}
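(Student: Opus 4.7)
By Proposition~\ref{equivalent to star}, proving property $\ast$ for $(R',C)$ with $R'=R\langle(a_i)_{i<\kappa}\rangle$ reduces to showing $t_1\ind_{R'}t_2\ldots t_n$ for every finite Morley sequence $(t_1,\ldots,t_n)$ in $p'|_{R'}$, where $p'$ denotes the invariant type over $\mathcal{R}$ corresponding to the cut $C$ relative to $R'$. My plan is to splice $(t_1,\ldots,t_n)$ with an appropriate finite segment $(a_{i_1},\ldots,a_{i_m})$ of $(a_i)$ so as to produce a finite Morley sequence in $p$ (or in $q$, to be reversed via Proposition~\ref{equivalent to star}(3)) over $R$, and then transfer independence from $R$ to $R'$ via base monotonicity of non-dividing, exploiting that enlarging the right-hand side by $R$ does not affect non-dividing over $R$.

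In case (1), the hypothesis places every cut realization of $R'$ above $C(\mathcal{R})$, so each $a_i$ lies in $R'^{>C(R')}$ and therefore above every $t_j$; concatenating a decreasing initial segment of $(a_i)$ with $(t_1,\ldots,t_n)$ yields a decreasing sequence, which one verifies is Morley in $p$ over $R$ by checking at each step $j$ that $t_j\models p|_{R(a_{i_\ell},t_{<j})}$. Case (2) is dual: under the symmetric hypothesis, each $a_i$ lies in $C(R')$ and hence below every $t_j$, so the splicing gives a Morley sequence in $q$ over $R$, converted to $p$ via Proposition~\ref{equivalent to star}(3). Case (3), where $\dcl_R((a_i))\cap p|_R=\emptyset$, forces $C(R')$ to contain no cut realizations of $R$ and makes $(t_1,\ldots,t_n)$ itself Morley in $p$ over $R$ with no splicing; moreover $p'=p$ on the relevant constraints, and $t_1\models p|_{R'}$ gives the auxiliary $t_1\ind_R R'$ that compensates for the absence of $(a_i)$ in the Morley sequence.

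With the augmented Morley sequence $\sigma$ in $p$ over $R$, Proposition~\ref{sym} applied at the split between $t_1$ and $t_2$ yields $(a_{i_1},\ldots,a_{i_m},t_1)\ind_R(t_2,\ldots,t_n)$. Since non-dividing over $R$ is unaffected by enlarging the right-hand side by $R$ itself, base monotonicity pushes the base up from $R$ to $R(a_{i_1},\ldots,a_{i_m})$, giving $t_1\ind_{R(a_{i_1},\ldots,a_{i_m})}(t_2,\ldots,t_n)$. Letting the finite segment $(a_{i_\ell})$ exhaust $(a_i)_{i<\kappa}$ and invoking finite character of non-dividing yields $t_1\ind_{R'}(t_2,\ldots,t_n)$, which is the desired independence. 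In case (3) this final step applies trivially with empty segment, the transfer from $\ind_R$ to $\ind_{R'}$ being mediated by $t_1\ind_R R'$ together with orthogonality between $p$ and the non-cut types of the $a_i$'s in the o-minimal structure $R$.

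The principal technical obstacle is the verification, in each case, that the spliced sequence is genuinely Morley in $p$ (respectively $q$) over $R$: this requires a careful case analysis of the formulas ``$x<y$'' for $y\in R(a_{i_\ell},t_{<j})$ with $y>C(R)$, showing that each such $y$ is either a cut element of $R'$ correctly positioned by the hypothesis (forcing $t_j<y$ via $t_j<R'^{>C(R')}$ in case (1) or the dual inequality in case (2)), or an ``above-cut'' element majorizing some $r\in R^{>C(R)}$ (handled by $t_j<R^{>C(R)}$), or an earlier $t_k$ (handled by the monotonicity of the Morley sequence). Each of the three hypotheses on $(a_i)$ is precisely tailored so that this case analysis succeeds, which is the content of the lemma.
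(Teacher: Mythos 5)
Your splicing argument for cases (1) and (2) is essentially the paper's proof: in case (1) one concatenates $(a_i)$ (a Morley sequence in $p$ over $R$) with a Morley sequence $(t_i)$ in $p$ over $R'=R\langle(a_i)\rangle$ to get a Morley sequence in $p$ over $R$, obtains $t_1(a_i)\ind_R t_2\dots t_n$ from Proposition~\ref{sym} (the paper uses indiscernibility plus property $\ast$, but the effect is the same), and pushes the base up to $R'$; case (2) is the mirror image. One small correction: in case (2) take the $(t_i)$ to be a Morley sequence in $q$ over $R'$ from the outset rather than ``reversing'' a $p$-Morley sequence via Proposition~\ref{equivalent to star}(3) -- that reversal equivalence, read over $R'$, is exactly the conclusion you are trying to establish, so invoking it there is circular (over $R$ it is fine, and that is how one sees the $p$- and $q$-formulations of $\ast$ are interchangeable).

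Case (3) has a genuine gap, and it is the hard case. Since the $a_i$'s do not realize $p|_R$, there is no way to splice them into a Morley sequence in $p$ or $q$, and ``the final step applies trivially with empty segment'' is not correct: you need $t_1\ind_{R'}t_2\dots t_n$, which by base monotonicity would follow from $t_1(a_i)\ind_R t_2\dots t_n$, but you only have the two separate facts $t_1\ind_R R'$ and $t_1\ind_R t_2\dots t_n$. Non-dividing does not let you combine two instances of independence over the same base into one with a larger right-hand side -- that would be a form of the independence theorem, unavailable here -- and the appeal to ``orthogonality between $p$ and the non-cut types of the $a_i$'s'' is neither a fact you have proved nor one that holds for free in o-minimal structures. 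Indeed, $t_1\ind_R R't_2\dots t_n$ amounts to the non-existence of a witness $s=f_{\underline a}(t_2,\dots,t_n)$ with $t_1<s<R'^{>C(R')}$, which is essentially property $\ast$ for $R'$, the very thing to be proved; so the step is circular. The paper handles case (3) with a genuinely different argument: reduce to a single $a$ with $\dcl_R(a)\cap p|_R=\emptyset$, assume a failure of $\ast$ in $R\langle a\rangle$ witnessed by $t_1<f_a(\underline t)$, and construct a tuple $t''_1,\underline t'$ that (after a nontrivial inductive claim requiring a monotonicity case analysis on an auxiliary one-variable function $H$) is a Morley sequence in $p$ over $R$ with $a\in\dcl_R(t''_1,\underline t')$; this lets one rewrite $f_a$ with parameters drawn from a Morley sequence over $R$ and contradict property $\ast$ over $R$. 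That function-analytic argument is what your ``orthogonality'' is standing in for, and it cannot be omitted.
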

\begin{proof}
Assume (1) holds.  Take a Morley sequence $(t_i)_{i<\omega}$ in $p$ over $R\langle (a_i)_{i<\kappa}\rangle$.   
Then $(a_i)_{i<\kappa} (t_i)_{i<\omega}$ is also a Morley sequence in $p$ over $R$.  Note that $t_1 (a_i)_{i<\kappa}\ind_{R} t_2 \dots t_n $, because if not, there would be a finite subtuple $a_{i_0}, \dots, a_{i_k}$ with \[ a_{i_0} \dots a_{i_k} t_1 \nind_{R} t_2 \dots t_n , \] and, by indiscernibility of the Morley sequence, we would have \[t_1 t_2 \dots t_{k+2}\nind_{R} t_{k+3} \dots t_{k+n+1},\] a contradiction with $C$ having property $\ast$. 
From $t_1 (a_i)_{i<\kappa}\ind_{R} t_2 \dots t_n $, it follows that $t_1 \ind_{R\langle (a_i)_{i<\kappa}\rangle} t_2 \dots t_n$.

If (2) holds, one carries out the analogous argument with the Morley sequence $(t_i)_{i<\omega}$ in $q$.

Now assume (3).  Suppose for a contradiction that $C$ does not define a cut in $R\langle (a_i)_{i<\kappa} \rangle$ that has property $\ast$, and that this is witnessed by a finite Morley sequence $t_k < t_{k-1}< \dots < t_1 $ in $p|_{R\langle (a_i)_{i<\kappa} \rangle}$ and an $R$-definable function $f$, i.e. $t_1 < f_{\underline{a}} (\underline{t})$, where $\underline{a}$ is a finite subsequence of $(a_i)_{i<\kappa}$, $\underline{t} = t_2 , \dots , t_k$ and $f_{\underline{a}}(\underline{t}) \models p|_{R\langle (a_i)_{i<\kappa} \rangle}$.  Then the cut defined by $C$ in $R\langle \underline{a} \rangle $ does not have property $\ast$.  Thus, if we could prove that $C$ defining a cut in $R$ with property $\ast$ implies that $C$ also defines a cut with property $\ast$ in  $R\langle a\rangle$, where $a$ is a singleton and $\dcl_R (a)\cap p|_R=\emptyset$, then by applying this fact finitely many times, we could show $C$ defines a cut with property $\ast$ in  $R\langle \underline{a}\rangle$.  So towards a contradiction, we assume $C$ does not define a cut with property $\ast$ in $R\langle a\rangle$.

Let $t_k < t_{k-1}< \dots < t_1 $ be a finite Morley sequence in $p$ over $R\langle a \rangle$ and $f$ an $R$-definable function with $t_1 < f_a (\underline{t})$, where $\underline{t} = t_2 , \dots , t_k$, and $f_{a}(\underline{t}) \models p|_{R\langle a \rangle}$.

Set $t'_1 = f_a (\underline{t})$ and let $t'_k < \dots <t'_2<t'_1$ be a finite Morley sequence in $p$ over $R\langle a \rangle$ so that $t'_2 < t_k$.  Since $\type(\underline{t}/R\langle a \rangle)=\type(\underline{t}'/R\langle a \rangle )$, we have \[t'_1 = f_a (\underline{t})<f_a (\underline{t}')<R\langle a \rangle^{>C(R\langle a \rangle )}.\]

We now set $t''_1=f_a (\underline{t}')$.
\begin{claim}
The sequence $t''_1 , t'_2 , \dots , t'_k$ is a finite Morley sequence in $p$ over $R$.
\end{claim}
\begin{proof}
This is proved by induction on $k$. We clearly have $t''_1 \models p|_{R}$.  Now suppose that $t''_1 , t'_2 , \dots ,t'_l$ is a finite Morley sequence in $p$ over $R$.  If $t''_1 , t'_2 , \dots t'_{l+1}$ was not, then we would have $C<h(t''_1 , t'_2 , \dots t'_l)<t'_{l+1}$.  Let $H(x)=h(x, t'_2 , \dots ,t'_l )$ and consider $H(t'_1)$.  The function $H$ is continuous and strictly monotone on some interval, $I$, with left endpoint in $\dcl(t'_2 \dots t'_l)$ and realizing the type $p|_{R}$, and right endpoint in the convex hull of $R^{>C(R)}$ in $\mathcal{R}$.  

If $H$ is increasing on that interval, then $H(t'_1)<r$ for some $r\in C(R)$, since otherwise $t'_1, \dots, t'_{l+1}$ would not be a Morley sequence in $p$ over $R$.  But if $H(t'_1)<r$, then any $r'>r$, $r'\in C(R)$ would have to be such that $t'_1<H^{-1}(r')<t''_1$, a contradicting that $C$ defines a cut with property $\ast$ in $R$.

If $H$ is decreasing, then a coinitial segment of $R^{>C(R)}$ would have to be mapped to a cofinal segment of $C(R)$, since if there were any $r \in R^{>C(R)}$ with $H(r)\models p|_R$ this would necessarily be less than $t'_{l+1}$ (since $H(r)<H(t''_{1})$), contradicting $t'_1, \dots, t'_{l+1}$ being a Morley sequence in $p$ over $R$.  If, on the other hand, $r\in C(R)$ is such that $H^{-1}(I \cap R^{>C(R)})<r$, then $H^{-1}(r)>t''_1$, hence $H^{-1}(r)$ cannot satisfy $p|_R$ because the cut defined by $C$ in $R$ has property $\ast$.  But $H$ mapping a coinitial segment of $R^{>C}$  to a cofinal segment of $C(R)$ contradicts $C$ defining a cut in $R$ with property $\ast$. 
\end{proof}
Note that 
$a \in \dcl_R (t''_1 , \underline{t}')$.  Extend $t''_1 , \underline{t}'$ to a finite Morley sequence $t''_1 , \underline{t}', \underline{t}''$ in $p|_{R}$. Then $t''_1 < f_a (\underline{t}'')$ and $f_a (\underline{t}'')\models p|_{R\langle a \rangle}$.  But $f_a (\underline{t}'')=f_{g(t''_1 , \underline{t}')}(\underline{t}'')$ for some $R$-definable function $g$.  Hence the function$f(g (\dots )\dots )$ and the finite Morley sequence $t''_1 , \underline{t}' , \underline{t}''$ in $p|_{R}$ witness the failure of $\ast$ in the cut defined by $C$ in  $R$, a contradiction.

\end{proof}

\begin{section}{Elementary extensions}

The theorem below is an analog of Theorem 3.3, p. 244 in \cite{mc}. The difference between the two statements is that in \cite{mc}, $R$ is assumed to expand a field and $C$ (called ``$V$'' in \cite{mc}) is assumed to be a proper convex subring of $R$.  Moreover, Theorem 3.3 in \cite{mc} needs an additional assumption such as $V$ has property $\ast$  (though $(R,V)\models \Sigma$, i.e. the induced structure on the residue field of $(R,V)$ is o-minimal, would also suffice) to be correct.

Note that while Theorem 3.3 in \cite{mc} is an implication, we state it here as an equivalence. The easy direction (which went unstated in \cite{mc}) being if $(R,C)\preceq (R\langle a \rangle , C)$, then we cannot have $C(R)<a,f(a)<R^{>C(R)}$, where $f$ is an $R$-definable function and $a\in C(R\langle a \rangle )$ and $f(a)>C(R\langle a \rangle)$.  Else, \[(R\langle a \rangle , C)\models \exists x \in C\; f(x)>C,\] hence \[(R,C) \models \exists x\in C \; f(x)>C, \] yielding a contradiction with $f$ being increasing (see Fact \ref{decreasing}).

\begin{theorem}\label{elextnew}
Suppose $C$ is a cut in $R$ with property $\ast$, and let $a\in \mathcal{R}\setminus R$.  Then $(R,C)$ fails to be an elementary submodel of  $(\ra, C)$ if and only if there is an element of $C(\ra )$ greater than any element of $C(R)$ (WLOG, we may assume this element is $a$) and an $R$-definable function, $f$, such that \[C(\ra) <f(a)<R^{>C(R)}.\] 
\end{theorem}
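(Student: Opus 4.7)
The plan is to handle the two directions separately. The $(\Leftarrow)$ direction is essentially the argument sketched immediately before the theorem statement, so the substantive content lies in the $(\Rightarrow)$ direction.

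For $(\Rightarrow)$, I would assume $(R,C)\not\preceq(R\langle a\rangle,C)$ and first invoke the Tarski--Vaught criterion to obtain an $\mathcal{L}$-formula $\phi(x)$ with parameters in $R$ together with a witness $c\in R\langle a\rangle$ realizing $\phi$ in $(R\langle a\rangle,C)$ such that no $r\in R$ does. Using QE for $T_0$ in $\mathcal{L}_0$, every $\mathcal{L}$-formula is a Boolean combination of $\mathcal{L}_0$-formulas and atoms of the form $C(t(x))$ for $\mathcal{L}_0$-terms $t$. Passing to disjunctive normal form and selecting a disjunct satisfied by $c$, and then using that $C$ is downward-closed to combine all positive $C$-atoms into a single $C(g(x))$ with $g:=\max_i t_i$ and all negative ones into $\neg C(h(x))$ with $h:=\min_j t_j$, I would reduce $\phi$ to a canonical form $\psi(x)\wedge C(g(x))^{\delta_1}\wedge\neg C(h(x))^{\delta_2}$, where $\psi$ is $\mathcal{L}_0$-definable over $R$ and $\delta_i\in\{0,1\}$.

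The central technical step is a transfer argument using $(R,C)\preceq(\mathcal{R},C)$. First I would show that both $\delta_1=\delta_2=1$: if, say, only the positive $C$-atom appears, so $\phi=\psi\wedge C(g(x))$, then the absence of an $R$-witness says exactly that the universal $\mathcal{L}$-sentence $\forall x(\psi(x)\to\neg C(g(x)))$ holds in $(R,C)$; transferring it to $(\mathcal{R},C)$ and applying at $c$ gives $\neg C(g(c))$, contradicting $\phi(c)$. The case of only a $\neg C$-atom is symmetric. Next I would show $g(c)>C(R)$: if $g(c)\leq r_0$ for some $r_0\in C(R)$, then the strengthened formula $\phi'(x):=\psi(x)\wedge g(x)\leq r_0\wedge\neg C(h(x))$ is still witnessed by $c$ in $R\langle a\rangle$ and still has no $R$-realization (since $g(x)\leq r_0\in C(R)$ forces $C(g(x))$ on any $r\in R$, so any $R$-witness of $\phi'$ would be an $R$-witness of $\phi$); but $\phi'$ lacks a positive $C$-atom, contradicting the previous step. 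A symmetric argument yields $h(c)<R^{>C(R)}$.

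Finally, I would set $b:=g(c)$ and, using that $g$ is strictly monotone and continuous on a cell of $\psi^{R\langle a\rangle}$ containing $c$ (by o-minimality, after refining $\psi$), define $f:=h\circ g^{-1}$ on the image of that cell; this $f$ is $R$-definable. Then $b\in C(R\langle a\rangle)$ with $b>C(R)$, and $f(b)=h(c)$ satisfies $C(R\langle a\rangle)<f(b)<R^{>C(R)}$; relabeling $b$ as $a$ completes the argument. The main obstacle I anticipate is the clean formulation of the canonical form for $\phi$ and the careful management of the transfer and reduction steps (in particular, justifying that the $\max$/$\min$ combinations are $R$-definable and that the reduction of $\phi$ to $\phi'$ preserves the no-$R$-witness property). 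The hypothesis that $C$ has property $\ast$, though not invoked directly in the above extraction, governs the coherence of the setup: via Proposition \ref{counterexamplesym} it forces $C$ to be valuational, and it controls how $R$-definable functions interact with the cut, which is essential for subsequent uses of this theorem in the paper.
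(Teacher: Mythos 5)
Your reduction of the Tarski--Vaught witness formula $\phi(x)$ to the canonical form $\psi(x)\wedge C(g(x))^{\delta_1}\wedge\neg C(h(x))^{\delta_2}$ is where the argument breaks down. Tarski--Vaught only produces an arbitrary $\mathcal{L}$-formula $\phi$, which may have quantifiers whose scope contains the predicate $C$ (e.g., $\exists y\,(\chi(x,y)\wedge C(y))$). Quantifier elimination for $T_0$ in $\mathcal{L}_0$ does nothing to such a formula, since $\chi(x,y)\wedge C(y)$ is not an $\mathcal{L}_0$-formula. The claim that ``every $\mathcal{L}$-formula is a Boolean combination of $\mathcal{L}_0$-formulas and atoms of the form $C(t(x))$'' is precisely quantifier elimination for $T$ itself, which is Theorem~\ref{qe} --- a result that is proved \emph{from} Theorem~\ref{elextnew} later in the paper. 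The argument is therefore circular. A telling symptom is that your proof never actually invokes property $\ast$; you remark that it ``governs the coherence of the setup,'' but if your extraction were correct it would prove the theorem for \emph{all} cuts, which is false (this is exactly the error in Theorem~2.7 of \cite{mc} that the present paper is correcting).

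The paper instead proves the hard direction by contrapositive and by controlling externally definable sets rather than individual formulas: assuming no such $f,g$ exist, Lemma~\ref{superstructures} gives that $(\ra,C)$ still has property~$\ast$, Lemma~\ref{lemma3.2} produces a single $R$-invariant type $q$ whose restrictions to both $R$ and $\ra$ realize the cut of $C$, and Corollary~\ref{elementary extension} (built on the Baisalov--Poizat trace quantifier elimination of Proposition~\ref{poizat} and its uniformity, Remark~\ref{poizat_uniformity}) then yields $(R,C)\preceq(\ra,C)$ by a Tarski--Vaught test carried out entirely at the level of traces of $\mathcal{L}_0$-definable sets. The crucial technical input that makes the trace quantifier elimination go through is the $R$-separation of Morley sequence segments (Corollary~\ref{separation}), which is where property~$\ast$ enters essentially. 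Your $(\Leftarrow)$ direction and the final $h=g\circ f^{-1}$ manipulation are fine, but the $(\Rightarrow)$ direction needs to be rebuilt on the trace machinery rather than on a formula-by-formula syntactic reduction.
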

\begin{proof}
 This may be proved in an almost identical fashion to the proof of Theorem 3.3 \cite{mc} (which encompasses paragraphs 2 and 3), and below, for the reader familiar with that proof, we outline the minor changes needed.
 \begin{enumerate}
 \item All instances of ``$V$'' need to be replaced by ``$C$''. 
 \item Replace Theorem 2.7, p. 239 \cite{mc} with Corollary \ref{sym}.
 \item Corollary 2.9, p. 241 \cite{mc} needs the additional assumption that $p|_R$ is the type corresponding to $C$ and $C$ has property $\ast$.
 \item The assumption that $(R,V)$ has property $\ast$ must be added to the hypotheses of Lemma 3.1 and appeals to Corollary 2.9 are replaced by appeals to this hypothesis.
 
 \item Note that the cut of type $p$ in $R\langle a \rangle$ also has property $\ast$ by Lemma \ref{superstructures} of this paper.
 
 \item In the proof of Lemma 3.1, p. 242, lines -18 to -15: The field assumption is used to reduce 4 cases to 2. But the two remaining cases can simply be proved in the same fashion as Case 2.
 \item In the proof of Lemma 3.1, p. 243, lines 1 and 2: ``Note that since $V$ is a group, ...'' should be replaced with ``Note that since $(C,R^{>C})$ is a valuational cut, ...'' and on page 243, lines 4,5 ``But $\frac{1}{2}\beta$ is also greater than every element of $V$, ...'' should be replaced with ``But $\beta - \epsilon$, where $\epsilon >0$ is such that $r-c>\epsilon$ for all $r\in R^{>C}$ and all $c\in C$, is also greater than every element of $C$, ...''. 
 
 \item The hypothesis of satisfying property $\ast$ must be added to the hypotheses of Lemma 3.2 and Theorem 3.3 and appeals to Lemma \ref{superstructures} (of this paper) added as appropriate to their proofs.

 \end{enumerate}
\end{proof}

For readers not already familiar with the proofs in \cite{mc}, this may be a significant amount of work, especially as the proofs in \cite{mc} themselves require one to go through the proofs in \cite{bp} to confirm results implicit, but not stated, therein.  We have therefore included a new, simplified proof of Theorem 3.3 of \cite{mc} in the remainder of this section which includes an easy proof of the results implicit in \cite{bp} that we require.  This new proof also contains a result that may be of independent interest, namely Proposition \ref{equivalent to valuational}, a characterization of the divide between valuational and non-valuational cuts in neostability theoretic terms.

In order to prove Theorem \ref{elextnew}, we will need a result on quantifier elimination for traces (Proposition \ref{poizat}) together with statement about the uniformity of this quantifier elimination (Remark \ref{poizat_uniformity}), both of which may be extracted from the proofs of Baisalov and Poizat  in \cite{bp}. 

We shall need the notion of separation which was introduced in \cite{bp}:

\begin{definition}\label{separated}
Let $a \in R^m$, $b \in R^n$, $A\subseteq R$, and let $p\in S_1(A)$.  Then $a$ and $b$ are {\em separated in $p$\/ over $A$} if either $$\dcl_{\mathcal{L}_0}(aA)\cap p(R) < \dcl_{\mathcal{L}_0}(bA)\cap p(R) \mbox{ or } \dcl_{\mathcal{L}_0}(bA)\cap p(R) < \dcl_{\mathcal{L}_0}(aA)\cap p(R).$$
We say that $a$ and $b$ are {\em $A$-separated\/} if they are separated in all one-types over $A$.  Note that if $a$ and $b$ are separated in $p$ and $\dcl_{\mathcal{L}_0}(aA)\cap p(R) \neq \emptyset$ then $a$ and $b$ are $A$-separated.
\end{definition} 

We are able to provide short proofs by limiting ourselves to the case where the externally definable set is a trace of a formula whose parameters form a Morley sequence in a cut satisfying $\ast$, using our result (Corollary \ref{sym}) on symmetry of dividing in Morley sequences, thus easily obtaining separated tuples.  (This is also Corollary 2.9 of \cite{mc}, although there it appeared (incorrectly) without the hypothesis $\ast$.)

\smallskip\noindent

\begin{corollary}\label{separation}
 Let $p \in S_1 (\mathcal{R})$ be an $R$-invariant extension of the $\mathcal{L}_0$-type over $R$ which states that $x$ realizes the cut $C$, and assume that latter type has property $\ast$.  If $t_1, \dots , t_n$ is a finite Morley sequence in $p$ over $R$ and $1 \leq k <n$, then $t_{1},\dots ,t_{k}$ is $R$-separated from $t_{k+1},\dots  ,t_{n}$.
\end{corollary}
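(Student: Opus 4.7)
The plan is to apply Proposition \ref{sym}, which, using property $\ast$, furnishes the symmetric nondividings $\underline{t}\dind_R \underline{s}$ and $\underline{s}\dind_R \underline{t}$, where $\underline{t}:=(t_1,\ldots,t_k)$ and $\underline{s}:=(t_{k+1},\ldots,t_n)$. Fix $q\in S_1(R)$ and set $X:=\dcl_{\mathcal{L}_0}(\underline{t}R)\cap q(\mathcal{R})$ and $Y:=\dcl_{\mathcal{L}_0}(\underline{s}R)\cap q(\mathcal{R})$. If $q$ is isolated, then $q(\mathcal{R})=\{r\}$ for some $r\in R$, so $X=Y=\{r\}$ and separation is immediate; assume henceforth that $q$ is a cut.

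My first step is to verify that $\dcl_{\mathcal{L}_0}(\underline{t}R)\cap\dcl_{\mathcal{L}_0}(\underline{s}R)=R$ via o-minimal exchange. If some $x$ lay in both closures but outside $R$, then applying exchange on the $\underline{t}$-side would yield some $t_i\in\dcl_{\mathcal{L}_0}(R,x,\underline{t}\setminus\{t_i\})\subseteq\dcl_{\mathcal{L}_0}(R,\{t_j\}_{j\neq i})$; a standard inductive application of exchange along the Morley sequence, using non-algebraicity of each $\tp(t_m/R,t_{<m})$, then yields a contradiction. Since $q$ is a cut, $R\cap q(\mathcal{R})=\emptyset$, hence $X\cap Y=\emptyset$.

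To conclude separation I must show $X$ and $Y$ are comparable in the linear order. Suppose for contradiction that $a,a'\in X$ and $b\in Y$ with $a<b<a'$; write $a=f_1(\underline{t})$, $a'=f_2(\underline{t})$, $b=h(\underline{s})$ with $\mathcal{L}_0$-$R$-definable $f_1,f_2,h$. The $\mathcal{L}_0$-formula $\phi(x;\underline{t}):=f_1(\underline{t})<h(x)<f_2(\underline{t})$ over $R\underline{t}$ is satisfied by $\underline{s}$, and by $\underline{s}\dind_R\underline{t}$ it does not divide over $R$. Extending $\underline{t}$ to an $R$-indiscernible Morley sequence $(\underline{t}^{(i)})_{i<\omega}$ in $p^{(k)}$ over $R$ (with $\underline{t}^{(0)}=\underline{t}$), I deduce that the set $\{\phi(x;\underline{t}^{(i)})\}_{i<\omega}$ is consistent, realized by some $\underline{s}'$ with $h(\underline{s}')\in\bigcap_{i<\omega}(f_1(\underline{t}^{(i)}),f_2(\underline{t}^{(i)}))\subseteq q(\mathcal{R})$.

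The main obstacle is to contradict the existence of such a common witness. For this I would invoke property $\ast$ more directly than through Proposition \ref{sym}, using Proposition \ref{equivalent to star}(1) to forbid $R$-definable functions from mapping a cofinal sequence in $C(R)$ onto a coinitial sequence in the convex hull of $R^{>C(R)}$; combined with the o-minimal monotonicity of $f_1,f_2$ on the realizations of $p^{(k)}$, this should force the intervals $(f_1(\underline{t}^{(i)}),f_2(\underline{t}^{(i)}))$ to become pairwise disjoint in $q(\mathcal{R})$ along the Morley sequence. Since $h$ is a function, no $h(\underline{s}')$ can inhabit all of them, yielding the contradiction. The hard part is carrying out this final disjointness argument carefully -- this is precisely where property $\ast$ enters in an essential way beyond the symmetric nondividing already secured by Proposition \ref{sym}.
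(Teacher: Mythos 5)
Your overall decomposition matches the paper's: invoke Proposition \ref{sym} to obtain $\underline{t}\dind_R\underline{s}$ and $\underline{s}\dind_R\underline{t}$, rule out a common element of the two definable closures outside $R$, and rule out interleaving in each cut $q$. The exchange argument for $\dcl_{\mathcal{L}_0}(\underline{t}R)\cap\dcl_{\mathcal{L}_0}(\underline{s}R)=R$ is correct. But the interleaving step is left genuinely incomplete, and the route you propose for closing it would not work.

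First, the fix you gesture at is the wrong tool. Property $\ast$, via Proposition \ref{equivalent to star}(1), is a statement about $\mathcal{L}_0$-definable maps from the cut $C$ to itself; the cut $q$ in your interleaving configuration is an arbitrary nonrealized $1$-type over $R$, and $f_1,f_2$ map realizations of $p$ to realizations of $q$, not $C$ to $C$. Second, the intervals $\bigl(f_1(\underline{t}^{(i)}),f_2(\underline{t}^{(i)})\bigr)$ need not become disjoint along a Morley sequence $(\underline{t}^{(i)})_i$ in $p^{(k)}$: by indiscernibility they can nest outward ($f_1$ decreasing, $f_2$ increasing along the sequence), in which case $\bigcap_i$ is just the innermost interval and you get no contradiction from the common witness $\underline{s}'$.

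What actually closes the gap is a standard dividing computation that uses no property $\ast$ at all. Suppose $b=h(\underline{s})\in\dcl_{\mathcal{L}_0}(\underline{s}R)$ lies strictly between $c_1=f_1(\underline{t})$ and $c_2=f_2(\underline{t})$ in $\dcl_{\mathcal{L}_0}(\underline{t}R)$, all three realizing $q$. The formula $c_1<x<c_2$ is over $R\underline{t}$ and lies in $\tp(b/R\underline{t})$. Since $q$ is nonrealized over the model $R$, there are realizations of $q$ cofinally above $c_2$, so by homogeneity one can choose $\sigma\in\mathrm{Aut}(\mathcal{R}/R)$ with $\sigma(c_1)>c_2$; as $\sigma$ is order-preserving, $\sigma(c_2)>\sigma(c_1)>c_2$, so the interval $(\sigma(c_1),\sigma(c_2))$ is an $R$-conjugate of $(c_1,c_2)$ disjoint from it. Iterating and extracting an $R$-indiscernible subsequence yields $2$-inconsistency, so $c_1<x<c_2$ divides over $R$; hence $b\ndind_R\underline{t}$ and, by monotonicity, $\underline{s}\ndind_R\underline{t}$, contradicting Proposition \ref{sym}. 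The other interleaving case (an element of $\dcl_{\mathcal{L}_0}(\underline{s}R)$ on the outside) is symmetric, using $\underline{t}\dind_R\underline{s}$ — you should state both cases, since each direction of nondividing from Proposition \ref{sym} is needed once.

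So: right skeleton, correct first two steps, but a genuine gap at the crux, and the suggested patch (more property $\ast$, intervals disjoint along $p^{(k)}$) is misdirected; the correct patch is the general escape-by-automorphism argument, which needs nothing beyond $q$ being a nonrealized type over a model.
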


\begin{proof}
This is simply Proposition \ref{sym} together with the definition of separation.
\end{proof}

\noindent In addition, we shall use the following fact:
\begin{fact}\label{cut}
 Let $a_1, \dots, a_k$ be such that each $a_i$ realizes a cut in $R\langle a_1, \dots a_{i-1}\rangle$.  Then every element of $R\langle a_1, \dots a_k\rangle \setminus R$ realizes a cut in $R$.  
\end{fact}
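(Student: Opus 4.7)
The plan is to proceed by induction on $k$, reducing to the following transitivity-type lemma: if $R \preceq R'$ is such that every element of $R' \setminus R$ realizes a cut over $R$, and $a$ realizes a cut over $R'$, then every element of $R'\langle a\rangle \setminus R$ realizes a cut over $R$. Applying this iteratively with $R_i := R\langle a_1,\ldots,a_i\rangle$, $R' = R_i$, $a = a_{i+1}$ (the base case $i=0$ being vacuous) yields the fact.

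To prove the lemma, take $b \in R'\langle a\rangle \setminus R$; the case $b \in R'$ is immediate by hypothesis, so write $b = f(a)$ with $f$ an $\mathcal{L}_0$-definable function over $R'$ and $b \notin R'$. By o-minimality of $T_0$, $a$ lies in an interval $I$ with endpoints in $R' \cup \{\pm\infty\}$ on which $f$ is constant (which would force $b \in R'$, excluded) or strictly monotone and continuous; in the latter case $f$ has an $\mathcal{L}_0$-definable continuous monotone inverse on $f(I)$. I first show $b$ realizes a cut over $R'$: if instead $\tp(b/R')$ were definable --- i.e., $s^{\pm}$ for some $s \in R'$, or $\pm\infty$ --- then pulling back through $f^{-1}$ would force $\tp(a/R')$ to be definable as well. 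For instance, if $b = s^{+}$ and $f$ is increasing then $a = f^{-1}(s)^{+}$ with $f^{-1}(s) \in R'$; if $b > R'$, $f$ is increasing, and the right endpoint of $I$ is some $r_2' \in R'$, then $a$ is just below $r_2'$; if that endpoint is $+\infty$ then $a > R'$; the remaining sign variants are analogous. Each conclusion contradicts $a$ realizing a cut over $R'$.

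Finally I transfer $b$'s cut status from $R'$ to $R$. Clearly $b \notin R$. For boundedness of $b$ in $R$: a witness $r' \in R'$ with $r' < b$ is either already in $R$, or lies in $R' \setminus R$ and hence realizes a cut over $R$, so itself is bounded below by some $r \in R$; the upper bound is symmetric. For the no-supremum condition, suppose toward contradiction that $s := \sup\{r \in R : r < b\}$ exists in $R$. Then any $r' \in R'$ with $r' < b$ must satisfy $r' \leq s$: if $r' \in R$ this is immediate, while if $r' \in R' \setminus R$ with $r' > s$, then since $r'$ realizes a cut over $R$ its lower cut has no maximum, producing some $r \in R$ with $s < r < r' < b$ and contradicting the choice of $s$. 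Hence $s = \sup_{R'}\{r' \in R' : r' < b\}$, making $\tp(b/R') = s^{+}$ a definable type, contradicting the previous paragraph. The main obstacle is the case analysis in that paragraph, where one must carefully track how a strictly monotone continuous $\mathcal{L}_0$-definable map interacts with each of the four kinds of definable 1-types together with whether the endpoints of $I$ are finite or $\pm\infty$; once this bookkeeping is done, everything falls into place.
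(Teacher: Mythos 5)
Your proof is correct and follows essentially the same route as the paper: both reduce to the two-step transitivity claim, first showing that $b\in R'\langle a\rangle\setminus R'$ realizes a cut over $R'$ (the $k=1$ case, which you prove by pulling $\tp(b/R')$ back through $f^{-1}$), then transferring that cut down to $R$ using that every element of $R'\setminus R$ realizes a cut over $R$. One difference worth noting: the paper disposes of the case $b>R$ or $b<R$ by ``replacing $b$ with $1/b$,'' a field operation not literally available here since $R$ is only assumed to expand a group; your boundedness argument, which inherits two-sided bounds on $b$ over $R$ from the cut over $R'$ together with the inductive hypothesis, avoids this shortcut and works at the stated level of generality.
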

\begin{proof}
 This is clear when $k=1$.  Suppose the statement is false, and consider the minimal $k$ for which it fails.  Let $b = f(a_1, \dots, a_k)$ witness its failure.  Replacing $b$ with $1/b$, if necessary, we may assume that there is a closest element of $R$ to $b$.  Call this element $r$.  Since $b$ lies in a cut in $R\langle a_1, \dots a_{k-1}\rangle$, there must be elements of $R\langle a_1, \dots a_{k-1}\rangle$ between $b$ and $r$.  But these elements must therefore realize a definable type over $R$, contradicting the minimality of $k$.
\end{proof}

Finally, we will need to take a global type, $p$, invariant over a small model $R$ where both the invariant type and its restriction to $R$ are non-definable, build a Morley sequence $t_1, \dots, t_n$ in $p$ over $R$, and have that $p|_{Rt_1\dots t_n}$ is also nondefinable.  The following example shows that this is not always the case:

\begin{example}
 Let $A$ be the collection of rational numbers less than $\pi$ and let $p$ be the global type implied by $\{x>a \;|\;a\in A\} \cup \{x<r \;|\; r>A\}$.  Clearly $p$ is invariant over the empty set and is valuational (as is any non-definable global type invariant over a small set).  If, however, $R$ is archimedean, then $p|_R$ is either a non-valuational cut or a non-cut.  Let $R$ be an archimedean model. If $t_1, t_2, \dots $ is a Morley sequence in $p|_R$ then the only realization of $p|_R$ in $R\langle t_1 \rangle$ is $t_1$ itself.  If $p|_R$ is a non-cut then, clearly, $\tp(t_1/R)$ is definable, and if $p|_R$ is a non-valuational cut, then $\tp(t_2/Rt_1)$ is definable.
\end{example}

On the other hand, if $p|_A$ is a valuational type and $B \supseteq A$, then $p|_B$ is also valuational (witnessed by the same $\epsilon$).  In particular, $t_1, \dots, t_k$ is a Morley sequence in $p|_A$,   then $p|_{At_1\dots t_k}$ is also valuational, and we have the following characterization of valuational cuts:

\begin{proposition}\label{equivalent to valuational}
 If $p$ is a global 1-type invariant over a small set $A$, and $B\supseteq A$, then the following are equivalent
\begin{enumerate}
 \item $p|_B$ is a valuational type.
 \item  $p|_B$ is a cut and whenever $\dcl(Bc)$ contains $t\models p|_B$, then $\dcl(Bc)$ contains more than one realization of $p|_B$.
  \item  For all $k$, if $t_1, \dots, t_k$ is a Morley sequence in $p|_B$ then $p|_{Bt_1\dots t_k}$ is nondefinable.
  \item If $t_1, t_2$ is a Morley sequence in $p|_B$, then $\tp(t_1/B)$ and $\tp(t_2/B t_1)$ are nondefinable.
\end{enumerate}

\end{proposition}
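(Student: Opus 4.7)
The plan is to establish the cycle $(1) \Rightarrow (2) \Rightarrow (3) \Rightarrow (4) \Rightarrow (1)$, with $(3) \Rightarrow (4)$ immediate (take $k=0,1$). The central tool is a rigidity lemma: if $p|_B$ is a non-valuational cut and $t \models p|_B$, then $\dcl(Bt)$ contains only $t$ itself as a realization of $p|_B$.

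For $(1) \Rightarrow (2)$, I will use that if $\epsilon \in B^{>0}$ witnesses valuationality of $p|_B$ (so $C^-_B + \epsilon = C^-_B$, and dually $C^+_B - \epsilon = C^+_B$), then for any $t \models p|_B$ we also have $t + \epsilon \models p|_B$: for every $c \in B$, $c < t + \epsilon$ iff $c - \epsilon < t$ iff $c-\epsilon \in C^-_B$ iff $c \in C^-_B$ iff $c < t$, using valuationality in the middle step. Since $\epsilon \in B$, both $t$ and $t+\epsilon$ lie in $\dcl(Bt) \subseteq \dcl(Bc)$ whenever $t \in \dcl(Bc)$, so $(2)$ follows.

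To prove the rigidity lemma, I will take a $B$-definable $f$ with $f(t) \models p|_B$, set $h(x) := f(x)-x$, and observe that non-valuationality forces the realization set $I := p|_B(\mathcal{R})$ to have diameter smaller than every $\eta \in B^{>0}$, so $|h(t)| < \eta$ for all such $\eta$. Since $p|_B$ is complete, it decides the sign of $h$; if $h(x)=0 \in p|_B$ we are done. If $h(x) > 0 \in p|_B$ (the $<0$ case is symmetric), an o-minimal analysis of the $B$-definable set $\{x : h(x) > 0\}$ together with Fact~\ref{decreasing} forces $t$ to lie infinitesimally above some $B$-definable endpoint $a$, making $\tp(t/B)$ the definable ``just above $a$'' type and contradicting that $p|_B$ is a cut. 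With rigidity in hand, both $(2)\Rightarrow(1)$ and $(4)\Rightarrow(1)$ follow by contrapositive: if $p|_B$ is a non-valuational cut, then $\dcl(Bt) \cap I = \{t\}$ directly contradicts $(2)$ at $c=t$, while for $(4)$ a short computation yields $\sup(p^- \cap \dcl(Bt_1)) = t_1 \in \dcl(Bt_1)$, so $p|_{Bt_1}$ is the definable type just above $t_1$ and $\tp(t_2/Bt_1)$ is definable.

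Finally, for $(2) \Rightarrow (3)$, the rigidity step above gives us valuationality of $p|_B$ with some witness $\epsilon$. To propagate this to $\dcl(Bt_1\dots t_k)$, I will enlarge the invariance base to $A' := A \cup \{\epsilon\}$, so that $p$ remains $A'$-invariant and $p|_{A'}$ is valuational with $\epsilon \in A'$, and then show $p^- + \epsilon \subseteq p^-$ globally by the following automorphism argument. For $c \in \mathcal{R}$ with $c < p$: either there exists $a \in A' \cap p^-$ with $c < a$ (then $c + \epsilon < a + \epsilon < p$ by $A'$-valuationality), or $c$ lies in the $A'$-gap at $p$, in which case $c \models p|_{A'}$ and so $c+\epsilon \models p|_{A'}$ as well (by $(1) \Rightarrow (2)$ applied at $A'$), giving $\tp(c/A') = \tp(c+\epsilon/A')$, and then $A'$-invariance of $p$ places $c$ and $c+\epsilon$ on the same side. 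Consequently $\epsilon$ witnesses valuationality of $p|_C$ for every $C \supseteq A'$; in particular, $p|_{Bt_1\dots t_k}$ is valuational, hence a cut, hence nondefinable. The hard part will be proving the rigidity lemma cleanly; the rest is bookkeeping around it and a clean application of $A'$-invariance to globalize the valuationality witness.
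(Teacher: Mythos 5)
Your overall plan is viable but more circuitous than the paper's, and one step has a real gap. The paper proves the cycle by establishing $(1)\Rightarrow(2)\Rightarrow(1)$, $(1)\Rightarrow(3)\Rightarrow(4)\Rightarrow(1)$ directly, using one simple device repeatedly: if $t\models p|_B$ and $s\in\dcl(Bt)\setminus\dcl(B)$, then $t$ and $s$ are $B$\emph{-interdefinable} by the exchange property, so $\tp(t/B)$ is definable iff $\tp(s/B)$ is. For $(2)\Rightarrow(1)$ it applies this with $s=|t-\tilde t|$: since $\tp(t/B)=p|_B$ is a cut (nondefinable), $|t-\tilde t|$ cannot realize the definable ``positive $B$-infinitesimal'' type, so some $\epsilon\in\dcl(B)^{>0}$ lies below it and witnesses valuationality. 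For $(4)\Rightarrow(1)$ it uses $s=t_1-t_2$ over $Bt_1$ and then the same observation over $B$ to extract $\epsilon$. Your route instead factors through a standalone ``rigidity lemma'' (a non-valuational cut has no nontrivial $B$-definable self-map among its realizations), from which you derive $(2)\Rightarrow(1)$ and $(4)\Rightarrow(1)$ by contrapositive, and then prove $(2)\Rightarrow(3)$ by running an automorphism argument to show the witness $\epsilon$ propagates to all larger parameter sets. The propagation argument is sound and in fact supplies a proof of the remark the paper states (without proof) just before the proposition, which is a genuine plus; but it makes your $(2)\Rightarrow(3)$ heavier than the paper's $(1)\Rightarrow(3)$, which simply cites that remark.

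The gap is in the rigidity lemma itself. Having reduced to $h(t)=f(t)-t$ being a positive $B$-infinitesimal, you assert that ``an o-minimal analysis of $\{x:h(x)>0\}$ together with Fact~\ref{decreasing} forces $t$ to lie infinitesimally above some $B$-definable endpoint.'' Fact~\ref{decreasing} concerns decreasing maps from a type to itself and does not, as stated, constrain $h$ or locate $t$ near an endpoint; the inference as written is not justified. The step can be completed in two ways. One is a careful boundary analysis: take a maximal $B$-definable interval $(a,b)\ni t$ on which $h$ is monotone (or constant) and positive, rule out the constant case, and show that the limit of $h$ at the appropriate endpoint must be $0$ (any positive limit would be a $B$-definable lower bound for $h(t)$), whence the $B$-definable preimages $h^{-1}(\eta)$ squeeze $t$ against that endpoint, making $\tp(t/B)$ an endpoint type. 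The other — and this is exactly the paper's device — is to skip the boundary analysis entirely: since $h(t)\in\dcl(Bt)\setminus\dcl(B)$, exchange makes $t$ and $h(t)$ interdefinable over $B$; but $\tp(h(t)/B)$ is the definable positive-infinitesimal type, so $\tp(t/B)$ would be definable, contradicting that $p|_B$ is a cut. If you patch the rigidity lemma with either of these, the rest of your argument goes through, though the interdefinability trick renders the separate lemma and most of your case analysis unnecessary.
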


\begin{proof}
Since $p$ is a 1-type invariant over $B$, we may assume, replacing $B$ with an appropriate subset of $\dcl(B)$, that either $p$ is implied by $\{x>b\;|\;b\in B\}\cup \{x<r\;|\;r>B\}$ or $p$ is implied by $\{x<b\;|\;b\in B\}\cup \{x>r\;|\;r<B\}$.  For convenience, we assume the former.  The proof in the latter case is identical.

    \begin{trivlist}
        \item[$1. \implies 2.$] Let $\epsilon$ witness that $p|_B$ is valuational, and let $t\models p|_B$.  Then $t-\epsilon\models p|_B$.
        \item[$2. \implies 1. $] Suppose $p|_B$ is a cut, and let $c$ be such that $t, \tilde t \in \dcl(Bc)$ both realize $p|_B$.  Note that $|t-\tilde t|$ cannot realize the right infinitessimal neighborhood of 0 in $\dcl(B)$, since $p|_B$ is a cut.  So let $\epsilon$ be a positive element of $\dcl(B)$ less than $|t-\tilde t|$.  This $\epsilon$ witnesses that $p|_B$ is valuational.
        \item[$1. \implies 3.$] Again, let $\epsilon$ witness that $p|_B$ is valuational.  Consider any $a\in\dcl(Bt_1\dots t_k)$ with $a\models p$.  Then $a-\epsilon \models p$.  Thus there is no smallest element of $p|_B \cap \dcl(Bt_1\dots t_k)$.  Since $p|_B$ is in particular a cut, there is no largest element of $B$.  Thus, $p|_{Bt_1 \dots t_k}$ is not definable.                                                                                        
        \item[$3. \implies 4.$] Clear.
        \item[$4. \implies 1.$] Let $t_1, t_2$ be a Morley sequence in $p|_B$.  Since $t_2$ and $t_1-t_2$ are interdefinable over $Bt_1$, and $\tp(t_2/B_{t_1})$ is nondefinable, $\tp(t_1-t_2/Bt_1)$ is also nondefinable.  In particular, $t_1-t_2$ does not realize the right infinitessimal neighborhood of $0$ in $\dcl(Bt_1)$, and one may choose positive $\tilde \epsilon\in \dcl(Bt_1)$ such that $t_2+\tilde\epsilon<t_1$.  Since $\tp(t_1/B)$ is nondefinable, $\tilde \epsilon$ does not realize the right infinitessmal neighborhood of $0$ in $\dcl(B)$, and one may choose positive $\epsilon\in \dcl(B)$ with $\epsilon<\tilde\epsilon$.  This $\epsilon$ witnesses that $p|_B$ is valuational.

    \end{trivlist}
\end{proof}

In the following proposition, we let $p$ be a global 1-type which is invariant over the empty set and is such that $p|_R$ has property $\ast$.  Consider an $|R|$-saturated elementary extension $\widetilde{R}$ of $R$.  Up to this point, $\mathcal{L}$ has been $\mathcal{L}_0$ expanded by a predicate for a cut.  For the remainder of this section we work in more generality. Let $S$ and $\tilde S$ be the realizations of $\theta(x_1, \dots, x_k, t_1, \dots, t_n)$ in $R$ and $\widetilde{R}$ respectively, where the parameters from $R$ are suppressed and the $t_1, \dots, t_n \in \widetilde{R}$ form a Morley sequence in $p$ over $R$.  Expand $\mathcal{L}_0$ to $\mathcal{L}$ by adding a predicate $S$ defining, in a slight abuse of notation, the set $S$ in $R$.

\begin{proposition}\label{poizat}
 Let $\varphi$ be an $\mathcal{L}$-formula with parameters from $R$.  Then there is an $\mathcal{L}_0$-formula, $\tau_{\varphi}$, such that  $\varphi(R)=\tau_{\varphi}(\widetilde{R})\cap R^k$, where the parameters of $\tau_{\varphi}$ form a Morley sequence in $p|_R$.  
\end{proposition}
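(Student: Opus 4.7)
The plan is to induct on the complexity of the $\mathcal{L}$-formula $\varphi$. The atomic and propositional cases are routine: for $\mathcal{L}_0$-atomic $\varphi$, take $\tau_\varphi = \varphi$; for an atomic formula of the form $S(\bar u)$, take $\tau_\varphi(\bar u, \bar t) = \theta(\bar u, \bar t)$ with the given Morley sequence $\bar t = t_1,\dots,t_n$. Boolean combinations are handled by combining the resulting $\tau$'s after first unifying their Morley sequence parameters: two Morley sequences in $p|_R$ can always be absorbed into a single longer Morley sequence (by extension and indiscernibility), and property $\ast$ is preserved along the way by Lemma \ref{superstructures}.

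The essential step is the existential quantifier. Suppose $\varphi(\bar x) = \exists y\, \psi(\bar x, y)$ and, by induction, $\psi(R^{k+1}) = \tau_\psi(\widetilde R) \cap R^{k+1}$ for some $\mathcal{L}_0$-formula $\tau_\psi(\bar x, y, \bar t)$ over a Morley sequence $\bar t$ in $p$ over $R$. Then for $\bar a \in R^k$, $R \models \varphi(\bar a)$ iff some $b\in R$ satisfies $\tau_\psi(\bar a, b, \bar t)$. By uniform $\mathcal{L}_0$-cell decomposition of $\tau_\psi(\bar x, y, \bar t)$ in the variable $y$, the fiber $\{y : \tau_\psi(\bar a, y, \bar t)\}$ is, for $(\bar a, \bar t)$ in each piece of an $\mathcal{L}_0$-definable partition, a bounded disjoint union of points and open intervals with endpoints given by $\mathcal{L}_0$-definable functions $\alpha_i(\bar a, \bar t), \beta_i(\bar a, \bar t)$ over $R$.

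The problem thus reduces to detecting, by an $\mathcal{L}_0$-formula with Morley sequence parameters, whether at least one such cell meets $R$. The separation afforded by Corollary \ref{separation} is the key tool here: since $\bar a$ is separated from the various segments of $\bar t$ over $R$, each endpoint $\alpha_i(\bar a, \bar t)$ either already lies in $R$ or realizes over $R$ a type directly related to $p|_R$ (either $p|_R$ itself or its reverse $q|_R$, in the notation of Proposition \ref{equivalent to star}). Case-splitting on where $\alpha$ and $\beta$ sit with respect to $R$ and to $p|_R$ lets one rewrite ``$(\alpha, \beta)\cap R \neq \emptyset$'' as a comparison of $\alpha$ and $\beta$ against further elements $t_{n+1}, t_{n+2}$ of an extended Morley sequence: for instance, if $\alpha$ realizes $p|_R$ and $\beta$ lies above $R^{>C(R)}$, then the interval meets $R$ iff in $\widetilde R$ one has $\alpha < t_{n+1} < \beta$ for a generic extension of the Morley sequence. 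By Lemma \ref{superstructures}, the extended sequence still has property $\ast$, so the resulting formula is legitimate.

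The hard part -- the main obstacle -- is carrying out this case analysis cleanly and doing the bookkeeping that ensures the Morley sequence parameters introduced across the induction (between Boolean combinations and within the case analysis for the existential step) can all be organized into a single Morley sequence in $p|_R$. This is precisely where Corollary \ref{separation} and Lemma \ref{superstructures} play their essential roles, and where the hypothesis that $p|_R$ has property $\ast$ is indispensable: without $\ast$, the Morley sequence need not be symmetric or extendible in the ways required for the above rewriting to succeed.
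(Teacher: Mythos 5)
Your induction scheme and the atomic/Boolean bookkeeping match the paper in spirit, but the existential step is handled in a genuinely different way, and that step contains a real gap.

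The paper's argument for the existential quantifier is the Baisalov--Poizat doubling trick: it replaces $\tau_\psi(\underline{x},x_0,t_1,\dots,t_n)$ with
\[
\exists x_0 \bigl(\tau_\psi(x_0,\underline{x},t_1,\dots,t_n)\wedge\tau_\psi(x_0,\underline{x},t_{n+1},\dots,t_{2n})\bigr),
\]
using two consecutive blocks of a single longer Morley sequence. If an $R$-witness exists, it works for both blocks by $t_1\dots t_n \equiv_R t_{n+1}\dots t_{2n}$. If only a $\widetilde{R}$-witness $\tilde r_0$ exists, then $\tilde r_0$ must be ``close to'' (in the dcl or in an interval defined over) both $Rt_1\dots t_n$ and $Rt_{n+1}\dots t_{2n}$, and Fact \ref{cut} plus Proposition \ref{equivalent to valuational} show the relevant interval endpoints are not in $R$; separation (Corollary \ref{separation}) of the two blocks then gives the contradiction. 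Crucially, this never needs to identify which cut over $R$ the endpoints realize -- only that it is the \emph{same} nondefinable cut for the two blocks, which separation rules out.

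Your approach tries to decide directly whether a cell $(\alpha_i(\bar a,\bar t),\beta_i(\bar a,\bar t))$ meets $R$, and the pivotal claim is that each endpoint ``either already lies in $R$ or realizes over $R$ a type directly related to $p|_R$ (either $p|_R$ itself or its reverse $q|_R$).'' This is false. By Fact \ref{cut} all one knows is that an endpoint $\alpha_i(\bar a,\bar t)\in\dcl(R\bar t)\setminus R$ realizes \emph{some} nondefinable 1-type over $R$; as $\bar a$ ranges over $R^k$ this type can range over infinitely many distinct cuts, not just the cut corresponding to $C$. (For instance, for $r\in R$ the element $r+t_1$ realizes the cut $r+p|_R$, which is a different cut from $p|_R$ as soon as $r$ is large relative to the valuational $\epsilon$.) Consequently the proposed finite case-split on ``where $\alpha$ and $\beta$ sit with respect to $R$ and to $p|_R$'' is not a legitimate first-order dichotomy, and comparing $\alpha$ against a freshly added $t_{n+1}$ does not detect whether $(\alpha,\beta)$ meets $R$: the new $t_{n+1}$ realizes $p|_{R\bar t}$, which measures position relative to $R\bar t$, not which cut over $R$ the endpoint occupies. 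So the core of the existential step does not go through as written. The fix is precisely the doubling trick, which avoids ever needing to name the cut.
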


\begin{proof}
Let $\underline{x}=(x_1 ,\dots ,x_k )$.  
 It suffices to show that if $\varphi(x_0,\underline{x})$ is the trace of the $\mathcal{L}_0$-formula $\tau_{\varphi}(x_0, \underline{x}, t_1, \dots, t_n)$, then $\exists x_0 \varphi(x_0,\underline{x})$ is the trace of 
$$\tau_{\exists x_0 \varphi} = \exists x_0 (\tau_{\varphi}(x_0, \underline{x}, t_1, \dots, t_n) \land \tau_{\varphi}(x_0, \underline{x}, t_{n+1}, \dots, t_{2n})),$$ 
where $t_1, \dots, t_{2n}\in \widetilde{R}$ form a Morley sequence in $p$ over $R$.  Suppose $r_1, \dots, r_k \in R$ are such that there is $r_0\in R$ with $\varphi(r_0,r_1 , \dots, r_k)$.  Then \[\widetilde{R}\models \tau_{\varphi}(r_0, r_1, \dots, r_k, t_1, \dots, t_n),\] and since $t_1, \dots, t_n \equiv_R t_{n+1}, \dots, t_{2n}$, one has \[ \widetilde{R} \models \tau_{\varphi}(r_0, r_1, \dots, r_k, t_{n+1}, \dots, t_{2n})\] as well.

Now suppose that there is no $r_0\in R$ such that $\phi (r_0 ,r_1 , \dots ,r_k )$ and assume for a contradiction that there is $\tilde r_0\in \widetilde{R}$ with  $$\widetilde{R} \models \tau_{\varphi}(\tilde r_0, r_1, \dots, r_k, t_1, \dots, t_n) \land \tau_{\varphi}(\tilde r_0, r_1,\dots, r_k, t_{n+1}, \dots, t_{2n}).$$ 
We consider $\tp( \tilde r_0/Rt_1, \dots ,t_n)$.  Either $\tilde r_0 \in \dcl(Rt_1, \dots, t_n)$ or there is an interval $(a, b)$ containing $\tilde r_0$ with $a, b \in \dcl(Rt_1, \dots t_n)$ (possibly equal to $-\infty, \infty$) such that any $s \in (a,b)$ is such that $\tau_{\varphi}(s, r_1, \dots, r_k, t_1, \dots, t_n)$.  Note that as we are supposing that there is no element of $R$ in that interval, the entire interval realizes a single type $q\in S_1 (R)$.  Choose $\tilde r_1\in (a,b)$ with $\tilde r_1\in \dcl(Rt_1, \dots t_n)$.  As $p|_R$ is valuational, $\tp(t_i/Rt_1\dots t_{i-1})$ is a cut, by Proposition \ref{equivalent to valuational}.  This allows us to apply Proposition \ref{cut} to $\tilde r_1$, showing that $q$ is not a definable type.  Thus neither $a$ nor $b$ can be in $R$ or equal to $-\infty, \infty$.

Likewise when considering $\tp( \tilde r_0/Rt_{n+1}, \dots t_{2n})$ we obtain either $\tilde r_0\in \dcl(Rt_{n+1}, \dots t_{2n})$ or the existence of an interval $(c, d)$ analogous to $(a,b)$ above.  

Now note that we have contradicted the separation of $t_1, \dots, t_n$ from $t_{n+1},\dots, t_{2n}$ (which follows from Corollary \ref{separation}):  For either there is an element not in $R$ common to their definable closures, or else there is an element of the definable closure of one contained in an interval defined over the other.
\end{proof}

\begin{remark}\label{poizat_uniformity}
It is easy to see, inspecting the proof of Proposition \ref{poizat}, that the construction of $\tau_\varphi$ is uniform in the sense that it is independent of $p$, $R$, and $\widetilde{R}$.  To be precise, let $R_1 \preceq \widetilde{R}_1$ (in the sense of $\mathcal{L}_0$) with $\widetilde{R}_1$ sufficiently saturated, and let $S_1$ and $\widetilde{S}_1$ denote the realizations of $\theta(x_1, \dots, x_k,s_1, \dots s_n)$ in $S_1$ and $\widetilde{S}_1$ respectively with $s_1, \dots, s_n\in \widetilde{S}_1$ a Morley sequence in  $p_1|_{R_1}$, where $p_1$ is a global 1-type invariant over $\emptyset$ such that $p_1|_{R_1}$ has property $\ast$.  Then given any $\varphi$, the proof above creates the same $\tau_{\varphi}$ for $R_1$ and $p_1$ as it does for $R$ and $p$.
\end{remark}

Note that the uniformity observed in Remark \ref{poizat_uniformity} translates easily into a fact about elementary substructures.  Namely, we have the following lemma.

\begin{corollary} \label{elementary extension}
 Consider $R$ and $R_1$ as in Proposition \ref{poizat} where in addition $R\subseteq R_1$, $\widetilde{R} = \widetilde{R}_1$. Consider  $p=p_1$ invariant over $R$ such that $p|_R$ and $p|_{R_1}$ have property  $\ast$, and moreover $s_1,\dots, s_n = t_1, \dots, t_n$. Then $R \preceq R_1$ as $\mathcal{L}$-structures.  
\end{corollary}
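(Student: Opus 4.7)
The plan is to deduce $R \preceq R_1$ from Proposition \ref{poizat} together with the uniformity observation in Remark \ref{poizat_uniformity}. Concretely, I want to show that for every $\mathcal{L}$-formula $\varphi(\underline{x})$ with parameters from $R$ and every tuple $\underline{r}\in R^k$, one has $(R,S)\models \varphi(\underline{r})$ iff $(R_1,S)\models \varphi(\underline{r})$, which is exactly the Tarski--Vaught condition for $R \preceq R_1$ in $\mathcal{L}$.

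First I would set up coherence of the $\mathcal{L}$-structures. Since $R \subseteq R_1$ in $\mathcal{L}_0$, both $R$ and $R_1$ are elementary substructures of the common saturated $\widetilde{R}=\widetilde{R}_1$, and by hypothesis the very same tuple $t_1,\dots,t_n$ is a Morley sequence in $p|_R$ and in $p_1|_{R_1}=p|_{R_1}$. The predicate $S$ on each of $R$ and $R_1$ is interpreted as the trace in that structure of the single $\mathcal{L}_0$-formula $\theta(\underline{x},t_1,\dots,t_n)$, so in particular $S^R = S^{R_1}\cap R^k$ and the inclusion $R\subseteq R_1$ already holds at the level of $\mathcal{L}$-structures.

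Next, fix any $\mathcal{L}$-formula $\varphi(\underline{x})$ with parameters from $R$. Applying Proposition \ref{poizat} to the data $(R,\widetilde{R},p,t_1,\dots,t_n)$, property $\ast$ for $p|_R$ together with the Morley-sequence hypothesis supplies an $\mathcal{L}_0$-formula $\tau_\varphi$ (with extra parameters the Morley sequence) such that
\[
\varphi(R) \;=\; \tau_\varphi(\widetilde{R})\cap R^k.
\]
Applying the same proposition to the data $(R_1,\widetilde{R}_1,p_1,s_1,\dots,s_n)$, which is legitimate because $p|_{R_1}$ also has property $\ast$, produces some $\tau^{(1)}_\varphi$ with $\varphi(R_1)=\tau^{(1)}_\varphi(\widetilde{R})\cap R_1^k$. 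By Remark \ref{poizat_uniformity}, the construction of $\tau_\varphi$ in the proof of Proposition \ref{poizat} is purely syntactic in $\varphi$ and its Morley parameters, and since $p_1=p$ and $s_i=t_i$, we get $\tau^{(1)}_\varphi = \tau_\varphi$. Therefore, for any $\underline{r}\in R^k \subseteq R_1^k$,
\[
(R,S)\models \varphi(\underline{r}) \;\iff\; \widetilde{R}\models \tau_\varphi(\underline{r}) \;\iff\; (R_1,S)\models \varphi(\underline{r}),
\]
as required.

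The only real subtlety is ensuring that the hypotheses truly activate the uniformity statement: one needs that (i) the same Morley sequence $t_1,\dots,t_n$ validly serves as the parameter block on both the $R$-side and the $R_1$-side (guaranteed by assuming property $\ast$ for both $p|_R$ and $p|_{R_1}$, together with $s_i=t_i$), and (ii) the inductive construction of $\tau_\varphi$ in Proposition \ref{poizat} does not quietly depend on anything tied to the base structure beyond the Morley parameters themselves. Once these are noted, the corollary follows by a clean comparison of traces in $\widetilde{R}$.
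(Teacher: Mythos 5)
Your argument is correct and follows essentially the same path as the paper's: apply Proposition \ref{poizat} to $R$ and to $R_1$, invoke Remark \ref{poizat_uniformity} to get the same trace formula $\tau_\varphi$ on both sides, and compare over $\widetilde{R}$. One minor slip: what you verify is the definition of elementary substructure directly, not the Tarski--Vaught test (the paper does invoke Tarski--Vaught and so only needs to track existential formulas), and the paper explicitly adds constants for $R$ to meet the $\emptyset$-invariance hypothesis in Proposition \ref{poizat}, a small technical point you elide; neither affects correctness.
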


\begin{proof}
 We may add constants for $R$ to $\mathcal{L}$ to make $p$ invariant over $\emptyset$ so that we may apply Proposition \ref{poizat}. Suppose that $R_1 \models \exists x \varphi(x, \vec r)$ where $\vec r \in R^m$.  This happens if and only if 
 $\vec r\in \tau_{\exists x \varphi}(\widetilde{R}) \cap R_{1}^{m}$, where $\tau_{\exists x \varphi}$ is the $\widetilde{R}$-definable set whose trace is $\exists x \varphi(x, \vec R_{1}^{m})$.  But the set $\exists x \varphi(x, R^m)$ is also the trace of $\tau_{\exists x \varphi}$.  Obviously $\vec r\in \tau_{\exists x \varphi}(\widetilde{R}) \cap R^m$.  Thus $R \models \exists x \varphi(x, \vec r)$, and we are done by the Tarski-Vaught test.
\end{proof}

We may now use this corollary to prove a slightly strengthened version (in that we only need to assume the cut is valuational, rather than an actual valuation ring) of Theorem 3.3 of \cite{mc}.  

First note that if $R\langle a \rangle$ is a superstructure of $R$ (as an $\mathcal{L}$-structure) and there is an element (which we may assume to be $a$) in $C^{>R}$ with $f(a)$ in the same cut in $R$ as $a$ but with $f(a)$ greater than $C$, then we cannot hope to satisfy the hypotheses of Corollary \ref{elementary extension}.  In particular, the requirement that $s_1 \dots, s_n = t_1, \dots, t_n$ can not be satisfied.  After all, $C$ is the trace of $x<t$ and $t$ must lie between $a$ and $f(a)$, but an $R$-invariant global type whose restriction to $R$ is the cut of $a$ in $R$ would produce a Morley sequence either less than $a$ or greater than $f(a)$.  Thus this Morley sequence could not include $t$.

\smallskip

\noindent However, this is the only obstruction:

\begin{lemma}\label{lemma3.2}[Lemma 3.2 \cite{mc}]  There is an $R$-invariant global type $q$ with $q|_R$ being the cut in $R$ given by $C$ and $q|_{\ra}$ being the cut in $\ra$ given by $C$, and there is a finite Morley sequence $t_1, \dots, t_n$  in $q$ over $\ra$ (and hence over $R$) in each of the following cases:
\begin{itemize}
 \item[$a)$]  $a$ realizes the cut in $R$ given by $C$ and $C(\ra)$ is the convex hull of $C(R)$ in $\ra$. 
\item[$b)$]   $a$ realizes the cut given by $C$ and $C(\ra)=\{  x\in \ra\;|\; x<R^{>C(R)}  \}.$  
\item[$c)$]   $a$ is such that $\dcl_R (a)$ does not realize the cut given by $C$ (hence $C(\ra)$ is the convex hull of $C$ in $\ra$ and $C(\ra)=\{  x\in \ra\;|\; x<R^{>C(R)}  \}$).
\end{itemize}
\label{elext}
\end{lemma}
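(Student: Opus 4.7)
The plan is to treat each of the three cases separately, identifying in each case an appropriate $R$-invariant global 1-type $q$ from among the two candidates extending the cut $C$ in $R$: the type $p_L$ implied by $\{x > c : c \in C(R)\} \cup \{x < r : r \in \mathcal{R}, r > C(R)\}$ (approaching the cut from the left in the sense of Proposition \ref{equivalent to star}), and the type $p_R$ implied by $\{x < r : r \in R^{>C(R)}\} \cup \{x > r : r \in \mathcal{R}, r < R^{>C(R)}\}$ (approaching from the right). Both restrict to the cut of $C$ in $R$, so the content is to pick the right one so that $q|_{R\langle a \rangle}$ is the cut of $C$ in $R\langle a \rangle$.

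For case (a), since $C(R\langle a \rangle)$ is the convex hull of $C(R)$ in $R\langle a \rangle$, an element $x \in \mathcal{R}$ realizes the cut of $C$ in $R\langle a \rangle$ iff it is above every element of $R\langle a \rangle$ below some element of $C(R)$ and below every element of $R\langle a \rangle$ above $C(R)$; a direct unwinding shows this matches $p_L|_{R\langle a \rangle}$, so I take $q = p_L$. For case (b), where $C(R\langle a \rangle) = \{x \in R\langle a \rangle : x < R^{>C(R)}\}$, the cut in $R\langle a \rangle$ consists of elements above every $R\langle a \rangle$-element below $R^{>C(R)}$ and below every $R\langle a \rangle$-element $\geq$ some $r \in R^{>C(R)}$, which matches $p_R|_{R\langle a \rangle}$, so I take $q = p_R$. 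For case (c), since no element of $R\langle a \rangle$ realizes the cut between $C(R)$ and $R^{>C(R)}$, both $p_L$ and $p_R$ restrict to the cut of $C$ in $R\langle a \rangle$, and I take, say, $q = p_L$.

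The existence of a finite Morley sequence $t_1, \ldots, t_n$ in $q$ over $R\langle a \rangle$ is then immediate by the definition of a Morley sequence in a global invariant type: pick $t_1 \models q|_{R\langle a \rangle}$ and, having chosen $t_1, \ldots, t_i$, pick $t_{i+1} \models q|_{R \langle a, t_1, \ldots, t_i \rangle}$. Since $R\langle a \rangle$ is small, $\mathcal{R}$ realizes each of these restrictions, and since $t_1, \ldots, t_n \in R\langle a, t_1, \ldots, t_n \rangle$, the sequence is Morley over $R$ as well. To make the sequence useful in the proof of Theorem \ref{elextnew}, I would also invoke Lemma \ref{superstructures}: case (a) fits its condition (1) with $a$ a length-one Morley sequence in $p_L$; case (b) fits condition (2) with $a$ a length-one Morley sequence in $p_R$; and case (c) fits condition (3) directly since $\dcl_R(a) \cap p_L|_R = \emptyset$. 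Hence property $\ast$ is inherited by $(R\langle a\rangle, C)$ and the Morley sequence inherits the symmetry/separation consequences derived from $\ast$.

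The only real obstacle is the book-keeping of left versus right approach to the cut and matching it against the two possible configurations in which $(R\langle a \rangle, C)$ can sit over $(R,C)$; once cases (a), (b), (c) are unravelled this is mechanical, with the single interesting verification being that in case (b) the element $a$ does form a Morley sequence in the \emph{right}-approach type $p_R$ (which is just the observation that $a$ realizes the cut and is thus below $R^{>C(R)}$ and above every $\mathcal{R}$-element that is $<R^{>C(R)}$ and belongs to $R$).
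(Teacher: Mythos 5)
Your proposal is correct and takes essentially the same approach as the paper: the paper's proof consists precisely of choosing the ``left'' invariant extension of the cut in case (a), the ``right'' invariant extension in case (b), and noting that either works in case (c). You merely spell out the (routine) verifications and add forward-looking remarks about Lemma \ref{superstructures} that the paper leaves for the application.
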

\begin{proof} In a), we may take $q$ to be the global type implied by $$\{x>r\;|\;r\in C(R) \} \cup \{x<r\;|\; r\in \mathcal{R} \textrm{ and } r>C(R)\}.$$  In b), we let $q$ be the global type implied by $$\{x<r \;|\; r\in R^{>C(R)}\} \cup \{x>r \;|\; r \in \R \textrm{ and }r< R^{>C(R)} \}.$$  In c) we may do either.

\end{proof}
\noindent Now we obtain our criterion for elementary extensions:

{
\renewcommand{\thetheorem}{\ref{elextnew}}
\begin{theorem}\label{eext}[Theorem 3.3 \cite{mc}] Let $a\in \mathcal{R}$ and assume that $(R,C)$ has property $\ast$. Then $(\ra ,C)$ fails to be an elementary extension of $(R,C)$ if and only if there are $R$-definable one-variable functions $f$ and $g$ with $C(R)<f(a),g(a)<R^{>C(R)}$ and $f(a)\in C$ and $g(a)>C$.
\end{theorem}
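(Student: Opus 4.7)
The plan is to prove the two directions of the equivalence separately. The nontrivial one is the forward direction (failure of elementary extension $\Rightarrow$ existence of $f, g$), which I prove by contrapositive via Lemma \ref{elext} and Corollary \ref{elementary extension}. The reverse direction is an o-minimality/monotonicity argument resting on Fact \ref{decreasing}.

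For the forward direction, suppose no $f, g$ as in the statement exist; we show $(R,C) \preceq (R\langle a \rangle, C)$. We may take $a \notin R$. If $\dcl_R(a)$ contains no realization of the cut defined by $C$, Lemma \ref{elext}(c) produces an $R$-invariant global type $q$ with $q|_R$ equal to this cut, together with a finite Morley sequence $t_1, \dots, t_n$ in $q$ over $R\langle a \rangle$; in particular, $C$ is the trace over both $R$ and $R\langle a \rangle$ of the $\mathcal{L}_0$-formula $x < t_1$. Lemma \ref{superstructures}(3) gives that the cut in $R\langle a \rangle$ also has property $\ast$, so Corollary \ref{elementary extension} applies to conclude. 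If on the other hand $\dcl_R(a)$ does realize the cut, pick $b \in R\langle a \rangle \setminus R$ realizing it; by o-minimal dimension $R\langle b \rangle = R\langle a \rangle$, so we may assume $a$ itself realizes the cut. The nonexistence of $f, g$ then forces the realizations of the cut inside $R\langle a \rangle$ to lie entirely in $C$ or entirely outside $C$, which are exactly cases (b) and (a) of Lemma \ref{elext}. In each subcase, Lemma \ref{superstructures}(2) or (1) transfers property $\ast$, and Corollary \ref{elementary extension} gives $(R,C) \preceq (R\langle a \rangle, C)$.

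For the reverse direction, assume $f, g$ exist as in the theorem. By o-minimality of $\mathcal{L}_0$, fix an interval $I = (r_1, r_2)$ with $r_1, r_2 \in R \cup \{-\infty, +\infty\}$ containing $a$ on which both $f$ and $g$ are continuous and strictly monotone; note that $I$ necessarily contains all realizations of the cut. Assume for contradiction that $(R,C) \preceq (R\langle a \rangle, C)$. The $\mathcal{L}$-formula $r_1 < x < r_2 \wedge C(f(x)) \wedge \neg C(g(x))$ holds at $a$ in $R\langle a \rangle$, and hence at some $r \in R \cap I$, giving $f(r) \in C(R)$ and $g(r) \in R^{>C(R)}$. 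If $f$ is decreasing on $I$, observe that $f(a') \in R$ for $a'$ realizing the cut would force $a' = f^{-1}(f(a')) \in R$, impossible; hence $f$ maps cut-realizations into $\mathcal{R} \setminus R$, and by continuity, convexity, and $f(a) \in (C(R), R^{>C(R)})$, this image lies in the gap itself. Thus $f$ restricts to a continuous decreasing self-map of the cut realizations, yielding a fixed point realizing the cut (by the o-minimal intermediate value theorem) and contradicting Fact \ref{decreasing}. The case of $g$ decreasing is symmetric. If both $f, g$ are increasing on $I$, then $r \neq a$; $r < a$ gives $g(r) < g(a)$, contradicting $g(r) \in R^{>C(R)} > g(a)$, while $r > a$ gives $f(r) > f(a)$, contradicting $f(r) \in C(R) < f(a)$.

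The main obstacle is the careful bookkeeping in the forward direction: one must verify that the Morley sequence supplied by Lemma \ref{elext} interprets $C$ via the same $\mathcal{L}_0$-formula in both $R$ and $R\langle a \rangle$, so that Corollary \ref{elementary extension} genuinely applies, and that the appropriate case of Lemma \ref{superstructures} propagates property $\ast$. The reduction (in the case $\dcl_R(a)$ realizes the cut) to $a$ itself realizing the cut, via the o-minimal dimension identity $R\langle b \rangle = R\langle a \rangle$, is a further delicate step.
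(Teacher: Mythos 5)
Your forward direction is essentially the paper's own: invoke Lemma \ref{lemma3.2} to produce an $R$-invariant type $q$ whose restrictions to $R$ and to $\ra$ are both the cut given by $C$, invoke Lemma \ref{superstructures} to propagate property $\ast$, and conclude via Corollary \ref{elementary extension}. Unfolding the case distinction (which Lemma \ref{lemma3.2} already handles internally) and the exchange-property reduction when $\dcl_R(a)$ meets the cut are both correct, if somewhat more verbose than needed.

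The reverse direction has a genuine gap. You assert that the interval $I$ around $a$ with $R$-boundary ``necessarily contains all realizations of the cut,'' and you dispose of the possibility that $f$ (respectively $g$) is decreasing on $I$ by treating it as a decreasing definable self-map of the cut realizations, obtaining a fixed point and contradicting Fact \ref{decreasing}. Both steps quietly assume that $a$ itself realizes the cut corresponding to $C$. But the hypothesis of the theorem only forces $f(a)$ and $g(a)$ to realize that cut; $a$ may realize an entirely different cut $p_0$ over $R$. In that case $I$ is an $R$-interval surrounding the realizations of $p_0$, not of $C$, and $f$ restricted to $I$ sends $p_0$-realizations to $C$-realizations --- it is not a self-map of any type, and Fact \ref{decreasing} does not rule out its being decreasing. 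Consequently your closing case analysis (``both $f,g$ increasing,'' then $r<a$ or $r>a$) is not exhaustive: the configurations in which exactly one of $f,g$ is decreasing on $I$ escape the contradiction (for instance $r>a$ with $f$ decreasing and $g$ increasing gives $f(r)<f(a)$ and $g(r)>g(a)$, both of which are consistent with $f(r)\in C(R)$ and $g(r)\in R^{>C(R)}$). The paper avoids the difficulty by composing: $h=g\circ f^{-1}$ sends $f(a)$, which realizes the cut, to $g(a)$, which also realizes the cut, so $h$ genuinely is a self-map of the cut realizations and Fact \ref{decreasing} forces it to be increasing on a definable interval around the cut; then $(R,C)\models\exists x\in C\; h(x)>C$ together with monotonicity yields $h(r)<h(f(a))=g(a)$ for some $r\in C(R)$ with $h(r)\in R^{>C(R)}$, contradicting $g(a)<R^{>C(R)}$. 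An alternative repair of your route is to first replace $a$ by $f(a)$, and $f,g$ by the identity and $g\circ f^{-1}$, noting $\ra=R\langle f(a)\rangle$ by exchange; once this reduction is made explicit, your monotonicity argument is valid, but it is less economical than the paper's single appeal to $h$.
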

\addtocounter{theorem}{-1}
}

\begin{proof}
First assume there are no functions $f, g$ as in the statement of the theorem.  Then we may apply Lemma \ref{superstructures} to obtain that $(\ra ,C)$ has property $\ast$.  The lack of such $f, g$ also allows us to apply Lemma \ref{lemma3.2} to find an $R$-invariant $q$ with $q|_R$ being the cut in $R$ given by $C$ and $q|_{\ra}$ being the cut in $\ra$ given by $C$.  Now we have satisfied the hypotheses of Corollary \ref{elementary extension}, and conclude that $(R,C)\preceq (\ra, C)$.

Now assume that there are such functions $f$ and $g$, and define $h$ to be $g \circ f^{-1}$.  Note that since $h$ maps $f(a)$ to $g(a)$, $h$ maps the cut in $R$ corresponding to $C$ to itself.  Thus $h$ is increasing (see Fact \ref{decreasing}) on an $R$-definable interval containing this cut.  Thus  \[(\ra, C)\models \textrm{for all sufficiently large } x \in C,\; h(x)>C,\] hence \[(R,C) \models \textrm{for all sufficiently large } x\in C, \; h(x)>C. \] Take $r\in C(R)$ with $h(r)\in R^{>C(R)}$.    But $r<f(a)$ while $h(r)>h(f(a))$, a contradiction to $h$ being increasing.
\end{proof}
 
\end{section}

\begin{section}{Substructures of models of $T$ and quantifier elimination}\label{substrqesection}

In this section, we obtain our quantifier elimination, universal axiomatization, and definable Skolem function results.  The key result is that for a suitable $(R_0,C)\preceq (R,C)$, any single element $a\in R$ results in $(R_0,C)\preceq (R_0\langle a\rangle, C)\preceq(R,C)$.  We will obtain rather easily that $(R_0,C)\preceq (R_0\langle a\rangle, C)$, but to continue to inductively build up to $(R,C)$ we will also need that $(R_0\langle a\rangle, C)\preceq(R,C)$, and this will take more work, in particular in the case where $(R_0\langle a\rangle, C)$ contains no elements realizing the cut in $R_0$ corresponding to $C$.

\begin{lemma}\label{basestep} 
Suppose  $(R_0 , C )\preceq (R,C)$, $C$ defines a cut in $R_0$ with property $\ast$, and $a\in R\setminus R_0$. 
Then $(R_0 , C ) \preceq (R_0\langle a \rangle , C )$.  
\end{lemma}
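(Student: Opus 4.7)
The plan is to apply Theorem \ref{elextnew} (which is available because the cut $C$ in $R_0$ has property $\ast$) in its contrapositive direction, and derive a contradiction using the elementarity $(R_0, C) \preceq (R, C)$. Assuming for contradiction that $(R_0, C) \not\preceq (R_0\langle a \rangle, C)$, Theorem \ref{elextnew} will produce, after the WLOG in its statement, an element of $C(R_0\langle a \rangle)$ above $C(R_0)$ (which I relabel $a$) and an $R_0$-definable function $f$ with
$$C(R_0\langle a \rangle) < f(a) < R_0^{>C(R_0)}.$$

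From this, the first step is to observe that $a$ and $f(a)$ realize the same $\mathcal{L}_0$-type over $R_0$, namely the cut corresponding to $C(R_0)$, while $a \in C(R)$ and $f(a) \notin C(R)$: this uses $C(R_0\langle a \rangle) = C(R) \cap R_0\langle a \rangle$ together with the easy inclusion $R_0^{>C(R_0)} \subseteq R^{>C(R)}$ (which forces $a < R_0^{>C(R_0)}$ since $a \in C(R)$). I then invoke o-minimality of $R_0$ to pick an $R_0$-definable interval $I$ containing every realization of this cut on which $f$ is continuous and strictly monotone. Because $f$ carries cut realizations to cut realizations, Fact \ref{decreasing} rules out $f|_I$ being decreasing, so $f$ is increasing on $I$.

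The final step is to pull the obstruction down to $R_0$ via elementarity. The $\mathcal{L}$-formula
$$\varphi(x) \;:=\; (x \in I) \wedge C(x) \wedge \neg C(f(x))$$
has its parameters in $R_0$ and is witnessed in $(R, C)$ by $a$; elementarity therefore provides $r \in R_0$ with $\varphi(r)$, i.e.\ $r \in I \cap C(R_0)$ and $f(r) > C(R_0)$. Since $r < a$ inside $I$ and $f$ is increasing on $I$, we get $f(r) < f(a) < R_0^{>C(R_0)}$; combined with $f(r) > C(R_0)$ this says the element $f(r) \in R_0$ realizes the cut, contradicting its non-definability over $R_0$.

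The only slightly delicate point is confirming that the interval $I$ can be chosen $R_0$-definably so that it contains every realization of the cut (and therefore contains both $a$ and the witness $r$). This is routine: by the o-minimal monotonicity theorem $f$ is continuous and monotone on each piece of some finite $R_0$-definable partition, and since all realizations of the cut form a single non-isolated 1-type over $R_0$ they lie in a single piece of any such partition. Beyond this bookkeeping I expect no real obstacle.
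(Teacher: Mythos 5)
Your proof is correct and follows essentially the same approach as the paper's: assume the conclusion fails, apply Theorem \ref{elextnew} to produce an increasing $R_0$-definable function $f$ mapping a realization of the cut in $C(R_0\langle a\rangle)$ above $C$, and then contradict $(R_0,C)\preceq(R,C)$ via the $\mathcal{L}_{R_0}$-sentence asserting that some $x\in C$ has $f(x)\notin C$. The only difference is organizational — the paper shows directly that $(R_0,C)$ cannot satisfy this sentence (since $f$ increasing forces $f(C(R_0))\subseteq C(R_0)$), whereas you pull a witness $r\in R_0$ down by elementarity and derive the contradiction from the location of $f(r)$.
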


\begin{proof}  
If not, then, by Theorem \ref{elextnew}, we may assume that $a$ realizes the type
\[C(R_0)<x<R_{0}^{>C(R_0)},\]
and there is an $\mathcal{L}_0$-definable over $R_0$ function $f$ such that $f(a)$ also realizes the same type, and moreover $a\in C(R_0\langle a \rangle)$ and $f(a)>C(R_0\langle a \rangle)$.  Hence \[(R,C)\models \exists x \in C \; f(x)>C.\]  By Fact \ref{decreasing}, $f$ is increasing, and so $f(C(R_0 ))\subseteq C(R_0 )$ and $f(R_{0}^{>C(R_0 ) }) \subseteq R_{0}^{>C(R_0 )
}$, contradicting $(R_0 , C )\preceq (R,C)$. 
\end{proof}

\begin{lemma}\label{mainlemma}
Suppose $(R_0 ,C)\preceq (R,C)$, and suppose $C$ defines a cut in $R_0$ with property $\ast$. Let $a_1 , \dots ,a_n \in R$ and set $R_n =R_0\langle a_1, \dots, a_n \rangle$ and $C_n = C(\mathcal{R}) \cap R_n$.  
Then $(R_0 , C) \preceq (R_n, C)$.  Moreover, suppose $a_1, \dots, a_n $ is an independent tuple such that, for some $m\leq n$, $a_{i+1} \in C_{i+1}^{>C_i} $ for each $i\leq m$, and $C_m$ is cofinal in $C_n$.  Then for all $i$, $(R_i, C) \preceq (R_{i+1},C)$.
\end{lemma}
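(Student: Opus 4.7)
The plan is to prove the ``moreover'' statement by induction on $i$, then reduce the main statement to it. The central tool is Lemma \ref{basestep}, which gives $(R_i, C) \preceq (R_{i+1}, C)$ whenever $C$ defines a cut in $R_i$ with property $\ast$; to propagate this hypothesis through the chain $R_0 \subseteq R_1 \subseteq \cdots \subseteq R_n$ we use Lemma \ref{superstructures}.

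For the moreover part, I would induct on $i$, verifying at each step that $C$ defines a cut in $R_i$ with property $\ast$ and then invoking Lemma \ref{basestep}. When $i \leq m$, the segment $a_1, \dots, a_i$ is an independent sequence of $C$-elements each strictly above the preceding $C$-layer; this configuration identifies the sequence as a Morley sequence over $R_0$ in an $R_0$-invariant global type extending the cut whose cut-realizations lie in $C(\mathcal{R})$, placing us within condition (2) of Lemma \ref{superstructures}. When $m < i \leq n$, the cofinality of $C_m$ in $C_n$ forces $\dcl_{R_m}(a_{m+1}, \dots, a_i) \cap p|_{R_m} = \emptyset$, since any element in that intersection would be a new $C$-element strictly above $C_m$, contradicting cofinality; hence Lemma \ref{superstructures}(3) over $R_m$ applies. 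That $C$ defines a cut in $R_i$ (not merely that $p|_{R_i}$ is nondefinable) follows because the cut is valuational -- a consequence of property $\ast$ via Proposition \ref{counterexamplesym} -- and Proposition \ref{equivalent to valuational} ensures valuationality persists under Morley-sequence extensions.

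For the main statement with an arbitrary tuple $a_1, \dots, a_n \in R$, I would extend $a_1, \dots, a_n$ to a tuple $b_1, \dots, b_N \in R$ satisfying the moreover hypotheses, arranged so that $R_n = R_0\langle b_1, \dots, b_j\rangle$ for some prefix index $j$. The extension is constructed by placing the $a_i$'s (possibly reordered and interspersed with supplementary $C$-elements from $R$ to guarantee that successive $C$-elements satisfy the $C_{k+1}^{>C_k}$ condition) as the initial segment, then appending further supplementary elements to achieve the overall cofinality requirement; such supplementary elements exist inside $R$ by $(R_0, C) \preceq (R, C)$. The moreover part applied to the $b$-chain then produces $(R_0, C) \preceq (R_n, C)$ as an intermediate link. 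The main obstacle is twofold: precisely identifying the $R_0$-invariant global type relevant at the moreover step $i \leq m$ (since the $a_i$'s sit within $C(\mathcal{R})$, this type is neither the $p$ nor the $q$ of Lemma \ref{superstructures} in their extremal form, and matching our situation to condition (2) demands care), and executing the reduction construction so that the moreover hypotheses and the prefix-generation property are simultaneously satisfied.
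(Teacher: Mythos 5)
Your plan hinges on applying Lemma \ref{basestep} to get $(R_i, C) \preceq (R_{i+1}, C)$ once property~$\ast$ is known for $C$ in $R_i$, but this misreads the hypotheses of Lemma \ref{basestep}: that lemma also requires $(R_i, C) \preceq (R, C)$, and this is precisely what is \emph{not} available in the inductive step. You only have $(R_0, C)\preceq (R_i, C)$ from the inductive hypothesis, not $(R_i, C)\preceq(R,C)$; the paper's proof flags exactly this issue in the line ``Note that we cannot repeat the argument of Lemma \ref{basestep} exactly, for while $(R_0,C)\preceq(R_i,C)$, we do not know $(R_i,C)\preceq(R,C)$.'' Looking inside the proof of Lemma \ref{basestep}, the hypothesis $(R_0,C)\preceq(R,C)$ is what lets one transfer the existential sentence $\exists x\in C\, f(x)>C$ (witnessed by $a\in R$) down to $(R_0,C)$, where it contradicts $f$ being increasing. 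For the inductive step, the witness function $f$ has parameters in $R_i$, so there is no elementary link over $R_i$ to exploit. Property~$\ast$ propagation via Lemma \ref{superstructures} does nothing to close this gap, because~$\ast$ is a property of the cut in $R_i$, not of the $\mathcal{L}$-embedding $(R_i,C)\hookrightarrow(R,C)$.

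The paper's actual argument in the inductive step is substantially more elaborate: it universally quantifies out the parameter positions $a_1,\dots,a_i$ in the offending formula to get an $\mathcal{L}_{R_0}$-sentence, transfers that sentence from $(R,C)$ to $(R_0,C)$ using the single available elementarity $(R_0,C)\preceq(R,C)$, then reinstantiates the quantifiers one at a time by elements $a'_1\in R_1, a'_2\in R_2,\ldots$ of the appropriate types, stepping forward through the already-established chain $(R_0,C)\preceq(R_1,C)\preceq\cdots\preceq(R_i,C)$, and finally reaches a contradiction with the monotonicity of $f$ and the coinitiality of $R_0^{>C_0}$ in $R_i^{>C_i}$ (the latter itself extracted from Theorem \ref{elextnew} at the earlier steps). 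None of this appears in your proposal. Your discussion of which clause of Lemma \ref{superstructures} applies and of the reduction to the ``moreover'' case is reasonable bookkeeping, but the central difficulty the lemma is designed to overcome---iterating a one-step criterion when the ambient elementarity is not inherited---is not addressed.
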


\begin{proof}
We may as well assume that $a_1 , \dots ,a_n$ are as in the ``Moreover, ..." part of the statement of the lemma.  
Note that once we have shown $(R_i , C)\preceq (R_{i+1},C)$, then, by Lemma \ref{elextnew} and Lemma \ref{superstructures}, we have that $C$ defines a cut in  $R_{i+1}$ which has property $\ast$, and hence we are at liberty to use Lemma \ref{elextnew} at this stage, too.

 We start by showing that $(R_i, C)\preceq(R_{i+1},C)$ for $i<m$, which we do by induction.

\smallskip

\noindent $\mathbf{i=0}$: This is Lemma \ref{basestep}.
\smallskip

\noindent \textbf{Inductive Step}: Towards a contradiction, assume there is an $R_0$-definable function $f$ such that $f(a_1 , \dots ,a_i, x_{i+1})$ maps the set of realizations of the type

\[C_i <x<R_{i}^{>C_i}\]
to itself and $f(a_1, \dots, a_i, a_{i+1})$ is greater than $C_{i+1}$.   Since each $a_{j+1}>C_j$ for $j<i$, we have, by Lemma \ref{elextnew}, that $R_{0}^{>C_0}$ is coinitial in $R_{i}^{>C_i}$.

Note that we cannot repeat the argument of Lemma \ref{basestep} exactly, for while $(R_0, C)\preceq (R_i, C)$, we do not know $(R_i, C)\preceq (R, C)$.  But as $(R_0, C)\preceq (R, C)$, we have that for any $r_{01} , \dots ,r_{0i} \in C_0$, 
$$ \begin{array}{ll}
(R,C) & \models  \exists x_{1} \in C^{>r_{01}} \dots  \exists x_{i} \in C^{>r_{0i}}\; x_1<\dots<x_i \\
& \exists y \in C^{>x_i} \, \forall x_{i+1}\in C^{>y} f(x_1, \dots, x_i, x_{i+1}) >C \\
& \textrm{and } f \textrm{ is increasing as a function of } x_{i+1} \textrm{ on }C^{>y}.
\end{array}
$$
Since $(R_0 , C ) \preceq (R,C)$, $(R_0, C)$ satisfies the same $\mathcal{L}_{R_0}$-formulas.  Thus it also satisfies
$$ \begin{array}{ll}
(R_0,C) & \models \forall z_1 \in C \dots \forall z_i \in C \, \exists x_{1} \in C^{>z_1 } \dots  \exists x_{i} \in C^{>z_i } x_1<\dots<x_i \\
& \exists y\in C^{>x_i} \, \forall x_{i+1}\in C^{>y} f(x_1, \dots, x_i, x_{i+1}) >C\\
& \textrm{and } f \textrm{ is increasing as a function of } x_{i+1} \textrm{ on }C^{>y}.
\end{array}
$$

\noindent Now since $(R_0 , C)\preceq (R_1, C)$, $(R_1 , C)$ satisfies the same sentence. So we may choose $a'_1\in C_{1}^{\geq a_1}$ (and thus $\tp_{\mathcal{L}_{0}}(a'_1/R_0)=\tp_{\mathcal{L}_{0}}(a_1/R_0)$) such that 

$$ \begin{array}{ll}
(R_1,C) & \models \forall z_2 \in C \dots \forall z_i \in C \,
\exists x_{2} \in C^{>z_2} \dots  \exists x_{i} \in C^{>z_i} a'_1<x_2<\dots<x_i \\
& \exists y\in C^{>x_i} \, \forall x_{i+1}\in C^{>y} f(a'_1, x_2, \dots, x_i, x_{i+1}) >C \\
& \textrm{and } f \textrm{ is increasing as a function of } x_{i+1} \textrm{ on } C^{>y}.
\end{array}
$$

\noindent Similarly, we may find $a'_2 \in C_{2}^{\geq a_2}$ (and thus $\tp_{\mathcal{L}_{0}}(a'_1 a'_2 /R_0)=\tp_{\mathcal{L}_{0}}(a_1 a_2 /R_0)$) such that 

$$ \begin{array}{ll}
(R_2,C) & \models \forall z_3 \in C \dots \forall z_i \in C \, \exists x_{3} \in C^{>z_3} \dots  \exists x_{i} \in C^{>z_i} a'_1<a'_2 <\dots<x_i \\
& \exists y\in C^{>x_i } \, \forall x_{i+1} \in C^{>y} f(a'_1, a'_2, x_3, \dots, x_i, x_{i+1}) >C \\
& \textrm{and } f \textrm{ is increasing as a function of } x_{i+1} \textrm{ on } C^{>y}.
\end{array}
$$

\noindent Continuing in this fashion, we find that 

$$ \begin{array}{ll}
(R_i,C) & \models \exists y \in C^{>a'_i} \, \forall x_{i+1}\in C^{>y} \, f(a'_1, \dots, a'_i, x_{i+1}) >C \\
& \textrm{and } f \textrm{ is increasing as a function of } x_{i+1} \textrm{ on } C^{>y}.
\end{array}
$$
\noindent Note that $\tp_{\mathcal{L}_0} (\underline{a}'/R_0)=\tp_{\mathcal{L}_0}(\underline{a}/R_0)$, where $\underline{a}'=(a'_1 , \dots , a'_i)$ and $\underline{a}=(a_1 ,\dots , a_i)$.  Furthermore, the above sentence is witnessed by $y=h(\underline{a}')$ for some $R_0$-definable function $h$.

Let $g$ be another $R_0$-definable function such that $h(\underline{a}')<g(\underline{a}')$ and $g(\underline{a}')\in C_i$ (so $f$ is increasing at $g(\underline{a'})$).  Since $f(\underline{a}', g(\underline{a}')) >C_i$, $f(\underline{a}', g(\underline{a}')) >r_0$ for some $r_0 \in R_{0}^{>C_0}$ by coinitiality of $R_{0}^{>C_0 }$ in $R_{i}^{>C_i}$. Then $g(\underline{a})<a_{i+1}$ (again by coinitiality of $R_{0}^{>C_0 }$ in $R_{i}^{>C_i}$) but $f(\underline{a},a_{i+1})<r_0 <f(\underline{a},g(\underline{a}))$, a contradiction with $f$ being increasing in $x_{i+1}$ on an interval containing $g(\underline{a})$ and $a_{i+1}$. This finishes the inductive argument.

Finally $(R_m, C)\preceq (R_n, C)$, by Theorem \ref{elextnew}.
\end{proof}

\begin{remark}\label{reverse order}
Note that instead of choosing $a_1, \dots, a_m\in C_n$ successively larger realizations of the type $C_0 <x<R_{0}^{>C_0}$
until $C_m$ was cofinal in $C_n$, we could just as easily have chosen successively smaller realizations $a_1, \dots, a_k$ of $C_0 <x<R_{0}^{>C_0}$, until $R_{k}^{>C_{k}}$ was coinitial in $R_{n}^{>C_n}$.  Then the inductive argument above is easily modified to yield $(R_{i},C)\preceq (R_{i+1}, C)$ for $i<k$, and Lemma \ref{elextnew} shows $(R_k, C)\preceq (R_n, C)$.
\end{remark}
We will use the above lemma to show that if one builds up from $(R_0,C)$ to $(R,C)$ by taking successively larger elements of $C(
R)$, each realizing the cut determined by $C$ in the previous model, until one has a cofinal subset of $C(R)$ and then builds the rest of the way to $R$ in any fashion, then each step from $R_0$ to $R$ is an elementary extension.  More precisely:

\begin{lemma} \label{ordered}
Suppose  $(R_0 , C )\preceq (R,C)$ and suppose $C$ defines a cut in $R_0$ with property $\ast$. Let $(a_{\alpha})_{\alpha<\kappa} $ be such that  $R=R_0\langle(a_{\alpha})_{\alpha<\kappa}\rangle$, and set $R_{\alpha}=R_0\langle(a_{\beta})_{\beta<\alpha} \rangle$ and $C_{\alpha}=R_{\alpha}\cap C(\mathcal{R})$.  Assume further that 
\begin{enumerate}
    \item there is $\gamma \leq \kappa$ such that $a_{\alpha}\in C_{\alpha+1}^{>C_{\alpha}}$ for all $\alpha<\gamma$, and $C_\gamma$ is cofinal in $C(R)$.
    \item there is $\delta \leq \kappa$ such that $R_{\delta}^{>C_\delta}$ is coinitial in $R^{>C(R)}$ and for all $\alpha$ with $\gamma \leq \alpha < \delta$, $a_{\alpha}\in R_{\alpha+1}^{>C_{\alpha+1}}$ but $a_\alpha < R_{\alpha}^{>C_{\alpha}}$.
\end{enumerate} Then \[(R_\alpha,C) \preceq (R_{\alpha+1},C)\preceq (R,C)\] for  each $\alpha<\kappa$.
\end{lemma}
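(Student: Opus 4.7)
The plan is to prove the lemma by transfinite induction on $\alpha\le\kappa$, establishing simultaneously that (i) $C$ defines a cut in $R_\alpha$ with property $\ast$, and (ii) $(R_\beta,C)\preceq(R_{\beta+1},C)$ for every $\beta<\alpha$. Taking $\alpha=\kappa$ produces an elementary chain $((R_\beta,C))_{\beta\le\kappa}$ whose union is $(R,C)$, so the Tarski--Vaught chain lemma yields $(R_\alpha,C)\preceq(R,C)$ for each $\alpha$; combined with (ii) this is the desired conclusion.

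The base case is the hypothesis. Limit stages pose no real difficulty: (ii) is inherited, and (i) holds because a witness to the failure of $\ast$ (a finite Morley sequence together with a definable function as in Proposition \ref{equivalent to star}(1)) uses only finitely many parameters and therefore descends to some $R_\beta$ with $\beta<\alpha$, contradicting the induction hypothesis.

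The substantive work is the successor step $\alpha=\beta+1$. Part (i) at $\beta+1$ follows from Lemma \ref{superstructures} applied to the single element $a_\beta$ over $R_\beta$: clause (1) when $\beta<\gamma$, since $a_\beta\in C_{\beta+1}^{>C_\beta}$ is a one-element Morley realization of $p|_{R_\beta}$ from inside $C$; clause (2) when $\gamma\le\beta<\delta$, since $a_\beta$ realizes $q|_{R_\beta}$; and clause (3) when $\beta\ge\delta$, since the cofinality of $C_\gamma$ in $C(R)$ and coinitiality of $R_\delta^{>C_\delta}$ in $R^{>C(R)}$, combined with the fact that $r\in R$ lies in $C(R)$ iff $r\in C(\mathcal R)$, imply that no element of $R_{\beta+1}$ (and hence none of $\dcl_{R_\beta}(a_\beta)$) realizes $p|_{R_\beta}$.

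For part (ii), I apply Theorem \ref{elextnew} with base $R_\beta$ and new element $a_\beta$; property $\ast$ for $R_\beta$ is part (i) of IH. It remains to rule out bad $R_\beta$-definable functions $f,g$ satisfying the failure criterion of the theorem. When $\beta\ge\delta$ this is immediate from the argument for clause (3) above: $f(a_\beta),g(a_\beta)\in R_{\beta+1}$ cannot realize $p|_{R_\beta}$, so the condition $C(R_\beta)<f(a_\beta),g(a_\beta)<R_\beta^{>C(R_\beta)}$ is vacuous. The regimes $\beta<\gamma$ and $\gamma\le\beta<\delta$ are handled by mirroring the inductive step of the proof of Lemma \ref{mainlemma}: the parameters of $f$ and $g$ lie in $R_0\langle\vec c\rangle$ for some finite subtuple $\vec c\subseteq\{a_\zeta:\zeta<\beta\}$; absorbing $\vec c$ and $a_\beta$ into existential quantifiers gives an $\mathcal L_{R_0}$-sentence that holds in $R$ thanks to the bad $f,g$, transfers to $R_0$ via $(R_0,C)\preceq(R,C)$, and is then lifted successively through the elementary chain provided by IH---choosing at each level a witness matching the corresponding $a_\zeta$---until one obtains a contradiction via Fact \ref{decreasing}. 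The sub-case $\beta<\gamma$ parallels Lemma \ref{mainlemma} directly, while the sub-case $\gamma\le\beta<\delta$ uses its reverse-order variant from Remark \ref{reverse order}. This transfer-and-lift argument is the principal obstacle in the proof.
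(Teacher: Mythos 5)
Your overall strategy --- a transfinite induction that maintains simultaneously property $\ast$ for $R_\alpha$ and elementarity of the chain, using Lemma \ref{superstructures}, Theorem \ref{elextnew}, and a Lemma \ref{mainlemma}--style reduction to finitely many parameters --- is essentially the paper's. But there are concrete problems. You have clauses (1) and (2) of Lemma \ref{superstructures} swapped. For $\beta<\gamma$ the element $a_\beta$ lies in $C(\mathcal{R})$, so the applicable clause is (2) (a Morley sequence in $q$, with $\dcl_{R_\beta}(a_\beta)\cap p|_{R_\beta}\subseteq C(\mathcal{R})$), not (1), whose $\dcl$-condition requires everything to lie strictly \emph{above} $C(\mathcal{R})$, which is incompatible with $a_\beta\in C$. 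Conversely, for $\gamma\le\beta<\delta$ the element $a_\beta$ lies above $C(\mathcal{R})$ and one needs clause (1). (The phrase ``Morley realization of $p|_{R_\beta}$'' is no constraint, since for a single element $p|_{R_\beta}=q|_{R_\beta}$; the clauses are distinguished only by the side of $C(\mathcal{R})$ the definable closure lies on.) A second, more substantive gap: you never verify the $\dcl$-condition of Lemma \ref{superstructures}. To invoke clause (2) at $a_\beta$ you need that every element of $R_{\beta+1}=R_\beta\langle a_\beta\rangle$ realizing the cut of $R_\beta$ is in $C(\mathcal{R})$, and this is not given by the hypotheses of Lemma \ref{ordered}; it is extracted from the elementarity $(R_\beta,C)\preceq(R_{\beta+1},C)$ together with the easy direction of Theorem \ref{elextnew}. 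Since that elementarity is exactly part (ii) at stage $\beta+1$, you must establish (ii) before (i) and say so explicitly; as written the dependency is hidden and the argument reads as circular.

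There is also a subtler issue in your limit-stage descent via Proposition \ref{equivalent to star}(1): a function with parameters in some $R_\beta$ that maps a cofinal sequence in $C_\alpha$ onto a coinitial sequence in the convex hull of $R_\alpha^{>C_\alpha}$ need not map cofinal-in-$C_\beta$ onto coinitial-in-$R_\beta^{>C_\beta}$, because $C_\beta$ need not be cofinal in $C_\alpha$. You should descend via condition (2) or (3) of that proposition --- the dividing is witnessed by an element $s\in\dcl(R_\alpha\underline{t}\,t_n)$ with $t_1<s\models p|_{R_\alpha}$, which uses only finitely many parameters from $R_\alpha$ and still realizes the cut over the corresponding $R_\beta$ --- or sidestep the limit stage entirely, as the paper does, by applying Lemma \ref{superstructures} to the full initial segment $(a_\zeta)_{\zeta<\alpha}$ at once (this is precisely why Lemma \ref{superstructures} is stated for Morley sequences of arbitrary ordinal length). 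Finally, for the elementarity at $\gamma\le\beta<\delta$ you do not need the reverse-order transfer machinery: once $C_\gamma$ is cofinal in $C(R)$, there simply cannot be any $f$ with $f(a_\beta)\in C(R_{\beta+1})^{>C_\beta}$, so Theorem \ref{elextnew} gives elementarity directly.
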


\begin{proof}  Let $\gamma$ be as indicated in the lemma. Note that $(a_\alpha)_{\alpha<\gamma}$ is a Morley sequence in  $q$ (in the sense of Lemma \ref{superstructures} with $R_0$ playing the role of $R$ ).

By Lemma \ref{basestep}, $(R_0 , C)\preceq (R_1 , C)$. Then by Lemma \ref{superstructures}, $C$ defines a cut in $R_1$ with property $\ast$. Suppose $\alpha_0<\gamma$ is the first ordinal such that $(R_{\alpha_0}, C)\npreceq (R_{\alpha_0+1}, C)$.

By applying Lemma \ref{basestep} and then Theorem \ref{elextnew} at successor ordinals less than $\alpha_0$, we see that $(a_\alpha)_{\alpha<\alpha_0}$ satisfies condition (2) of Lemma \ref{superstructures}.  Thus in $(R_{\alpha_0}, C)$, one has that $C_{\alpha_0}$ satisfies property $\ast$ and we may apply Theorem \ref{elextnew} to obtain $f(a_{\alpha_0})$ not in $C_{\alpha_0 +1}$ but smaller than $R_{\alpha_0}^{>C_{\alpha_0}}$.  Suppose the parameters in $f$ are $a_{\beta_0}, \dots ,a_{\beta_n}$, where each $\beta_i < \alpha_0$.  Then \[R_0\langle a_{\beta_0}, \dots a_{\beta_n} \rangle\npreceq R_0\langle a_{\beta_0}, \dots a_{\beta_n}, a_{\alpha_0}\rangle ,\] contradicting Lemma \ref{mainlemma}.  

Note $(a_\alpha)_{\gamma\leq \alpha<\delta}$ is a Morley sequence in  $p$ (in the sense of Lemma \ref{superstructures}) over $R_\gamma$, showing that the cut $C_\alpha$ in $R_\alpha$ has property $\ast$ for each $\gamma<\alpha<\delta$, by condition (1) of Lemma \ref{superstructures}. Also note that  $(a_\alpha)_{\delta\leq \alpha}$ satisfies condition (3) of Lemma \ref{superstructures} (where $R_\delta$ plays the role of $R$ in Lemma \ref{superstructures}), showing that property $\ast$ holds for the cut $C_\alpha$ in $R_\alpha$ for each $\delta<\alpha$.

For any $\alpha\geq \gamma$, the fact that $C_\gamma$ is cofinal in $C(R)$ means that Theorem \ref{elextnew} implies $(R_\alpha, C)\preceq (R_{\alpha+1},C)$.  Taking unions at limit ordinals yields $(R_\beta, C)\preceq (R, C)$ for any $\beta<\kappa$.
\end{proof}

\begin{remark}\label{reverse order 2}
Suppose instead of having the sequence $(a_\alpha)_{\alpha<\kappa}$ in the order described in the hypotheses of Lemma \ref{ordered}, one has \begin{enumerate}
    \item $\gamma \leq \kappa$ such that $R_{\gamma}^{>C_\gamma}$ is coinitial in $R^{>C(R)}$ and for all $\alpha$ with $\alpha < \gamma$, $a_{\alpha}\in R_{\alpha+1}^{>C_{\alpha+1}}$ but $a_\alpha < R_{\alpha}^{>C_{\alpha}}$, and 
    \item $\delta \leq \kappa$ such that $a_{\alpha}\in C_{\alpha+1}^{>C_{\alpha}}$ for all $\gamma\leq\alpha<\delta$, and $C_\gamma$ is cofinal in $C(R)$.
\end{enumerate}
then one can see that the conclusion of Lemma \ref{ordered} still holds using the same proof except flipping the use of condition (1) and condition (2) of Lemma \ref{superstructures} and obtaining a contradiction with Remark \ref{reverse order} instead of Lemma \ref{mainlemma}.
\end{remark}

\begin{lemma}\label{orthogonal to C}
Suppose $(R_1, C)\preceq (R_2, C)$ and suppose that $(R_1, C)$ has property $\ast$.  Let $C_1=C(R_1)$.  Suppose that $a\in R_2$ with $C_1<a<R^{>C_1}$ and $C(a)$.  Suppose that $b\in R_2\setminus R_1$ and there is no $\tilde b \in\dcl_{R_1}(b)$ with $C_1<\tilde b < R^{>C_1}$.  Then $(R_1\langle b \rangle, C)\preceq (R_1\langle ab \rangle ,C)$.
\end{lemma}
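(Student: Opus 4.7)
The strategy is to apply Theorem \ref{elextnew} to the extension $(R_1\langle b\rangle,C)\subseteq(R_1\langle ab\rangle,C)$. First, the cut defined by $C$ in $R_1\langle b\rangle$ has property $\ast$ by Lemma \ref{superstructures}(3), applied with $R=R_1$ and the Morley sequence consisting of $b$ alone, since the hypothesis on $b$ implies $\dcl_{R_1}(b)\cap p|_{R_1}=\emptyset$ (where $p$ is the $C$-side cut type over $R_1$). I will also use the structural observation that $C_1$ is cofinal in $C(R_1\langle b\rangle)$ and $R_1^{>C_1}$ is coinitial in $R_1\langle b\rangle^{>C(R_1\langle b\rangle)}$; this follows from the same hypothesis, since any element of $R_1\langle b\rangle$ strictly between $C_1$ and $R_1^{>C_1}$ would realize the $R_1$-cut.

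By Theorem \ref{elextnew}, it suffices to rule out the existence of an $R_1\langle b\rangle$-definable function $f$ and $a'\in C(R_1\langle ab\rangle)^{>C(R_1\langle b\rangle)}$ with $C(R_1\langle ab\rangle)<f(a')<R_1\langle b\rangle^{>C(R_1\langle b\rangle)}$. Since $a\in C(R_1\langle ab\rangle)$ and $a>C(R_1\langle b\rangle)$ (using cofinality of $C_1$), the WLOG clause of Theorem \ref{elextnew} lets me take $a'=a$. Assume for contradiction that such $f$ exists and write $f(x)=g(x,b)$ with $g$ an $R_1$-definable function. Since $R_1^{>C_1}\subseteq R_1\langle b\rangle^{>C(R_1\langle b\rangle)}$, the upper bound on $f(a)$ forces $g(a,b)<R_1^{>C_1}$, so $g(a,b)$ itself realizes the $R_1$-cut, from the top side.

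The main obstacle will be the monotonicity contradiction. By o-minimality of $\mathcal{R}$, I choose an interval $I'\ni a$ with endpoints in $R_1\langle b\rangle$ on which $g(\cdot,b)$ is continuous and strictly monotone, and by shrinking I may assume $g(y,b)\notin C$ throughout $I'$. For any $y\in I'\cap R_1\langle b\rangle$, then $g(y,b)\in R_1\langle b\rangle\setminus C=R_1\langle b\rangle^{>C(R_1\langle b\rangle)}$, and since $g(a,b)<R_1\langle b\rangle^{>C(R_1\langle b\rangle)}$ by hypothesis on $f$, I deduce $g(y,b)>g(a,b)$. Using cofinality of $C_1$ and coinitiality of $R_1^{>C_1}$ I pick $y_-\in C_1\cap I'$ with $y_-<a$ and $y_+\in R_1^{>C_1}\cap I'$ with $y_+>a$; then whichever way $g(\cdot,b)$ is monotone on $I'$, monotonicity forces either $g(y_-,b)<g(a,b)$ (in the increasing case) or $g(y_+,b)<g(a,b)$ (in the decreasing case), contradicting $g(y,b)>g(a,b)$. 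This rules out $f$, and Theorem \ref{elextnew} yields $(R_1\langle b\rangle,C)\preceq(R_1\langle ab\rangle,C)$.
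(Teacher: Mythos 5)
The shrinking step is unjustified and is in fact impossible; this is where the argument breaks. You claim to shrink $I'$, keeping its endpoints in $R_1\langle b\rangle$, so that $g(y,b)\notin C$ for all $y\in I'$. Consider the increasing case and take any $y\in I'\cap R_1\langle b\rangle$ with $y<a$ (for instance $y=y_-$). Since $g(\cdot,b)$ is increasing, $g(y,b)<g(a,b)<R_1\langle b\rangle^{>C(R_1\langle b\rangle)}$. But $g(y,b)\in R_1\langle b\rangle$ because $g$ is $R_1$-definable and $y,b\in R_1\langle b\rangle$, and since $C$ defines a cut in $R_1\langle b\rangle$, every element of $R_1\langle b\rangle$ not in $R_1\langle b\rangle^{>C(R_1\langle b\rangle)}$ lies in $C(R_1\langle b\rangle)$. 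Hence $g(y,b)\in C$, for \emph{every} such $y$; so no interval around $a$ with left endpoint in $R_1\langle b\rangle$ can satisfy $g(\cdot,b)\notin C$ throughout (the decreasing case is symmetric, with the right endpoint). The contradiction you then reach — $g(y_-,b)<g(a,b)$ yet $g(y_-,b)>g(a,b)$ — is a contradiction with the shrinking being possible, not with the existence of $f$.

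A pure monotonicity argument cannot close this gap: $g(\cdot,b)$ could map an interval of $R_1\langle b\rangle$ monotonically onto another interval straddling the cut, with the boundary of $\{y : g(y,b)>C\}$ sitting inside the $R_1$-gap, and nothing so far prevents $g(a,b)$ from landing above $C(R_1\langle ab\rangle)$. The paper's proof deals with exactly this by studying the family $B_x=\{y : f(x,y)>C\}$, showing that $B_{\tilde a}=B_a$ for every $\tilde a\in C(R_1\langle a\rangle)$ with $\tilde a>a$, and then invoking the elementarity $(R_1,C)\preceq(R_1\langle a\rangle,C)$ to pull the sentence $\exists x_0\,\bigcup_{x\in C}B_x=B_{x_0}$ down to $R_1$. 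This produces $a_0\in R_1$ with $f(a_0,b)>C$ and $f(a_0,b)<f(a,b)$; since $f(a_0,b)\in\dcl_{R_1}(b)$ then realizes the $R_1$-cut, this contradicts the hypothesis on $b$. Your setup via Theorem \ref{elextnew} and Lemma \ref{superstructures} is fine, but this transfer to $R_1$ — not a monotonicity observation — is what your argument is missing.
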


\begin{proof}
First note that by Lemma \ref{basestep}, $(R_1, C)\preceq (R_1\langle a \rangle, C)$ and $(R_1, C)\preceq (R_1\langle b \rangle, C)$, and applying Lemma \ref{superstructures}, we see that $(R_1\langle a \rangle, C)$ and $(R_1\langle b \rangle, C)$ have property $\ast$.  Also note that $a$ can be taken to be the beginning of a sequence as in the hypotheses of Lemma \ref{ordered}, and so, applying that lemma, we see that $(R_1\langle a \rangle, C)\preceq (R_2, C)$.  Thus applying Lemma \ref{basestep}, we see that $(R_1\langle a \rangle, C)\preceq (R_1\langle ab \rangle, C)$.

Applying Theorem \ref{elextnew} to $(R_1\langle a \rangle, C)\preceq(R_1\langle ab \rangle, C)$, we see that, \textit{a priori}, there are three possibilities:
\begin{enumerate}
    \item $\dcl_{R_1\langle a \rangle}(b)$ contains no elements greater than $C(R_1\langle a \rangle)$ and less than $R_1\langle a \rangle^{>C(R_1\langle a \rangle)}$,
    \item The elements of $\dcl_{R_1\langle a \rangle}(b)$ that are greater than $C(R_1\langle a \rangle)$ and less than $R_1\langle a \rangle^{>C(R_1\langle a \rangle)}$ are all in $C$,
    \item The elements of $\dcl_{R_1\langle a \rangle}(b)$ that are greater than $C(R_1\langle a \rangle)$ and less than $R_1\langle a \rangle^{>C(R_1\langle a \rangle)}$ are all greater than $C$.
\end{enumerate}
In the first two cases, Theorem \ref{elextnew} implies that $(R_1\langle b \rangle, C)\preceq (R_1\langle ab \rangle)$, while in the third case it implies that $(R_1\langle b \rangle, C)\npreceq (R_1\langle ab \rangle)$.

Thus, for a contradiction, we assume that there is a $R_1$-definable function $f(x,y)$ with $C(R_1\langle a \rangle)<f(a,b)<R_1\langle a \rangle^{>C(R_1\langle a \rangle)}$.  Let $B_x=\{y|f(x,y)>C\}$.   Suppose first that $f(a,y)$ is increasing as a function of $y$.  Then $B_{a}$, realized as a subset of $R_1\langle a \rangle$, includes everything to the right of the cut of $b$ in $R_1\langle a \rangle$, since $f(a,b)>C(R_1\langle a \rangle)$.  (And note that $\tp(b/R_1\langle a \rangle)$ is a cut since $f(a, y)$ maps this type to the cut corresponding to $C$ in $R_1\langle a \rangle$.)  $B_a$ cannot include any $d\in R_1$ to the left of the cut of $b$, since then \[C(R_1\langle a \rangle) < f(a,d) < f(a,b) < R_1\langle a \rangle^{>C(R_1\langle a \rangle)},\] which is impossible as $f(a,d)\in R_1\langle a \rangle$.  And since $ R_1\langle b \rangle $ contains nothing realizing $\tp(a/R_1)$, $R_1\langle a \rangle$ can contain no points realizing $\tp(b/R_1)$.  Thus if $B_a$ included anything to the left of the cut of $b$ in $R_1\langle a \rangle$, it would contain a element of $R_1$. Similarly if $f(a,y)$ is decreasing, $B_{a}$ contains everything to the left of the cut of $b$ and contains nothing to its right.
 
Note that $f(x,b)$ maps $\tp(a/R_1)$ to itself.  Thus $f(x,b)$ is increasing on an definable neighborhood of $a$.  So there is an interval $I$ defined over $R_1$ and including $b$ so that for all $y\in I$, $f(x,y)$ is increasing in $x$. We restrict ourselves to such an $I$.
  
Since $f(x,y)$ is increasing as a function of $x$, $x_1<x_2$ implies that $B_{x_1}\subseteq B_{x_2}$.  Choose $\tilde a\in C(R_1\langle a \rangle)$ with $\tilde a>a$.  $B_{a}\subseteq B_{\tilde a}$ so $B_{\tilde a}$ also includes everything on one side of the cut of $b$, and we claim that $B_{\tilde a}$ also cannot contain any $d$ on the other side of the cut of $b$.  If it did contain a $d$ on the other side of $b$, then we could choose such a $d$ in $R_1$.  Then $(R_1\langle a \rangle ,C)$ would satisfy the sentence ``there is x in $C$ such that $B_x$ contains $d$", and  $(R_1 ,C)$, being an elementary submodel, would satisfy the same sentence. Thus $R_1$ would contain an $r$ in $C$ such that $d\in B_{r}$.  Thus $f(r,d)>C$, but $f(r,d)<f(a,d)<f(a,b)$, so $f(r,d)\in R_1$ would realize the cut of $a$ in $R_1$.

So for any $\tilde a>a$ in $C(R_1\langle a \rangle)$, $B_{\tilde a}=B_{a}$.  Thus \[(R_1\langle a \rangle, C) \models \exists x_0 \bigcup_{x\in C} B_x = B_{x_0}.\]  Since $(R_1, C)\preceq(R_1\langle a \rangle, C)$, we see that $(R_1, C)\models \exists x_0 \bigcup_{x\in C} B_x = B_{x_0}$. 
  
Say $a_0\in R_1$ is such an $x_0$.  $(R_1, C)$ satisfies $\bigcup_{x\in C} B_x = B_{a_0}$, so $(R_1\langle a \rangle, C)$ does as well.  Thus $(R_1\langle a \rangle, C) \models B_{a} = B_{a_0}$.  Since $(R_1\langle a \rangle, C)\preceq (R_1\langle ab \rangle, C)$, $B_{a} = B_{a_0}$ holds in $(R_1\langle ab \rangle, C)$ as well.  In particular, $f(a_0,b)>C$. Since $f(a_0,b) >C$ is quantifier free, it holds in $(R_1\langle b \rangle, C)$ as well.  As we also have $f(a_0,b)< f(a,b)$, we see that $C_1<f(a_0,b) < R^{>C_1}$, contradicting our choice of $b$.

\end{proof}

\begin{lemma}\label{elsubstr}
Suppose $(R_0 , C)\preceq (R,C)$, $C$ defines a cut in $R_0$ with property $\ast$, and $b\in R\setminus R_0$.  Then $(R_0\langle b\rangle , C)\preceq (R,C)$.
\end{lemma}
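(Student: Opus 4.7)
The plan is to combine Lemmas \ref{basestep}, \ref{superstructures}, \ref{ordered}, \ref{orthogonal to C}, and Theorem \ref{elextnew} via a case analysis on the type of $b$ over $R_0$. By Lemma \ref{basestep}, $(R_0, C) \preceq (R_0\langle b \rangle, C)$, and by the appropriate clause of Lemma \ref{superstructures}, $(R_0\langle b \rangle, C)$ has property $\ast$. It suffices to prove $(R_0\langle b \rangle, C) \preceq (R, C)$, and I split on whether $\dcl_{R_0}(b)$ contains a realization of the cut $p|_{R_0}$ given by $C(R_0) < x < R_0^{>C(R_0)}$.

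\textbf{Case A}: $\dcl_{R_0}(b) \cap p|_{R_0} \neq \emptyset$. Pick $\tilde b$ in the intersection. By additivity of o-minimal dimension, $\dim(R_0\langle b \rangle / R_0) = 1 = \dim(R_0\langle \tilde b \rangle / R_0)$ forces $R_0\langle \tilde b \rangle = R_0\langle b \rangle$, so I may replace $b$ by $\tilde b$ and assume $b$ itself realizes $p|_{R_0}$. If $b \in C$, apply Lemma \ref{ordered} to $(R_0, C) \preceq (R, C)$ using a generating sequence of $R$ over $R_0$ that begins with $a_0 = b$, continues with Morley elements of $p$ cofinal in $C(R)$, then Morley elements of $q$ coinitial in $R^{>C(R)}$, then arbitrary generators; the resulting chain of elementary extensions yields $(R_0\langle b \rangle, C) \preceq (R, C)$. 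If $b > C$, use Remark \ref{reverse order 2} in place of Lemma \ref{ordered}.

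\textbf{Case B}: $b$ is orthogonal to the cut, i.e., $\dcl_{R_0}(b) \cap p|_{R_0} = \emptyset$. Build $R$ over $R_0\langle b \rangle$ by transfinite extension, adding in turn a Morley sequence in $p$ over $R_0$ making $C$ cofinal in $C(R)$, a Morley sequence in $q$ coinitial in $R^{>C(R)}$, and finally the remaining generators. The first Morley element $a_0$ is adjoined by Lemma \ref{orthogonal to C}, giving $(R_0\langle b\rangle, C) \preceq (R_0\langle a_0,b\rangle, C)$; at each subsequent step the trichotomy inside the proof of Lemma \ref{orthogonal to C} produces either (i) persistence of orthogonality, in which case I iterate the same lemma with the next Morley element, or (ii) some $\tilde b \in \dcl_{R_0\langle a_{<\alpha}\rangle}(b)$ realizing the cut over the new base and lying in $C$, in which case the dimension argument of Case A yields $R_0\langle a_{<\alpha}, \tilde b \rangle = R_0\langle a_{<\alpha}, b\rangle$ and Case A applied over $R_0\langle a_{<\alpha}\rangle$ (which is elementary in $R$ and has property $\ast$ by Lemma \ref{ordered} and Lemma \ref{superstructures}) delivers $(R_0\langle a_{<\alpha}, b\rangle, C) \preceq (R, C)$, which chains with the preceding steps to conclude. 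If orthogonality persists throughout both Morley phases, any remaining element is absorbed by Theorem \ref{elextnew}, whose failure condition cannot hold in a model where $C$ is already cofinal and $R^{>C}$ already coinitial. Degenerate cases (where one of the Morley phases is vacuous because the relevant cofinality or coinitiality is already present in $R_0$) fold into the final Theorem \ref{elextnew} step.

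The main obstacle is the bookkeeping in Case B: at each step of the iterated application of Lemma \ref{orthogonal to C}, one must decide whether orthogonality of $b$ persists over the current base or has collapsed into the Case A configuration, and verify that the symmetric bad alternative (cut elements of $\dcl_{R_0\langle a_{<\alpha}\rangle}(b)$ lying above $C$ rather than in $C$) is impossible. Ruling out this alternative is precisely the content of the contradiction in case (3) of the proof of Lemma \ref{orthogonal to C}, which uses property $\ast$ of the current base, maintained along the chain by Lemma \ref{superstructures}.
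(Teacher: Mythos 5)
Your proposal is correct and follows essentially the same route as the paper: your Case A corresponds to the paper's Cases 1 and 2 (handled by Lemma \ref{ordered} and Remark \ref{reverse order 2} after normalizing $b$ to a cut realization via the exchange/dimension argument), and your Case B corresponds to the paper's Case 3 (iterating Lemma \ref{orthogonal to C} through the first Morley phase, with the paper absorbing a newly-appearing cut realization into the generating sequence rather than explicitly falling back to Case A over the new base, which amounts to the same thing). One small imprecision to note: Lemma \ref{orthogonal to C} requires $C(a)$, so it cannot literally be iterated through the second Morley phase; but, as your final sentence correctly observes, once $C$ is cofinal the remaining steps are absorbed directly by Theorem \ref{elextnew}, which is also how the paper concludes.
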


\begin{proof}
\begin{trivlist}
\item[Case 1:] There is an element of $\dcl_{R_0}(b)$ in $C(R)^{>C(R_0)}$:  
We may assume that this element is $b$.  We
may find $(a_{\alpha})_{\alpha <\kappa}$ as in the hypotheses of Lemma \ref{ordered} with $a_0 = b$ to obtain $(R_0\langle b \rangle ,C) \preceq (R, C)$. 
\item[Case 2:] There is an element of $\dcl_{R_0}(b)$ greater than $C(R_0\langle b \rangle)$ but less than $R_0^{>C(R_0)}$:
As in Case 1, but using Remark \ref{reverse order 2}.
 
\item[Case 3:] No element of $\dcl_{R_0}(b)$ is greater than $C(R_0)$ but less than $R_0^{>C(R_0)}$. Choose $(a_{\alpha})_{\alpha <\kappa}$ as in the hypotheses of Lemma \ref{ordered} (and let $R_{\alpha}$ and $\gamma$ be as in the proof of Lemma \ref{ordered} as well) with the additional requirement that if at any stage $\alpha$ there is an element of $\dcl_{R_0}(b(a_{\beta})_{\beta <\alpha })$ that could be chosen as $a_{\alpha}$ then one chooses such an element.   If this happens at some stage prior to $\gamma$, let $\beta$ denote that stage.  Otherwise, let $\beta=\gamma$. Thus one has:

\begin{tikzcd}
(R_0 \langle b\rangle, C) \arrow[r, hook, "\rho_1"] 
& (R_1\langle b \rangle, C)  \arrow[r, hook, "\rho_2"] &\dots  \arrow[r, hook, "\rho_{\beta}"]& (R_{\beta}\langle b\rangle, C) \arrow[r, hook, "\rho"] & (R,C)\\
(R_0,C) \arrow[r, hook, "\tau_1"] \arrow[u, hook, "\sigma_0"]
& (R_1,C) \arrow[r, hook, "\tau_2"] \arrow[u, hook, "\sigma_1"] & \dots \arrow[r, hook, "\tau_{\beta}"] & (R_{\beta},C) \arrow[u, hook, "\sigma_{\beta}"] & 
\end{tikzcd}

\noindent where each $\tau_i$ is an elementary embedding by Lemma \ref{ordered}, each $\sigma_i$ is an elementary embedding by Lemma \ref{basestep}, and $\rho$ is an elementary embedding since either $\beta<\gamma$ and $\rho$ continues the chain of elementary embeddings $(R_{\alpha}, C)\preceq(R_{\alpha+1}, C)$, or $\beta=\gamma$ and $C(R_{\beta}\langle b\rangle)$ is cofinal in $C(R)$.

Note that $R_0\langle b \rangle$ contains no $x$ with $C(R_0)<x<R_0^{C(R_0)}$,  so Lemma \ref{orthogonal to C} shows that $\rho_{1}$ is an elementary embedding.  Inductively, except for possibly $i=\beta$, $R_i\langle b \rangle$ contains no $x$ with $C(R_i)<x<R_i^{C(R_i)}$,  so Lemma \ref{orthogonal to C} shows that $\rho_{i+1}$ is an elementary embedding.  Thus composing the $\rho_i$ and $\rho$, one sees that $(R_0\langle b\rangle, C)\preceq (R,C)$.

\end{trivlist}

\end{proof}

\begin{remark}
Up to now, we have worked inside a fixed monster model of $Th(R,C)$, and so we have treated property $\ast$ as a property of a particular substructure even though its definition depends on parameters from outside the structure.  In what follows, we will take a substructure of $(R,C)$ and embed it into a model of a potentially different theory, raising the question of whether this could potentially change whether the substructure satisfies $\ast$.  

But it is easy to see that this will not happen.  For if a substructure $(R_0,C)\subseteq (R,C)$ does not satisfy $\ast$ as a substructure of $(\mathcal{R},C)$, then there is a tuple $t$ and a function $f(x,t)$ mapping a cofinal sequence in $C(R_0)$ to a sequence coinitial in $R_0^{>C(R_0)}$, and this can be seen to be a property of the $\mathcal{L}_0$-type of $t$ over $R_0$.  If $(R_0,C)$ is also a substructure of a sufficiently saturated $(\mathcal{R}_1, C)$, then as the $\mathcal{L}_0$-theories of $\mathcal{R}_1$ and $\mathcal{R}$ are the same, this type is also realized in $\mathcal{R}_1$, and $(R_0,C)$ also fails to satisfy $\ast$ as a substructure of $\mathcal{R}_1$.
\end{remark}

\begin{proposition}\label{substr}
Suppose $(R_0 ,C)$ is a substructure of $(R,C)$ such that $(R,C)$ can be embedded over $R_0$ into an elementary extension $(\widetilde{R}, C)$ of $(R_0, C)$, and suppose $C$ defines a cut in $R_0$ with property $\ast$.  Then $(R_0, C)\preceq (R,C) \preceq (\widetilde{R}, C)$.  
\end{proposition}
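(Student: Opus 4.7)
The plan is to first establish $(R, C) \preceq (\widetilde{R}, C)$ by transfinite induction on a three-phase enumeration of $R$ over $R_0$ inside $\widetilde{R}$, and then deduce $(R_0, C) \preceq (R, C)$ from the standard fact that if $M_1 \subseteq M_2$ are both elementary submodels of $M_3$, then $M_1 \preceq M_2$: applied with $M_1 = (R_0, C)$, $M_2 = (R, C)$, $M_3 = (\widetilde{R}, C)$, it says that for any $\mathcal{L}$-formula $\phi(\bar a)$ with $\bar a \in R_0$, one has $(R_0, C) \models \phi(\bar a)$ iff $(\widetilde{R}, C) \models \phi(\bar a)$ iff $(R, C) \models \phi(\bar a)$.

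For $(R, C) \preceq (\widetilde{R}, C)$, I enumerate $R \setminus R_0$ as $(a_\alpha)_{\alpha < \kappa}$ following the pattern of Lemma \ref{ordered}, setting $R_\alpha := R_0\langle (a_\beta)_{\beta < \alpha}\rangle$ and $C_\alpha := R_\alpha \cap C$. Phase A, for $\alpha < \gamma$: pick $a_\alpha \in C \cap R$ with $a_\alpha > C_\alpha$, continuing until $C_\gamma$ is cofinal in $C(R)$. Phase B, for $\gamma \leq \alpha < \delta$: pick $a_\alpha \in R \setminus C$ with $C_\alpha < a_\alpha < R_\alpha^{>C_\alpha}$, continuing until $R_\delta^{>C_\delta}$ is coinitial in $R^{>C(R)}$. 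Phase C, for $\delta \leq \alpha < \kappa$: exhaust $R \setminus R_\delta$. By transfinite induction on $\alpha \leq \kappa$, I show both that $(R_\alpha, C) \preceq (\widetilde{R}, C)$ and that $C$ defines a cut in $R_\alpha$ with property $\ast$. The base $\alpha = 0$ is by hypothesis. At a successor stage $\alpha + 1$, Lemma \ref{elsubstr} applied to $(R_\alpha, C) \preceq (\widetilde{R}, C)$ with the element $a_\alpha \in \widetilde{R}\setminus R_\alpha$ yields $(R_{\alpha+1}, C) \preceq (\widetilde{R}, C)$. At a limit $\lambda$, elementarity holds as the union of an elementary chain of submodels of $(\widetilde{R}, C)$. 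Property $\ast$ at each $R_\alpha$ is obtained by applying Lemma \ref{superstructures} within the appropriate phase: for $\alpha \leq \gamma$, condition (2) applied to the Phase A prefix as a Morley sequence in $q$ over $R_0$ (with cut-realizations in $C(\mathcal{R})$, by construction $a_\beta \in C$); for $\gamma < \alpha \leq \delta$, condition (1) applied to the Phase B prefix as a Morley sequence in $p$ over $R_\gamma$ (with cut-realizations above $C$), using the property $\ast$ for $R_\gamma$ established at the previous step; for $\delta < \alpha$, condition (3) applied to the Phase C prefix over $R_\delta$ (no cut-realizations introduced). Taking $\alpha = \kappa$ gives $R_\kappa = R$, so $(R, C) \preceq (\widetilde{R}, C)$.

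The main obstacle I expect is verifying that Phase C truly introduces no cut-realizations over $R_\delta$, so that condition (3) of Lemma \ref{superstructures} is available. This follows from the termination conditions of Phases A and B: any $r \in R$ satisfying $C_\delta < r < R_\delta^{>C_\delta}$ would lie above all of $C(R)$ (by the cofinality of $C_\gamma \subseteq C_\delta$ in $C(R)$ achieved in Phase A) and simultaneously below a coinitial subset of $R^{>C(R)}$ (by the coinitiality achieved in Phase B), which is impossible for $r \in R$. A secondary point is confirming that the accumulated sequences in Phases A and B are genuine Morley sequences in $q$ and $p$ respectively with the $\dcl$-conditions of Lemma \ref{superstructures}(2) and (1); this parallels the analysis in the proof of Lemma \ref{ordered} and follows from the explicit ordering and side-of-cut criteria built into each phase.
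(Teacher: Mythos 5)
Your proof is correct and is a careful, detailed expansion of exactly the approach the paper gestures at. The paper's own proof is a single sentence — ``This follows from Lemmas~\ref{basestep} and~\ref{elsubstr} by transfinite induction'' — which suppresses precisely the point you have identified and handled: when one builds up from $(R_0,C)$ to $(R,C)$ inside $(\widetilde{R},C)$, Lemma~\ref{elsubstr} carries the elementarity $(R_\alpha,C)\preceq(\widetilde{R},C)$ to the next successor stage \emph{provided} $R_\alpha$ still has property $\ast$, and the only place in the paper where property $\ast$ is shown to persist is Lemma~\ref{superstructures}, whose three conditions require the accumulated generators to satisfy a coherent side-of-the-cut pattern. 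At a limit stage a haphazard enumeration would mix the three conditions and Lemma~\ref{superstructures} would not directly apply; your three-phase enumeration (cofinal in $C(R)$, then coinitial in $R^{>C(R)}$, then everything orthogonal to the cut) is exactly the pattern of Lemma~\ref{ordered} lifted one level, and it guarantees that at every ordinal, including limits, the accumulated prefix falls under one of conditions (1)--(3) of Lemma~\ref{superstructures} over the appropriate base. Your separate verifications — that Phase~C introduces no cut-realizers over $R_\delta$ because of the cofinality and coinitiality achieved in Phases~A and~B, and that the Phase~A and~B prefixes really are Morley sequences in $q$ resp.\ $p$ (with $C_\beta=C_\gamma$ throughout Phase~B forced by the easy direction of Theorem~\ref{elextnew}) — are the right sanity checks and they go through. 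Deducing $(R_0,C)\preceq(R,C)$ afterward from the sandwiching between elementary submodels of $(\widetilde{R},C)$ is standard and fine. In short: same strategy as the paper, but you have made explicit the bookkeeping of property $\ast$ across limit stages that the paper's terse citation of Lemmas~\ref{basestep} and~\ref{elsubstr} leaves to the reader.
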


\begin{proof}
This follows from Lemmas \ref{basestep} and \ref{elsubstr} by transfinite induction.
 \end{proof}

The above shows that, after adding constants for an elementary submodel of $(R,C)$ where the cut $C$ has property $\ast$, any substructure of $(R,C)$ is an elementary submodel.  This easily leads to quantifier elimination in the language with the new constants. 

We now give a more concrete description of the constants needed.  We use the following fact, found for example in Hodges \cite{h}, p. 294, Theorem 6.5.1 (where his $B$ and $\bar a$ are both $A$ below).

\begin{fact}\label{Hodges}
 Let $A$ be a substructure of $C$, and assume that every existential formula with parameters from $A$ that is true in $C$ is also true in $A$.  Then there is an embedding of $C$ into an elementary extension, $D$, of $A$, and this embedding can be chosen to be the identity on $A$.
\end{fact}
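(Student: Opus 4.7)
The plan is to establish this via the method of diagrams together with compactness. Expand the language by a new constant symbol $c$ for every element of $C$ (thereby, in particular, by a constant $a$ for every element of $A$). Consider the theory
$$T \;=\; \mathrm{ElDiag}(A) \;\cup\; \mathrm{AtDiag}(C),$$
where $\mathrm{ElDiag}(A)$ is the set of all $\mathcal{L}_A$-sentences true in $A$, and $\mathrm{AtDiag}(C)$ is the set of all atomic and negated atomic $\mathcal{L}_C$-sentences true in $C$. Any model $D$ of $T$ will serve as the required structure: the map $a \mapsto a^D$ is an elementary embedding of $A$ into $D$ (since $D \models \mathrm{ElDiag}(A)$), and the map $c \mapsto c^D$ is an embedding of $C$ into $D$ (since $D \models \mathrm{AtDiag}(C)$); these two maps agree on $A$ by construction. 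After identifying $A$ with its image, we obtain $A \preceq D$ with $C$ embedded over $A$.

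The key step is to show $T$ is consistent, which is done by compactness. Take an arbitrary finite $T_0 \subseteq T$. It consists of finitely many $\mathcal{L}_A$-sentences $\varphi_1, \dots, \varphi_n \in \mathrm{ElDiag}(A)$ and finitely many quantifier-free formulas $\psi_1(\bar a, \bar c), \dots, \psi_m(\bar a, \bar c)$, where $\bar a$ is a tuple from $A$ and $\bar c$ is a tuple of constants for elements of $C \setminus A$, such that $C \models \psi_j(\bar a, \bar c)$ for each $j$. In particular,
$$C \;\models\; \exists \bar x \, \bigwedge_{j=1}^{m} \psi_j(\bar a, \bar x).$$
This is an existential $\mathcal{L}_A$-sentence true in $C$, so by the hypothesis on $A$ it is also true in $A$. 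Choose a witness $\bar a'$ from $A$.

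Now expand $A$ to an $\mathcal{L}_C$-structure $A^+$ by interpreting each $c_i$ in $\bar c$ as the corresponding entry of $\bar a'$, and interpreting any other new constants from $T_0$ arbitrarily in $A$. Then $A^+ \models \varphi_i$ for each $i$ (these involve only the original $\mathcal{L}_A$-vocabulary and hold in $A$), and $A^+ \models \psi_j(\bar a, \bar a')$ for each $j$ by the choice of $\bar a'$. Hence $A^+ \models T_0$, so $T_0$ is satisfiable. By compactness, $T$ is consistent, yielding the desired $D$. The only real content is the compactness reduction; the hypothesis on existential formulas is tailored precisely to witness each finite fragment of the atomic diagram of $C$ inside $A$, so there is no serious obstacle beyond correctly bookkeeping the languages and diagrams.
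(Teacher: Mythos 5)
Your proof is correct and is the standard elementary-diagram-plus-compactness argument; the paper itself does not prove this fact but cites Hodges (Theorem 6.5.1), whose proof proceeds along essentially the same lines.
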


\begin{lemma} \label{existential theory}
Fix a model $(R,C)$ where the cut $C$ has property $\ast$.  Choose a sequence $(c_i)_{i\in I}$ cofinal in $C$ and a sequence $(d_j)_{j\in J}$ coinitial in $R^{>C}$.  Let $R_0$ be $\dcl_\mathcal{L}((c_i)_{i\in I} (d_j)_{j\in J})$.  Then  $(R_0, C)$ has property $\ast$, and $(R,C)$ can be embedded over $R_0$ into an elementary extension $(\widetilde{R}, C)$ of $(R_0, C)$.
\end{lemma}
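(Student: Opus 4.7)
The plan is to establish the two assertions separately.  First I would observe that $R_0$ is closed under the $\mathcal{L}_0$-definable Skolem functions built into $\mathcal{L}_0$, so it is a substructure of $R$ satisfying the universally axiomatized theory $T_0$, and quantifier elimination for $T_0$ then gives $R_0 \preceq R$ as $\mathcal{L}_0$-structures.  The embedding assertion reduces, by Fact \ref{Hodges}, to showing that every existential $\mathcal{L}$-formula with parameters from $R_0$ that is satisfied in $(R,C)$ is satisfied in $(R_0,C)$.  So I would write a quantifier-free $\mathcal{L}$-matrix as $\psi(\vec x,\vec a) \wedge \bigwedge_i C(t_i(\vec x,\vec a)) \wedge \bigwedge_k \neg C(s_k(\vec x,\vec a))$ with $\psi$ quantifier-free in $\mathcal{L}_0$, pick a witness $\vec b \in R$, and, because only finitely many $t_i,s_k$ appear, use cofinality of $(c_i)$ in $C(R)$ to produce $c^* \in R_0$ with $t_i(\vec b,\vec a) \leq c^*$ for all $i$, and coinitiality of $(d_j)$ in $R^{>C}$ to produce $d^* \in R_0$ with $s_k(\vec b,\vec a) \geq d^*$ for all $k$.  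The strengthened $\mathcal{L}_0$-existential $\exists \vec x\,(\psi(\vec x,\vec a) \wedge \bigwedge_i t_i(\vec x,\vec a) \leq c^* \wedge \bigwedge_k s_k(\vec x,\vec a) \geq d^*)$ then holds in $R$, hence in $R_0$ by $\mathcal{L}_0$-elementarity, and any witness in $R_0$ satisfies the original $\mathcal{L}$-formula by downward closedness of $C$ and upward closedness of its complement.

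For property $\ast$ of $(R_0,C)$, the key preliminary is that cofinality of $(c_i)$ in $C(R)$ and coinitiality of $(d_j)$ in $R^{>C}$ align the cuts in $R_0$ and $R$ inside $\mathcal{R}$: a sequence is cofinal in $C(R_0)$ iff it is cofinal in $C(R)$, the convex hulls of $R_0^{>C(R_0)}$ and $R^{>C(R)}$ in $\mathcal{R}$ coincide, and the canonical $R_0$-invariant global type $p_0$ just above the cut equals the canonical $R$-invariant type $p$.  I would then argue by contradiction: suppose some finite Morley sequence $t_1,\dots,t_n$ in $p_0|_{R_0}$ over $R_0$ together with an $\mathcal{L}_0$-definable over $R_0 t_2 \cdots t_{n-1}$ function $f$ witnesses failure of Proposition \ref{equivalent to star}(1) for $(R_0,C)$.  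Build a parallel Morley sequence $t'_1,\dots,t'_n$ in $p$ over $R$; this is automatically a Morley sequence in $p = p_0$ over $R_0$, so by uniqueness of the joint $\mathcal{L}_0$-type of a Morley sequence we get $\tp_{\mathcal{L}_0}(t_1,\dots,t_n/R_0) = \tp_{\mathcal{L}_0}(t'_1,\dots,t'_n/R_0)$.  Monster-homogeneity yields an $\mathcal{L}_0$-automorphism $\sigma$ of $\mathcal{R}$ fixing $R_0$ with $\sigma(t_i)=t'_i$, and the transported function $f' = \sigma \circ f \circ \sigma^{-1}$ is $\mathcal{L}_0$-definable over $R_0 t'_2 \cdots t'_{n-1} \subseteq R t'_2 \cdots t'_{n-1}$.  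Since $\sigma$ is order-preserving and fixes the convex hull of $R_0^{>C(R_0)}$ setwise, $f'$ maps a cofinal sequence in $C(R)$ onto a coinitial sequence in the convex hull of $R^{>C(R)}$, contradicting property $\ast$ of $(R,C)$ via Proposition \ref{equivalent to star}.

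The main obstacle is the identification $p = p_0$ together with the attendant claim that Morley sequences in $p$ over $R$ have the same $\mathcal{L}_0$-type over $R_0$ as Morley sequences in $p_0$ over $R_0$; both hinge on chasing cofinality and coinitiality of $(c_i)$ and $(d_j)$ through the definitions carefully.  Once this is in place, the existential-transfer argument is just a matter of collapsing an infinite family of $C$-conditions into a single pair of order bounds, and the property $\ast$ argument becomes a routine automorphism transport, which together with Proposition \ref{substr} will deliver the quantifier elimination result of the section.
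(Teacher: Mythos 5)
Your proposal is correct and follows essentially the same route as the paper: the embedding assertion is reduced via Fact~\ref{Hodges} to transfer of existentials, which you handle by converting finitely many $C$- and $\neg C$-conditions into order bounds by a single cofinal $c^*$ and coinitial $d^*$ and then invoking $\mathcal{L}_0$-elementarity of $R_0 \preceq R$, exactly as the paper does (via Skolem functions after collapsing with $\max$/$\min$). For property $\ast$ of $(R_0,C)$ the paper simply asserts it is ``clear'' from Proposition~\ref{equivalent to star}(1); your automorphism-transport argument (identifying $p_0 = p$, transferring an $R_0$-Morley sequence to an $R$-Morley sequence of the same type over $R_0$, and moving a putative bad function along) is a careful spelling out of the detail the paper leaves implicit, and it is the right way to bridge from a Morley sequence over $R_0$ to one over $R$.
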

\begin{proof}
Note that by Proposition \ref{equivalent to star} (1), $(R_0,C)$ clearly satisfies $\ast$.  By Fact \ref{Hodges}, it suffices to show that any existential statement with parameters from $R_0$ true in $(R,C)$ is also true in $(R_0, C)$.  Note that any quantifier free formula in $\mathcal{L}_C$ is a finite disjunction of formulas of the form \[\theta(x)\land C(f_1(x))\land \dots \land C(f_n(x)) \land \dots \land \neg C(g_1(x))\land \dots \land \neg C(g_m(x))\] where $\theta$, each $f_i$, and each $g_j$ are $\mathcal{L}_0$-definable over $R_0$, and where $x$ may be a tuple. By setting \[f(x)=max(f_1(x),\dots, f_n(x))\mbox{ and } g(x)=min(g_1(x),\dots, g_m(x)),\] we see that every quantifier free formula is a finite disjunction of formulas \[\varphi(x) = \theta(x)\land C(f(x))\land \neg C(g(x)).\]
Note that in $(R,C)$, $C(x)$ iff $\bigvee_{i\in I}x<c_i$, and $\neg C(x)$ iff $\bigvee_{j\in J}x>d_j$. Thus, \[\varphi(R)= \theta(R)\cap \bigcup_{i\in I}f^{-1}(R^{<c_i})\cap \bigcup_{j\in J}g^{-1}(R^{>d_j}).\]  
Now assume that $(R,C)\models \exists x \varphi(x)$ is witnessed by $r$.  In particular, there are $i, j$ such that $r$ satisfies the formula $\theta(x)\land f(x)<c_i \land g(x)>d_j$.  As this set is $\mathcal{L}_0$-definable over $R_0$, and $R_0$ considered as an $\mathcal{L}_0$-structure has definable Skolem functions, there is $r_0\in\dcl_{\mathcal{L}_{0} } (R_0 (c_i )_{i \in I} (d_j )_{j\in J})$ also contained in this set, and thus $(R_0,C)\models \exists x \varphi(x)$.
\end{proof}

\begin{theorem}\label{qe}
Fix a model $(R,C)$ where the cut $C$ has property $\ast$.  Choose a sequence $(c_i)_{i\in I}$ cofinal in $C$ and a sequence $(d_j)_{j\in J}$ coinitial in $R^{>C}$.  Let $R_0$ be $\dcl_\mathcal{L}((c_i)_{i\in I} (d_j)_{j\in J})$ and let $\mathcal{L}_{R_0}$ be the language $\mathcal{L}$ with constant symbols for the $c_i$ and $d_j$.  We let $T_{R_0}$ be the theory of $(R,C)$ in $\mathcal{L}_{R_0}$.  Then $T_{R_0}$ is universally axiomatizable, has quantifier elimination and definable Skolem functions.
\end{theorem}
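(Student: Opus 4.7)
The plan is to prove the three conclusions simultaneously by establishing the stronger result that every $\mathcal{L}_{R_0}$-substructure $(A,C)$ of any model $(M,C) \models T_{R_0}$ is an elementary substructure of $(M,C)$. Universal axiomatizability is then immediate, since substructures of models are automatically models. Quantifier elimination follows by the substructure-completeness criterion: two models $(M_1,C), (M_2,C) \models T_{R_0}$ sharing a common substructure $(A,C)$ are both elementary extensions of $(A,C)$, so they agree on all $\mathcal{L}_{R_0}$-formulas with parameters from $A$. Definable Skolem functions follow from QE together with the $\mathcal{L}_0$-Skolem functions built into the presentation of $T_0$: after QE, the set of $y$ satisfying $\varphi(x,y)$ is quantifier-free $\mathcal{L}_{R_0}$-definable, hence a Boolean combination of $\mathcal{L}_0$-definable sets and $C$-preimages, and a Skolem witness is constructed by case analysis using $\mathcal{L}_0$-Skolem on the $\mathcal{L}_0$-pieces together with the constants $c_i, d_j$ to locate the $C$-boundaries.

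For the key elementarity claim, I would invoke Proposition \ref{substr} with $(A,C)$ in the role of $(R_0,C)$ and $(M,C)$ in the role of $(R,C)$. Two hypotheses must be verified. First, $C$ defines a cut in $A$ with property $\ast$: the cut exists because $R_0 \subseteq A$ already contains $c_i \in C(A)$ and $d_j \in A \setminus C(A)$ bracketing it; property $\ast$ is inherited from $R_0$ by writing $A = R_0\langle (a_\alpha)_{\alpha<\kappa}\rangle$ and applying Lemma \ref{superstructures} iteratively, since each adjoined $a_\alpha$ falls into one of the three cases of that lemma relative to the cut of the intermediate substructure. Second, $(M,C)$ embeds over $A$ into an elementary extension of $(A,C)$: by Fact \ref{Hodges}, it suffices to verify that every existential $\mathcal{L}_{R_0}$-sentence with parameters from $A$ true in $(M,C)$ is also true in $(A,C)$.

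To establish this existential-closure step I would adapt the proof of Lemma \ref{existential theory}: any quantifier-free formula decomposes as a disjunction of conjunctions of the form $\theta(x) \wedge C(f(x)) \wedge \neg C(g(x))$ with $\theta, f, g$ being $\mathcal{L}_0$-definable over $A$; given a witness $r \in M$ for some disjunct, one locates bracketing parameters (the constants $c_i, d_j$, and when $f(r)$ or $g(r)$ lie outside the portion of the cut controlled by $R_0$, additional elements of $A$) that force $C(f(r))$ and $\neg C(g(r))$ via $\mathcal{L}_0$-inequalities alone, and then invokes $\mathcal{L}_0$-Skolem functions (which $A$ carries, being an $\mathcal{L}_{R_0}$-substructure and hence closed under all $\mathcal{L}_0$-function symbols) to produce a witness in $A$. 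The main obstacle is precisely this adaptation to arbitrary $A$: in a general model $(M,C) \models T_{R_0}$ the sequences $(c_i), (d_j)$ need not be cofinal in $C(M)$ or coinitial in $M^{>C(M)}$, so the clean equivalences $C(x) \Leftrightarrow \bigvee_i x < c_i$ and $\neg C(x) \Leftrightarrow \bigvee_j x > d_j$ used in Lemma \ref{existential theory} are unavailable; the resolution is to use $A$'s own bracketing elements for the cut, together with the observation that $A \preceq M$ in the reduct to $\mathcal{L}_0$, which follows from $A$ being an $\mathcal{L}_0$-substructure closed under the $\mathcal{L}_0$-Skolem functions built into $T_0$.
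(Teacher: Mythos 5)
Your high-level architecture matches the paper's: prove universal axiomatizability, quantifier elimination, and definable Skolem functions by establishing that suitable substructures are elementary submodels, with Proposition \ref{substr} doing the heavy lifting. But the paper deliberately fixes a \emph{single} sufficiently saturated elementary extension $(\widetilde{R},C)$ of $(R_0,C)$ (obtained via Lemma \ref{existential theory}, where the constants $(c_i),(d_j)$ \emph{are} cofinal and coinitial in the original $(R,C)$) and then only ever runs the chain argument underlying Proposition \ref{substr}, namely Lemmas \ref{basestep} and \ref{elsubstr}, \emph{inside} $\widetilde{R}$, where the needed hypothesis ``$(R_0,C)\preceq$ ambient model'' is already in hand. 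You instead try to apply Proposition \ref{substr} to an arbitrary substructure $(A,C)$ of an arbitrary model $(M,C)\models T_{R_0}$, and this is where things go wrong.

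The first genuine gap is in the existential-closure step. To invoke Fact \ref{Hodges} you must show that any existential sentence with parameters from $A$ true in $(M,C)$ is true in $(A,C)$. Your proposed fix --- ``use $A$'s own bracketing elements for the cut'' --- only handles a witness $r\in M$ of $\theta(r)\wedge C(f(r))\wedge\neg C(g(r))$ when $f(r)$ is below some cofinal element of $C(A)$ and $g(r)$ is above some coinitial element of $A^{>C(A)}$. But in a general model, $C(M)$ can extend properly beyond the convex hull of $C(A)$, so $f(r)$ may lie strictly above $C(A)$ yet inside $C(M)$, and then no formula of the form $f(x)<c'$ with $c'\in C(A)$ is available; dually for $g(r)$. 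This is precisely the situation Lemma \ref{existential theory} avoids by choosing the $(c_i),(d_j)$ cofinal/coinitial in the \emph{ambient} model, and the paper never again needs an existential-closure argument for an arbitrary pair $A\subseteq M$: inside $\widetilde{R}$ it instead climbs from $R_0$ to $A$ one element at a time via Lemma \ref{elsubstr}, which uses the known elementarity $(R_0,C)\preceq(\widetilde{R},C)$ and never appeals to Hodges. The second gap is in your $\ast$-inheritance claim: Lemma \ref{superstructures} requires the adjoined elements to form a Morley sequence in $p$ or in $q$ with an extra condition, or to have definable closure disjoint from the cut. An arbitrary enumeration of a substructure $A$ does not decompose into such pieces; the paper needs Lemma \ref{ordered} (and Remarks \ref{reverse order}, \ref{reverse order 2}) to carefully reorder the generators so that the hypotheses of Lemma \ref{superstructures} apply at each stage. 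Without that reordering, ``each adjoined $a_\alpha$ falls into one of the three cases'' is simply false in general.
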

\marginpar{}
\begin{proof}
By Lemma \ref{existential theory}, $(R_0, C)$ has property $\ast$.  So we may apply Lemma \ref{existential theory} to embed a copy of $(R,C)$ into an elementary extension $(\widetilde{R}, C)$ of $(R_0,C)$.  Then we apply Proposition \ref{substr} to see that $(R_0, C)$ is an elementary substructure of the copy of $(R,C)$, showing that the $\mathcal{L}_{R_0}$-theory of $(R_0,C)$ is $T_{R_0}$. 

By choosing a sufficiently saturated $(\widetilde{R}, C)$, universal axiomatizability follows from showing that every substructure of $(\widetilde{R}, C)$ is a submodel, and model completeness follows from showing every submodel of $(\widetilde{R}, C)$ is an elementary submodel.  Since any $\mathcal{L}_{R_0}$-substructure contains $R_0$, both of these follow from Proposition \ref{substr}.

A model complete theory eliminates quantifiers precisely when $T^{\forall}$ has the amalgamation property.  Thus, model completeness together with universal axiomatizability implies quantifier elimination. Furthermore, quantifier elimination together with uniform axiomatizability implies the existence of definable Skolem functions.
\end{proof}

\begin{corollary}
If $f\colon R\to R$ is $(R,C)$-definable, then there are $R$-definable functions $f_1 , \dots ,f_k \colon R\to R$ such that for each $a\in R$, $f(a)=f_i (a)$ for some $i\in \{ 1, \dots ,k\}$.  
\end{corollary}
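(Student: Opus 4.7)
The plan is to apply Theorem \ref{qe}. After adjoining the constants $(c_i)_{i\in I}$ and $(d_j)_{j\in J}$ of that theorem, the graph of $f$ is defined by a quantifier-free $\mathcal{L}_{R_0}$-formula $\varphi(x,y)$ with parameters from $R$. Putting $\varphi$ in disjunctive normal form, $\varphi \equiv \bigvee_{i=1}^{N} \theta_i$, each disjunct $\theta_i(x,y)$ is a conjunction of an $\mathcal{L}_0$-formula $\eta_i(x,y)$ with finitely many formulas of the form $C(s_{ij}(x,y))$ or $\neg C(s_{ij}(x,y))$, where the $s_{ij}$ are $\mathcal{L}_0$-terms and all parameters come from $R$. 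The set defined by $\theta_i$ is contained in the graph of $f$, hence is the graph of a partial restriction of $f$. It therefore suffices, for each $i$, to produce an $R$-definable function (in the $\mathcal{L}_0$-sense) that agrees with $f$ on the set of $x$ where $\theta_i(x,f(x))$ holds; extending each such function to all of $R$ (for instance by the value $0$ outside its natural domain) then yields the required finite collection $f_1, \dots, f_k$.

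Fix $i$ and apply o-minimal cell decomposition in the $\mathcal{L}_0$-reduct to $\eta_i(x,y)$. This produces finitely many $\mathcal{L}_0$-definable (over $R$) continuous branch functions $\phi_1, \dots, \phi_M$ whose graphs are the graph cells, together with finitely many open in-between cells of the form $\{(x,y) : x\in E,\ \alpha(x) < y < \beta(x)\}$, with $\alpha, \beta$ again $\mathcal{L}_0$-definable over $R$. Whenever $\theta_i(x,f(x))$ holds, $(x,f(x))$ lies in exactly one cell; if that cell is a graph cell, then $f(x) = \phi_n(x)$ for some $n$, and we are done. The key step, where I expect the main difficulty to lie, is to rule out the possibility that $(x,f(x))$ lies in an in-between cell.

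Suppose for contradiction that $(x,f(x))$ lies in an in-between cell with fiber $(\alpha(x), \beta(x))$, and set
\[ K = \bigl\{ y \in R : \alpha(x) < y < \beta(x) \text{ and every } C\text{-conjunct of } \theta_i(x,y) \text{ holds} \bigr\}. \]
Since the $\theta_i$-set lies inside the graph of $f$, one has $K = \{f(x)\}$. On the other hand, each term $s_{ij}(x,\cdot)$ is continuous and piecewise monotone on $(\alpha(x),\beta(x))$ by $\mathcal{L}_0$-o-minimality, and both $C$ and $R\setminus C$ are convex, so each $C$- or $\neg C$-conjunct defines a finite union of convex subsets of $(\alpha(x),\beta(x))$; hence $K$ is itself a finite union of convex subsets of $R$. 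Take a non-empty convex component of $K$, viewed as an interval $(c,d) \subseteq \mathcal{R}$. Each endpoint of this interval is either equal to $\alpha(x)$ or $\beta(x)$ (which lie in $R$), or else is a point $y^* \in \mathcal{R}$ at which some $s_{ij}(x, y^*)$ lies in the boundary of $C$ in $\mathcal{R}$. In the latter case, because $s_{ij}(x,\cdot)$ sends $R$ to $R$ and the boundary of the cut $C$ in $\mathcal{R}$ contains no element of $R$, such a $y^*$ lies in $\mathcal{R} \setminus R$ and realizes a cut in $R$, so the set of $R$-points below $y^*$ has no maximum and the set of $R$-points above $y^*$ has no minimum. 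Combined with the density of the ordered group $R$, this forces $(c,d) \cap R$, whenever non-empty, to contain at least two elements of $R$, contradicting $K = \{f(x)\}$. This rules out the in-between case and completes the proof.
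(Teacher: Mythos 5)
Your proof is correct, but it takes a genuinely different and considerably longer route than the paper's. The paper's proof is a one-line appeal to the conjunction of universal axiomatizability and definable Skolem functions established in Theorem \ref{qe}: since $T_{R_0}$ is universally axiomatizable and model complete, the $\mathcal{L}_{R_0}$-substructure of $(R,C)$ generated by any $a\in R$ (together with the parameters of $f$) is an elementary submodel, hence contains $f(a)$; but the predicate $C$ contributes no new elements to a generated substructure, so that substructure is just $\dcl_{\mathcal{L}_0}(R_0 a\bar b)$ and $f(a)$ is the value of an $\mathcal{L}_0$-term in $a$ over $R$, and compactness reduces to finitely many such terms. By contrast, you unwind the quantifier-free formula explicitly via disjunctive normal form and o-minimal cell decomposition, and then argue that a graph cannot pass through an ``in-between'' cell compatibly with the $C$-literals. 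Your argument does work: with a fine enough further decomposition making all the terms $s_{ij}(x,\cdot)$ continuous on the fibre $(\alpha(x),\beta(x))$, each $C$- or $\neg C$-literal pulls back to a relatively open set (because $C(R)$ and its complement are open, the cut having neither a maximum below nor a minimum above in $R$), so $K$ is open and hence infinite once non-empty, already contradicting $K=\{f(x)\}$ without the detour through endpoints realizing cuts in $\mathcal{R}$. Your route buys an explicit, geometrically transparent description, at the cost of tracking continuity/monotonicity refinements that the paper's abstract argument sidesteps entirely.
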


\begin{proof}
This is a consequence of $T_A$ having definable Skolem functions and a universal axiomatization. 
\end{proof}

Finally, we have a partial converse to Theorem \ref{qe}.

\begin{proposition}
Suppose that the theory of $(R,C)$ has quantifier elimination and is universally axiomatizable. Then any sufficiently saturated elementary extension $(\widetilde{R},C)$ has property $\ast$.
\end{proposition}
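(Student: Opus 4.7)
The plan is to argue by contradiction: assume $(\widetilde R,C)$ fails property $\ast$ while $T=\Th(R,C)$ has QE and a universal axiomatisation. The aim is to leverage quantifier elimination together with universal axiomatisability to transport the failure of property $\ast$ inside $\widetilde R$ itself, and then to close up using Theorem \ref{elextnew}.

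First, I would fix a minimal-length Morley sequence $t_1,\dots,t_n$ in $p|_{\widetilde R}$ together with an $\mathcal L_0$-definable bad function $f(x)=f_0(x,\widetilde r,\underline t)$, where $\widetilde r\in \widetilde R$ and $\underline t=(t_2,\dots,t_{n-1})$, so that $f$ maps a cofinal sequence in $C(\widetilde R)$ onto a coinitial sequence in the convex hull of $\widetilde R^{>C(\widetilde R)}$ in $\mathcal R$. Universal axiomatisability guarantees that the $\mathcal L$-substructure $B:=\widetilde R\langle\underline t\rangle$ of $\mathcal R$ is a model of $T$, and quantifier elimination then upgrades the inclusions to elementary ones: $\widetilde R\preceq B\preceq\mathcal R$.

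Second, I would encode the bad behaviour as a partial $\mathcal L$-type $\Theta(\vec y)$ over $\widetilde R$. For each $(c,r)\in C(\widetilde R)\times \widetilde R^{>C(\widetilde R)}$ the type would contain
\[
\exists c'\bigl(C(c')\wedge c<c'\wedge c<f_0(c',\widetilde r,\vec y)<r\bigr),
\]
together with $\mathcal L_0$-formulas that pin down the $\mathcal L_0$-type of $\vec y$ to agree with that of $\underline t$. Elementarity of $\widetilde R\preceq B$ makes $\Theta$ finitely satisfiable in $\widetilde R$. By dropping to a small elementary submodel $R_1\preceq \widetilde R$ that contains $R$, $\widetilde r$, and fixed cofinal and coinitial sequences of $C(R_1)$ and $R_1^{>C(R_1)}$, the restriction $\Theta|_{R_1}$ becomes a type of size at most $|R_1|$, and sufficient saturation of $\widetilde R$ yields a realisation $\vec y^*\in \widetilde R$. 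Writing $g(x):=f_0(x,\widetilde r,\vec y^*)$, the resulting function is $R_1\vec y^*$-definable and witnesses an internal version of the bad behaviour of $f$ relative to the cut in $R_1$.

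The main obstacle is the last step: turning the existence of $g$ and $\vec y^*$ into a contradiction with quantifier elimination. My strategy is to manufacture, from these data, an element $a\in\mathcal R\setminus R_1\langle\vec y^*\rangle$ in the gap over $R_1\langle\vec y^*\rangle$ together with $R_1\langle\vec y^*\rangle$-definable functions $F$ and $G$ fulfilling the conditions of Theorem \ref{elextnew}, so that $(R_1\langle\vec y^*\rangle ,C)$ would fail to be an elementary submodel of $(R_1\langle\vec y^*,a\rangle,C)$. This would contradict quantifier elimination, which forces every $\mathcal L$-substructure of $\mathcal R$ (in particular $R_1\langle\vec y^*,a\rangle$) to be an elementary submodel of $\mathcal R$, hence elementary over any $\mathcal L$-substructure it contains. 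The delicate work lies in using the content of the realisation $\vec y^*$ of $\Theta$ to manufacture $a$, $F$ and $G$ explicitly from the formula $f_0$ and the $\mathcal L_0$-type of $\underline t$ over $R$.
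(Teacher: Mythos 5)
Your proposal takes a genuinely different route from the paper's, and at present it has a substantive gap. The paper does not argue by contradiction from a hypothetical bad Morley sequence. Instead it proceeds constructively: working inside an $\mathcal{L}_0$-monster it repeatedly adjoins elements realizing the cut of $C$ while always interpreting $C$ as the convex hull of $C(R)$, producing a model $(R_1,C)$ in which the cofinality of $C(R_1)$ and the coinitiality of $R_1^{>C(R_1)}$ differ. Property $\ast$ then holds in $(R_1,C)$ for purely order-theoretic reasons --- no function can carry a cofinal sequence onto a coinitial sequence of a different cofinality, so condition (1) of Proposition \ref{equivalent to star} is vacuous. Universal axiomatizability plus model completeness (from QE) then make $(R,C)\preceq(R_1,C)$, $(R_1,C)$ embeds into $(\widetilde R,C)$, and one climbs from $(R_1,C)$ up to $(\widetilde R,C)$ using Lemma \ref{superstructures} to carry $\ast$ along. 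The whole argument is an end-run around the difficulty you are trying to face head on.

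Your compactness strategy runs into trouble precisely where you flag the ``delicate work.'' Two concrete issues: first, the formula $\exists c'\,\bigl(C(c')\wedge c<c'\wedge c<f_0(c',\widetilde r,\vec y)<r\bigr)$ interprets $C$ in the ambient model, so once you realize the restricted type $\Theta|_{R_1}$ by $\vec y^*\in\widetilde R$, the witnesses $c'$ range over all of $C(\widetilde R)$, not over a set cofinal in $C(R_1)$; this loses the crucial information that the bad $f_{\underline t}$ acts on a \emph{cofinal} part of the small cut. Second, and relatedly, once $\vec y^*$ lands in $\widetilde R$, the values $g(c')=f_0(c',\widetilde r,\vec y^*)$ all lie in $\widetilde R$, where there is no ``gap'' between $C$ and its complement --- each $g(c')$ is simply either in $C(\widetilde R)$ or not. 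Nothing in $\Theta|_{R_1}$ forces the configuration you need to invoke the easy direction of Theorem \ref{elextnew}; the restricted type is consistent with perfectly harmless behavior of $g$. Without a concrete construction of the element $a$ and the functions $F,G$ from the data of $\vec y^*$, the contradiction is not established, and it is not clear one can be extracted. I'd encourage you to compare with the paper's cofinality-mismatch trick, which is both shorter and avoids these pitfalls entirely.
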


\begin{proof}
 Consider $R\preceq \mathcal{R}$ but without choosing an interpretation of the predicate $C$ in $\mathcal{R}$.  Choose $a\in \mathcal{R}$ realizing the cut corresponding to $C(R)$ and define $C(\ra)$ to be the convex hull of $C(R)$ in $\ra$.  Repeat this process until one has built $(R_1, C)$ with the coinitiality  of $R_1^{>C(R_1)}$ greater than the cofinality of $C(R_1)$ (which stay equal to the cofinality of $C(R)$).  (Or if the  coinitiality  of $R^{>C(R)}$ was uncountable, repeat only countably many times to obtain $(R_1, C)$ with the coinitiality  of $R_1^{>C(R_1)}$ countable.)
 
 Since the cofinality of $C(R_1)$ and the cointiality of $R_1^{>C(R_1)}$ differ, Proposition \ref{star} implies that $(R_1, C)$ has property $\ast$.  $(R,C)$ is a substructure of $(R_1, C)$, and thus by quantifier elimination and universal axiomatizabilty, $(R,C)\preceq (R_1, C)$.  Moreover, $(R_1,C)$ embeds into any sufficiently saturated elementary extension $(\widetilde{R}, C)$ of $(R,C)$, and we may build up to $(\widetilde{R}, C)$ from $(R_1, C)$ with each step preserving property $\ast$.
\end{proof}

\end{section}

\begin{section}{Appendix -- convex valuations with o-minimal residue field}

Here we show that if $C$ is a predicate for a convex subring (hence valuation ring) of $R$ with o-minimal residue field, then the cut defined by $C$ has property $\ast$.

In this section, we assume that $C=V$ is realized as a proper convex subring of its ambient o-minimal field.
For an o-minimal field $S$ we denote by $\overline{x}$ the residue of $x \in V(S)$, and
we let $$I_S =\{x\in S\colon 0\leq x \leq 1\}.$$  For $X\subseteq S^{1+n}$, we set \[X(a):=\{ (\underline{x})\in S^n \colon (a, \underline{x}) \in X\},\] where $\underline{x}=(x_1 \dots ,x_n)$.  We assume that $p\in S_1 (\mathcal{R})$ is the global $R$-invariant type $$\{r<x\colon r\in V(R)\} \cup \{x<r\colon r\in \mathcal{R}^{>V(R)}\}.$$

\begin{definition} For an o-minimal field $S$, we say that 
$(S,V)\models \Sigma (n)$ if there is $\epsilon_0 \in \ma^{>0}$ such that for each $\epsilon \in \ma^{>\epsilon_0 }$,
\[ \overline{X(\epsilon_0 )}=\overline{X(\epsilon )} \] where $X\subseteq I_{S}^{1+n}$ is definable in $S$, and $\ma$ is the maximal ideal of $V(S)$.
\end{definition}
We shall use the following two facts. 
\begin{fact}[\cite{ax}, Theorem 1.2]\label{ominresfield}
The residue field with structure induced from $R$ (equivalently, from $(R,V)$ -- see \cite{mc}, Theorem 4.6) is o-minimal iff $(R,V)\models \Sigma (1)$.
\end{fact}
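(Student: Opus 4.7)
My plan relies on the bridge, from \cite{mc} Theorem 4.6, that the induced structure on $\bk$ from $(R,V)$ agrees with that from $R$, and on reducing every induced-definable subset of $\bk$ to a single residue $\overline{X(\epsilon)}$ of an $R$-definable family $X\subseteq I_R^2$ at some infinitesimal parameter $\epsilon\in\ma^{>0}$. The key preparatory step is a normal form: every induced-definable $Y\subseteq\bk$ equals $\overline{X(\epsilon)}$ for some such $X$ and for $\epsilon$ in some cofinal subset of $\ma^{>0}$. Morally, the convexity of $V$ lets one absorb every occurrence of the predicate $V$ in an $(R,V)$-formula defining a subset of $V$ into a single existentially quantified parameter $\epsilon$ close to the ``top'' of $\ma$.

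Given the normal form, the direction $(R,V)\models\Sigma(1)\Rightarrow\bk$ o-minimal is immediate. For an induced-definable $Y\subseteq\bk$, write $Y=\overline{X(\epsilon)}$ for a suitable family $X$ and any $\epsilon$ in $\ma$ exceeding the threshold $\epsilon_0$ from $\Sigma(1)$. Then $Y=\overline{X(\epsilon_0)}$, and by o-minimality of $R$ the fiber $X(\epsilon_0)\subseteq I_R$ is a finite union of points and open intervals. The residue map sends each such interval to either a closed interval or a single point of $\bk$ (depending on whether its length is infinitesimal), so $Y$ is a finite union of intervals and points.

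For the converse, assume $\bk$ is o-minimal and $\Sigma(1)$ fails, witnessed by an $R$-definable $X\subseteq I_R^2$. By o-minimal cell decomposition in $R$, on a suitable infinitesimal interval the fibers take the uniform form $X(\epsilon)=\bigcup_i(a_i(\epsilon),b_i(\epsilon))\cup\bigcup_j\{c_j(\epsilon)\}$ with $R$-definable continuous endpoints, so the failure of $\Sigma(1)$ forces at least one of the maps $\epsilon\mapsto\overline{a_i(\epsilon)}$, $\overline{b_i(\epsilon)}$, $\overline{c_j(\epsilon)}$ to be non-constant on a cofinal subset of $\ma^{>0}$. Forming $\limsup_{\epsilon}\overline{X(\epsilon)}:=\bigcap_{\epsilon_0\in\ma^{>0}}\bigcup_{\epsilon\in\ma^{>\epsilon_0}}\overline{X(\epsilon)}$ and the corresponding $\liminf$, both induced-definable subsets of $\bk$, one extracts a nontrivial induced-definable set whose description is governed by the varying endpoints, and which a careful analysis shows cannot be a finite union of intervals and points, contradicting o-minimality of $\bk$.

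The principal obstacle is the normal form: reducing an arbitrary $(R,V)$-definable subset of $V$, modulo $\ma$, to the residue of an $R$-definable family evaluated at a single infinitesimal parameter. This presumably requires an induction on formula complexity in $(R,V)$ combined with the observation that $V$, as a cut, is externally definable in $R$; each occurrence of the predicate $V$ is then replaced by a bound $|x|<\epsilon^{-1}$ for some $\epsilon\in\ma^{>0}$, and the finitely many parameters so introduced are absorbed into a single $\epsilon$ sufficiently close to the top of $\ma$. The converse direction carries a secondary obstacle in explicitly exhibiting the induced-definable subset of $\bk$ that witnesses failure of o-minimality from the failure of stability of the residue family.
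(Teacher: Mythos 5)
This statement is Fact~\ref{ominresfield}, cited without proof from \cite{ax}, Theorem~1.2; the paper itself supplies no proof to compare against. Judging your proposal on its own terms, the ``normal form'' you posit as a preparatory step cannot be established independently of $\Sigma(1)$: it already implies the conclusion. Each set $\overline{X(\epsilon)}$ is the residue of the $R$-definable set $X(\epsilon)\subseteq I_R$, hence is \emph{always} a finite union of points and closed intervals of $\bk$ (a point maps to a point, and $\overline{(a,b)}$ is $\{\overline{a}\}$ if $b-a\in\ma$ and $[\overline{a},\overline{b}]$ otherwise). So if every induced-definable $Y\subseteq\bk$ had the form $\overline{X(\epsilon)}$, $\bk$ would be o-minimal unconditionally, which Example~\ref{nonomin} refutes. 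Deriving the normal form \emph{from} $\Sigma(1)$ is therefore the entire content of the forward implication, not a preliminary that makes it ``immediate.''

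You do flag the normal form as the principal obstacle, but the route you sketch faces a structural mismatch. For complements one is led to families such as $E_\epsilon=\{x\in I_R:(x-\epsilon,x+\epsilon)\cap D=\emptyset\}$ and needs $\overline{E_\epsilon}$ to stabilize as $\epsilon$ decreases toward $0$ in $\ma^{>0}$; but $\Sigma(1)$ stabilizes a family only as $\epsilon$ \emph{increases} past a threshold $\epsilon_0$ toward the top of $\ma$, and there is no order-reversing $R$-definable bijection of $\ma^{>0}$ onto itself to convert one to the other. Moreover, controlling projections of sets $\overline{D}$ with $D\subseteq I_R^{1+n}$ requires $\Sigma(n)$ for all $n$, which is a separate nontrivial theorem (Fact~\ref{sigma1implesigman}), not a formal consequence of $\Sigma(1)$. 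Finally, for the converse direction, forming $\bigcap_{\epsilon_0}\bigcup_{\epsilon\in\ma^{>\epsilon_0}}\overline{X(\epsilon)}$ and its companion is a reasonable instinct, but you give no argument that these sets are induced-definable (the unions and intersections run over the non-$R$-definable set $\ma^{>0}$) nor that what you obtain fails to be a finite union of points and intervals; that ``careful analysis'' is exactly where the work lies.
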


\begin{fact}[M., van den Dries \cite{jalou}, Theorem 1.2]\label{sigma1implesigman}
For any $n$, \[(R,V) \models \Sigma (1) \Rightarrow \Sigma (n) .\]
\end{fact}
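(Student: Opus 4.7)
The approach is induction on $n$, with base case $n=1$ being the hypothesis itself. For the inductive step, fix a definable $X\subseteq I_R^{1+(n+1)}$; we seek $\epsilon_0 \in \mathfrak{m}^{>0}$ such that $\overline{X(\epsilon)}=\overline{X(\epsilon_0)}$ for all $\epsilon \in \mathfrak{m}^{>\epsilon_0}$. I would decompose $X$ by o-minimal cell decomposition in $R$ into finitely many cells $C_1,\dots,C_N\subseteq I_R^{1+(n+1)}$. Since $\overline{X(\epsilon)}=\bigcup_j \overline{C_j(\epsilon)}$ and finite unions are preserved under residue, it suffices to stabilize each $\overline{C_j(\epsilon)}$ separately.

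Each cell is described by nested continuous definable bounding functions, one coordinate at a time. I would apply $\Sigma(1)$ to each one-variable slice (with the remaining coordinates frozen) to get pointwise residual stability of the bounding functions in $\epsilon$, and invoke the inductive hypothesis $\Sigma(n)$ to control the projection of $C$ onto its first $n+1$ coordinates. Reassembling the cell from its nested bounds should then yield stabilization of $\overline{C(\epsilon)}$.

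The main obstacle is uniformity in the frozen parameters: a pointwise application of $\Sigma(1)$ produces a threshold $\epsilon_0$ depending on those coordinates, and the infimum over parameter space could a priori drop to $0$. To overcome this I would use Fact \ref{ominresfield}: $\Sigma(1)$ implies that the residue field $\bar R$ with its induced structure is o-minimal, so definable families of subsets of $\bar R^{n+1}$ enjoy uniform finiteness. This should absorb the parameter dependence of the threshold. Alternatively, one can argue by compactness in a sufficiently saturated elementary extension: if uniform stabilization failed, one could realize a type producing an infinite family of pairwise distinct residual sets arising from the same $R$-formula, contradicting uniform finiteness in $\bar R$.

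I expect the main technical work to lie in the bookkeeping that tracks how nested cell-defining functions interact with the residue map as the non-$\epsilon$ coordinates range over $I_R$, in particular ensuring that the uniform-finiteness-in-$\bar R$ argument can be aligned with the cell structure coming from $R$ rather than from the induced structure on $\bar R$.
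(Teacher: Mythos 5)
The paper does not prove this statement; it is a \emph{Fact}, i.e.\ a cited result, attributed to Theorem 1.2 of van den Dries--Ma\v{r}\'{i}kov\'{a} \cite{jalou}, which deduces it from a triangulation theorem for $(R,V)$ compatible with the standard part map. So there is no in-paper argument to compare against, and your sketch has to be judged on its own terms.

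On those terms, you correctly locate the crux --- the threshold $\epsilon_0$ depends on the frozen parameters and could a priori drift down to $0$ --- but the proposed remedy does not engage with it. Uniform finiteness in the o-minimal residue field $\bk$ constrains $\bk$-definable families: for a $\bk$-definable $Y\subseteq \bk^{m+n}$, the fibers $Y(\bar a)$ take finitely many distinct shapes as $\bar a$ ranges over $\bk^m$. But the family $\{\overline{X(\epsilon)}\}_{\epsilon\in\ma^{>0}}$ is indexed by $\ma^{>0}\subseteq R$, all of whose elements have residue $0$; it is not presented as a $\bk$-definable family of subsets of $\bk^n$, and there is no evident way to convert it into one, so uniform finiteness in $\bk$ gives no traction on $\epsilon_0$. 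The compactness variant you mention fails for the same reason: it would produce infinitely many distinct residual sets indexed by a non-$\bk$-definable index set, which contradicts nothing about $\bk$. Bridging $R$-definable families of residual sets to $\bk$-definable objects in a controlled way is exactly the hard content that the triangulation-with-standard-part-map machinery of \cite{jalou} supplies. There is also a second gap in the ``reassembly'' step: the residue $\overline{C(\epsilon)}$ of a cell bounded by $f<g$ over a base $D$ is not determined by $\overline{D(\epsilon)}$ together with fiberwise residues of $f$ and $g$, because the set of residues of a fiber $\{y\colon f(\epsilon,x)<y<g(\epsilon,x)\}$ can depend on which lift $x\in D(\epsilon)$ of a given residue one picks, not merely on the residue itself. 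So the reduction to one-variable slices does not automatically reassemble; your sketch identifies the right pressure points, but the technical content that overcomes them is missing.
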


\begin{lemma}\label{sigma}
Suppose $(R,V)\models \Sigma (1)$, $t$ realizes $p|_R$, and let $R_t =R\langle t \rangle$. Suppose that for each $a\in R_t$ with $V(R)<a<R^{>V(R)}$ one has $V(a)$. Then $(R_t ,V )\models \Sigma (1)$. 
\end{lemma}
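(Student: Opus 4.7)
The plan is to deduce $\Sigma(1)$ for $(R_t,V)$ by expressing any $R_t$-definable set as a fibre of an $R$-definable family at $s=t$, and then invoking the stronger statement $\Sigma(n)$ in $(R,V)$, available for every $n$ by Fact \ref{sigma1implesigman}. Fix an $R_t$-definable $X \subseteq I_{R_t}^{1+1}$; the task is to produce $\epsilon_0 \in \ma(R_t)^{>0}$ with $\overline{X(\epsilon_0)}=\overline{X(\epsilon)}$ for every $\epsilon \in \ma(R_t)^{>\epsilon_0}$. Since $R_t = R\langle t\rangle$, I can write $X(\epsilon,y) \iff \phi(t,\epsilon,y,\bar r)$ for some $\mathcal{L}_0$-formula $\phi$ and parameters $\bar r \in R$.

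First I would apply $\Sigma(2)$ for $(R,V)$ to the $R$-definable set $\tilde Y := \{(\epsilon,s,y)\in I_R^3 : \phi(s,\epsilon,y,\bar r)\}$, producing a single $\epsilon_1 \in \ma(R)^{>0}$ such that the residue $\overline{\tilde Y(\epsilon)} \subseteq \overline{R}^2$ is constant on $\epsilon \in \ma(R)^{>\epsilon_1}$. This captures uniform-in-$s$ stabilisation of the fibre residues $\overline{\{y \in I_R : \phi(s,\epsilon,y,\bar r)\}}$ as $\epsilon$ shrinks through $\ma(R)$. In parallel, $R$-cell decomposition of $\tilde Y$ with respect to the projections $(\epsilon,s,y)\mapsto(\epsilon,s)\mapsto \epsilon$ presents $\tilde Y$ as a finite union of cells whose defining continuous functions are $R$-definable, and $X$ is then read off as the corresponding union of $R_t$-cells obtained by evaluating these functions at $s = t$.

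Second I would transfer the stabilisation to the fibre at $s=t$. The hypothesis that every $a \in R_t$ realising the cut above $V(R)$ lies in $V(R_t)$, and in particular that $V(t)$, forces $V(R_t)$ to be the convex hull of $V(R)\cup\{t\}$ in $R_t$ and thereby pins down $\ma(R_t)$: every $\epsilon \in \ma(R_t)^{>0}$ either already lies in $\ma(R)$ or has the form $g(t)$ for some $R$-definable $g$ whose values enter $\ma(R)$ as $s$ approaches $t$ from within $V(R)$. Applying $\Sigma(n)$ to auxiliary $R$-definable families built from the cell-defining functions, I extract finitely many $R$-definable threshold functions $\epsilon_i : R \to R$, evaluate them at $s=t$, and set $\epsilon_0 \in \ma(R_t)^{>0}$ below all the $\epsilon_i(t)$. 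For any $\epsilon \in \ma(R_t)^{>\epsilon_0}$ the cell structure of the slice $X(\epsilon)$ becomes frozen, forcing $\overline{X(\epsilon)}=\overline{X(\epsilon_0)}$ in $\overline{R_t}$.

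I expect the main obstacle to be precisely this last transfer: handling $\epsilon \in \ma(R_t)\setminus \ma(R)$, i.e., the ``new'' infinitesimals arising as $R$-definable functions of $t$, since $\Sigma(n)$ for $R$ only directly controls behaviour along $\ma(R)$. The plan to get around it is the cell-decomposition argument above: once $\epsilon_0$ dominates every threshold $\epsilon_i(t)$ obtained by evaluating the uniform $\Sigma(2)$-witnesses at $t$, any $\epsilon \in \ma(R_t)^{>\epsilon_0}$ forces all cell-defining functions of $\tilde Y$ into the stable regime determined by $\epsilon_1$ in $R$, which in turn forces their residues in $\overline{R_t}$ to coincide.
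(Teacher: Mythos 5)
Your high-level strategy matches the paper's: realize the new-parameter-definable set as a fibre of an $R$-definable family, invoke $\Sigma(n)$ (via Fact~\ref{sigma1implesigman}) in $R$, and then transfer the stabilization point to $R_t$. You also correctly pinpoint the real difficulty (new infinitesimals $\epsilon\in\ma(R_t)\setminus\ma(R)$) and correctly observe that, under the hypothesis, $\ma(R_t)$ is the convex hull of $\ma(R)$ in $R_t$. Two things, however, keep this from being a proof.

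First, a small but real slip: you set up $\tilde Y\subseteq I_R^3$ and then want the fibre at $s=t$, but $t>V(R)$, so $t\notin I_{R_t}$ and that fibre is empty. One must reparametrize by an element of $I_{R_t}$ realizing a cut in $R$ --- the paper uses $1/t\in\ma(R_t)$ and writes the putative counterexample as $f(\tfrac{1}{t},\cdot)$ for $R$-definable $f\colon I_{R}^{2}\to I_{R}$. This is a straightforward fix but needs to be made.

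Second, and more seriously, the transfer step is not carried out. The phrases ``extract finitely many $R$-definable threshold functions $\epsilon_i$'', ``the cell structure of the slice becomes frozen'', and ``forcing the residues to coincide'' do not constitute an argument: you do not say which functions these are, why finitely many suffice uniformly, nor why freezing the $\mathcal{L}_0$-cell structure of a slice over $R$ controls residues in $V(R_t)/\ma(R_t)$ (recall the residue map is not $\mathcal{L}_0$-definable, so cell decomposition alone does not interact with it). The paper's transfer uses a genuinely different and essential idea that is absent from your sketch: arguing by contradiction, after using Fact~\ref{sigma1implesigman} to fix $\epsilon_0\in\ma(R)^{>0}$ stabilizing the residue of the slice of $\Gamma f$, one reduces (via the convex-hull description of $\ma(R_t)$) a hypothetical witness $\epsilon$ to an element of $\ma(R)$; one then uses $\Sigma(1)$ in $R$ to see that on a suitable $R$-interval $[a,b]$ the residue of the slice is the graph of a function, so that $g(x):=|f(x,\epsilon)-f(x,\epsilon_0)|$ takes values in $\ma(R)$ for $x\in[a,b]\cap R$; finally, because $1/t$ realizes a cut in $R$ and $g$ is $R$-definable, continuous and monotone on a shrunken interval $(a,b)\ni 1/t$, the value $g(1/t)$ is sandwiched between values of $g$ at points of $R$ on either side of the cut, hence lies in $\ma(R_t)$, contradicting $\overline{f(\tfrac{1}{t},\epsilon_0)}\neq\overline{f(\tfrac{1}{t},\epsilon)}$. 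This ``sandwich across the cut'' is the heart of the transfer, and without it (or some replacement of comparable force) the step from $\Sigma(n)$ over $R$ to $\Sigma(1)$ over $R_t$ does not go through.
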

\begin{proof}
We shall denote by $\ma$ and $\ma_t$ the maximal ideal of $V(R)$ and of $V(R_t)$ respectively, and note that $\ma_t$ is the convex hull of $\ma$ in $R_t$.
Assume to the contrary $(R_t ,V )\models \neg \Sigma (1)$.  Then there is an $R$-definable function $f\colon I_{R_t}^{2} \to I_{R_t}$ such that for all $\epsilon \in \ma_{t}$ there is $\delta_{\epsilon} \in \ma_{t}^{>\epsilon}$ with $$\overline{f (\frac{1}{t}, \epsilon )} \not=\overline{f (\frac{1}{t}, \delta_{\epsilon })}.$$
By Fact \ref{sigma1implesigman}, we can find $\epsilon_0 \in \ma^{>0}$ such that 
\begin{equation} \label{eq: sameresfiber}
\overline{\big( {\Gamma f \cap (I_{R}\times \{ \epsilon \} \times I_{R})} \big)}= \overline{ \big({\Gamma f \cap (I_{R}\times \{ \epsilon_0 \} \times I_{R})}\big)} \in V(R)/\ma
\end{equation}
for all $\epsilon \in \ma^{>\epsilon_0}$.  We shall show that then $\overline {f(\frac{1}{t},\epsilon_0 )}=\overline {f(\frac{1}{t},\epsilon )} \in V(R_t)/\ma_t$ for all $\epsilon \in \ma_{t}^{>\epsilon_0 }$.

Fix $\epsilon \in \ma_{t}^{>\epsilon_0}$ such that $\overline{f(\frac{1}{t},\epsilon_0 )}\not=\overline{f (\frac{1}{t},\epsilon )}$ (where both residues are elements of $V(R_t)/\ma_t$). Since $\ma_t$ is the convex hull of $\ma$ in $R_t$, we may assume that $\epsilon \in \ma$.
Since $(R,V)\models \Sigma (1)$, there are $a\in \ma^{>0}$ and $b\in I_{R}^{> \ma}$ such that 

\begin{equation} \label{eq: graphs}
\begin{split}
&\overline{\Gamma f \cap ([a,b]\times \{ \epsilon \} \times I_{R})} \mbox{ and } \overline{\Gamma f \cap ([a,b]\times \{ \epsilon_0 \} \times I_{R})}\\ 
&\mbox{ are graphs of functions } [0,\overline{b}]\to V(R)/\ma .
\end{split}
\end{equation}
Define $g(x)=|f(x,\epsilon ) - f(x, \epsilon_0 )|$ for $x\in [a,b] \subseteq R_{t}$.  By \ref{eq: sameresfiber} and \ref{eq: graphs}, $g(x) \in \ma$ for all $x\in [a,b]\subseteq R$. After possibly shrinking $[a,b] \subseteq R$ subject to the condition that $a<\frac{1}{t}<b$, we may moreover assume that $g$ is continuous and monotone on $(a,b)$, contradicting $g(\frac{1}{t})>\ma$.
\end{proof}

\begin{proposition} \label{Sigma implies star}
If $(R,V)\models \Sigma (1)$, then the cut corresponding to $V$ has property $\ast$. 
\end{proposition}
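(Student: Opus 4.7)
Let $t_1,\ldots,t_n$ be a finite Morley sequence in $p$ over $R$, and set $\underline{t} := t_2,\ldots,t_{n-1}$ and $R' := R\langle \underline{t}\rangle$. By Proposition \ref{equivalent to star}(1), the task reduces to showing that no $\mathcal{L}_0$-definable over $R\underline{t}$ function $f$ sends a cofinal sequence in $V(R)$ onto a coinitial sequence in the convex hull of $R^{>V(R)}$ in $\mathcal R$. My plan has two phases: first propagate $\Sigma(1)$ along the Morley sequence via Lemma \ref{sigma}, and then exploit the residue stabilization on the enriched structure $R'$ to exclude any such function.

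For Phase 1, set $R_0 := R$ and $R_k := R\langle t_2,\ldots,t_{k+1}\rangle$, and prove by induction on $k \le n-2$ that $(R_k,V) \models \Sigma(1)$. The base case is the hypothesis. For the inductive step I apply Lemma \ref{sigma} to the one-step extension $R_k \hookrightarrow R_k\langle t_{k+2}\rangle$; its hypothesis demands that every element of $R_k\langle t_{k+2}\rangle$ realising the cut of $V(R_k)$ in $R_k$ actually lies in $V(\mathcal R)$. The decisive input here is that $t_{k+2}$ realises the \emph{global} invariant type $p$ (and not merely $p|_{R_k}$), so that $t_{k+2} < \mathcal R^{>V(R)}$ and consequently $t_{k+2} \in V(\mathcal R)$. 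Any $a = h(t_{k+2},\bar r)$ with $h$ an $R_k$-definable function landing inside the cut of $V(R_k)$ but outside $V(\mathcal R)$ would, by continuity and monotonicity of $h$ on an interval around $t_{k+2}$ contained in $V(\mathcal R)$, yield an $R_k$-definable map from a cofinal segment of $V(R_k)$ to a coinitial segment of $R_k^{>V(R_k)}$, contradicting the inductive $\Sigma(1)$ for $R_k$ (which, by Fact \ref{ominresfield}, forces o-minimality of the residue field of $R_k$ and thereby precludes such a reflecting map).

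For Phase 2, having $(R',V) \models \Sigma(1)$ in hand, Fact \ref{sigma1implesigman} gives $(R',V) \models \Sigma(m)$ for every $m$. Suppose for contradiction that an offending $f$ exists, with $f(c_k) = d_k$, $(c_k)$ cofinal in $V(R)$, and $(d_k)$ coinitial in the convex hull of $R^{>V(R)}$; in particular $d_k \notin V(\mathcal R)$ and $1/d_k \in \mathfrak m(R')$. Writing $f = F(\cdot,\underline{t})$ for an $R$-definable $F$, I would consider the $R$-definable set cut out by $G(x,\underline{y}) := 1/F(1/x,\underline{y})$ and apply $\Sigma(n-1)$ in $R'$ to the graph of $G$: one obtains a threshold $\epsilon_0 \in \mathfrak m(R')^{>0}$ past which the residues of the relevant fibres in $\mathbf{k}(R')$ stabilize. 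Specializing to $\underline{y} = \underline{t}$ and comparing the stabilized residue with the residues $\overline{1/d_k}$ forced by the coinitiality of $(d_k)$ — which, by o-minimality of $\mathbf{k}(R')$, cannot all coincide — yields the desired contradiction.

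The principal obstacle is Phase 1: the cut of $V$ strictly enlarges at each inductive step, since $V(R_{k+1})$ properly contains $V(R_k) \cup \{t_{k+2}\}$, so verifying the hypothesis of Lemma \ref{sigma} requires delicate use of the global invariance of $p$ to certify that no definable cut-element produced by $R_k$-definable functions escapes $V(\mathcal R)$. Once $\Sigma(1)$ is propagated up to $R'$, Phase 2 is a comparatively clean application of the residue stabilization supplied by $\Sigma(m)$ to the inverted version of $f$.
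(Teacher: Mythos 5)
There is a genuine gap in your Phase~1. The inductive step asks you to verify the hypothesis of Lemma~\ref{sigma}, namely that every $a = h(t_{k+2},\bar r) \in R_k\langle t_{k+2}\rangle$ realizing the cut of $V(R_k)$ in $R_k$ lies in $V(\mathcal R)$. Your observation that $t_{k+2}\models p$ globally and hence $t_{k+2}\in V(\mathcal R)$ is correct and relevant, but the deduction you draw from a hypothetical $h(t_{k+2})\in$ cut $\setminus V(\mathcal R)$ is wrong. First, there is no $R_k$-definable interval around $t_{k+2}$ contained in $V(\mathcal R)$: any such interval has endpoints in $R_k$, and its right endpoint must lie in $R_k^{>V(R_k)}\subseteq \mathcal R^{>V(\mathcal R)}$, so it cannot be contained in $V(\mathcal R)$. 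More importantly, $h$ must be increasing on the cut (Fact~\ref{decreasing}), and then for every $c\in V(R_k)$ (so $c < t_{k+2}$) one has $h(c) < h(t_{k+2}) < R_k^{>V(R_k)}$, whence $h(c)\in V(R_k)$. Thus $h$ sends a cofinal segment of $V(R_k)$ into $V(R_k)$, and your claimed reflecting map from $V(R_k)$ onto a coinitial segment of $R_k^{>V(R_k)}$ simply does not arise. The fact that $h$ crosses the cut of $V(\mathcal R)$ (taking $t_{k+2}\in V(\mathcal R)$ to something above it) is not a definable phenomenon and does not produce an $R_k$-definable reflecting map; so there is nothing to contradict $\Sigma(1)$ or Fact~\ref{decreasing} here. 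Indeed, the appeal to ``the inductive $\Sigma(1)$ for $R_k$'' to exclude a reflecting $R_k$-definable map is also off: the nonexistence of such a map, when one exists, is just Fact~\ref{decreasing} and needs no $\Sigma(1)$.

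The paper's route avoids this problem entirely. Rather than trying to show that an \emph{arbitrary} Morley sequence in $p$ satisfies the hypothesis of Lemma~\ref{sigma} (which need not be the case), it replaces $\underline t$ by another tuple of the \emph{same $\mathcal L_0$-type over $R$} which is ``low'' in the cut, i.e.\ chosen so that $V(a)$ holds for every $a\in R\langle\underline t\rangle$ with $V(R)<a<R^{>V(R)}$. This is legitimate because condition~(1) of Proposition~\ref{equivalent to star} depends only on the $\mathcal L_0$-type of $\underline t$ over $R$. Once such a tuple is fixed, the hypothesis of Lemma~\ref{sigma} holds at every stage of the iteration (since the cut of $V(R_k)$ in $R_k$ refines the cut of $V(R)$ in $R$), and Lemma~\ref{sigma} yields $(R\langle\underline t\rangle,V)\models\Sigma(1)$. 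Your Phase~2 is then essentially the paper's endgame (the paper applies $\Sigma(1)$ on $R\langle\underline t\rangle$ directly to the function $\tfrac{1}{f_{\underline t}^{-1}}\circ\tfrac{1}{x}$ rather than invoking $\Sigma(n-1)$, but that is a minor stylistic difference). The missing idea in your argument is precisely this replacement step; without it, Phase~1 does not go through.
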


\begin{proof}
Assume to the contrary that $\underline{t}=t_1 , \dots , t_n$ is a finite Morley sequence in $p|_R$ and $f\colon R^{n+1} \to R$ is $\mathcal{L}_0$-definable over $R$ such that $f_{\underline{t}}$ takes a cofinal segment of $V(R)$ to a coinitial segment of the convex hull of $R^{>V(R)}$ in $\mathcal{R}$.  Note that we may replace $\underline{t}$ with any other tuple satisfying the same o-minimal type over $R$, and thus we may assume that, letting $R_{\underline{t}}=R\langle \underline{t} \rangle$, one has $V(a)$ for each $a\in R_{\underline{t}}$ with $V(R)<a<R^{>V(R)}$.  Thus by Lemma \ref{sigma}, $(R_{\underline{t}} , V ) \models \Sigma (1)$. 

After replacing $f_{\underline{t}}$ (restricted to a suitable interval) with $\frac{1}{f_{\underline{t}}^{-1}} \circ \frac{1}{x}$, we obtain a function that maps a cofinal segment of $\ma$ to a coinitial segment of $V(R)^{>\ma}$, where $\ma$ is the maximal ideal of $V(R)$.  The function $f_{\underline{t}}$ is definable over $R_{t}$, decreasing, and maps a cofinal segment of $\ma_{\underline{t}}$ (the maximal ideal of $V(R_{\underline{t}} )$) to a proper convex subset of $V(R_{\underline{t}} )^{>\ma_{\underline{t}} }$ and, after composition with the residue map, we obtain a function definable in $R_{\underline{t}}$ whose image is not eventually constant on $\ma_{\underline{t}}$, contradicting $(R_{\underline{t}} , V ) \models \Sigma (1)$.   
 
\end{proof}

\noindent We now turn to Corollary 3.4 \cite{mc} in which we claim that $(R,V)\models \Sigma (1)$ whenever $V$ is the convex hull of $\mathbb{Q}$.  Our proof of this now requires property $\ast$.

\begin{theorem}\label{Cor 3.4}   [Corollary 3.4 \cite{mc}]
Consider $(R,V)$ where  $V$ is the convex hull of $\mathbb{Q}$.  If $(R,V)$ has property $\ast$, then   $(R,V)\models \Sigma (1)$.
\end{theorem}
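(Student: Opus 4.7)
The plan is to prove the contrapositive: assuming $V = \text{conv}(\mathbb{Q})$ and $(R,V) \not\models \Sigma(1)$, I will construct a witness to the failure of property $\ast$. This reverses the transformation used in Proposition \ref{Sigma implies star}, where the recipe $f \mapsto \frac{1}{f^{-1}} \circ \frac{1}{x}$ converts a violation of $\ast$ into a violation of $\Sigma(1)$. The hypothesis $V = \text{conv}(\mathbb{Q})$ supplies the extra arithmetic structure that makes the reverse direction go through: $\mathbb{N}$ is cofinal in $V(R)$, the reciprocal map gives a bijection between $R^{>V(R)}$ and $\mathfrak{m}(R)^{>0}$, and the residue field $\mathbf{k}$ is archimedean.

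First I would apply Fact \ref{ominresfield} to conclude that the residue field of $(R,V)$ is not o-minimal, and extract an $R$-definable set $X \subseteq I_R^2$ witnessing $\neg \Sigma(1)$: for every $\epsilon_0 \in \mathfrak{m}^{>0}$ there is $\epsilon \in \mathfrak{m}^{>\epsilon_0}$ with $\overline{X(\epsilon_0)} \neq \overline{X(\epsilon)}$. Using o-minimal cell decomposition, I would isolate from $X$ an $R$-definable continuous monotone function $g\colon (0,\delta] \to V(R)$ whose residue $\overline{g(\epsilon)}$ does not stabilize as $\epsilon \to 0^+$. After translating and possibly flipping sign, I may assume $g$ is increasing with $g(\epsilon) \to 0^+$. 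The non-stabilization then amounts to $g\bigl(\mathfrak{m}(R)^{>0}\bigr)$ spanning more than one residue class in $\mathbf{k}$.

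Next I would pick a Morley sequence $t_1 > t_2 > t_3$ in $p|_R$ and define an $R t_2$-definable function $F$ via the reverse recipe $F = \frac{1}{g^{-1}} \circ \frac{1}{x}$, rescaled through $t_2$. Concretely, $F(y) := 1/g^{-1}\!\bigl(g(1/t_2)\cdot y\bigr)$ (with piecewise adjustments, if necessary, to ensure the values land in the convex hull of $R^{>V(R)}$ in $\mathcal{R}$). Since $1/t_2 \in \mathfrak{m}(\mathcal{R})^{>0}$ lies below every positive element of $\mathfrak{m}(R)$, the products $g(1/t_2)\cdot n$ for $n \in \mathbb{N}$ trace out a cofinal family in $\mathfrak{m}(\mathcal{R})$, and applying $g^{-1}$ and then the reciprocal map will push this into a coinitial sequence in the convex hull of $R^{>V(R)}$. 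If this is verified, then by Proposition \ref{equivalent to star}(1), $F$ exhibits a failure of property $\ast$ at the Morley sequence $t_1, t_2, t_3$, completing the contrapositive.

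The hard part will be the verification in the previous paragraph — namely, that (i) each $F(n)$ for $n\in\mathbb{N}$ truly lies in the convex hull of $R^{>V(R)}$, (ii) $\{F(n)\}_{n\in\mathbb{N}}$ is coinitial there, and (iii) the extension $F(t_3)$ falls strictly above $t_1$. Step (i)–(ii) are where the failure of $\overline{g}$ to stabilize, combined with archimedeanness of $\mathbf{k}$ and $\mathbb{N}$ being cofinal in $V(R)$, is used: non-stabilization guarantees that $g$ "visits" cofinally many residue classes, which pulls back via $g^{-1}$ to a cofinal family of scales and, after reciprocation, produces the coinitiality in the convex hull. Step (iii) follows once (i)–(ii) hold, exactly as in the proof of $(2)\Rightarrow(1)$ of Proposition \ref{equivalent to star}. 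A case analysis on where $g$ "transitions" between $\mathfrak{m}$ and $V\setminus \mathfrak{m}$ (inside $\mathfrak{m}(R)$, or at its top) will likely be needed to handle the precise form of $F$ in each case.
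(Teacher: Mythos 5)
The paper's own proof goes in a completely different direction: it uses property $\ast$ to invoke Lemma~\ref{elsubstr} and build, by repeatedly adjoining realizations of cuts in $\mathbb{Q}$, an elementary extension $(R_1,V)\succeq (R,V)$ whose residue field is all of $\mathbb{R}$; the residue structure, being weakly o-minimal on a Dedekind complete field, is then o-minimal, so $\Sigma(1)$ holds in $(R_1,V)$ and transfers down. Your proposal instead attempts a direct contrapositive, manufacturing a witness to $\neg\ast$ out of a witness to $\neg\Sigma(1)$. This is not merely a different route; I believe it has genuine gaps.

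First, there is a direction error at the heart of the scaling step. Since $t_2$ realizes $V(R)<x<R^{>V(R)}$, taking reciprocals gives $\mathfrak{m}(R)^{>0}<1/t_2$, i.e.\ $1/t_2$ lies \emph{above} every positive element of $\mathfrak{m}(R)$, not below as you assert. With the orientation corrected, $g(1/t_2)$ is not below the values $g(\epsilon)$ for $\epsilon\in\mathfrak{m}(R)^{>0}$ but above them, which breaks the intended picture that $g(1/t_2)\cdot n$ climbs up from below through $\mathfrak{m}$. Second, even bracketing the orientation, the claim that $\{g(1/t_2)\cdot n : n\in\mathbb{N}\}$ is cofinal in $\mathfrak{m}(\mathcal{R})$ is unjustified and in general false: cofinality would require $\mu/g(1/t_2)\in V(\mathcal{R})$ for every $\mu\in\mathfrak{m}(\mathcal{R})^{>0}$, and there is no reason for $g(1/t_2)$ to be multiplicatively maximal among infinitesimals (consider $\mu=\sqrt{g(1/t_2)}$). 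Third, the reduction ``after translating \dots\ I may assume $g(\epsilon)\to 0^+$'' is not available when the residues $\overline{g(\epsilon)}$ converge to some $\alpha\in\mathbb{R}\setminus\mathbf{k}$, which is precisely the situation one expects when $\mathbf{k}$ is a proper archimedean subfield of $\mathbb{R}$; you cannot subtract an element of $R$ with residue $\alpha$. This case is not a corner case — it is the generic form of a non-o-minimal archimedean residue field — and handling it seems to require either an auxiliary difference trick (e.g.\ replacing $g(\epsilon)$ by $g(\epsilon)-g(c\epsilon)$) or, as the paper does, passing to an elementary extension where the relevant real becomes a residue. Finally, there is a more structural worry: an $R$-definable monotone $g$ with $\overline{g}\to 0^+$ maps the coinitial end of $\mathfrak{m}^{>0}$ to the coinitial end of $V^{>\mathfrak{m}}$, whereas the transformation in Proposition~\ref{Sigma implies star} produces a \emph{decreasing}, $R\underline{t}$-definable function carrying the cofinal end of $\mathfrak{m}$ to the coinitial end of $V^{>\mathfrak{m}}$. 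These are opposite orientations, and Fact~\ref{decreasing} shows the needed decreasing function cannot be $R$-definable; so the extra parameters from the Morley sequence are doing real work that the sketch of $F$ does not pin down. Taken together, these issues leave the proposed construction unverified, and I do not see how to repair it without in effect reproving the paper's argument.
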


\begin{proof}
Work in a sufficiently saturated elementary extension $(\mathcal{R}, V)$.   Note that each cut in $\mathbb{Q}$ is realized in $\mathcal{R}$.  Take an element $a\in \mathcal{R}$ realizing such a cut.   If $\ra$ contains no element $\tilde a$ with $V(R)<\tilde a<R^{>V(R)}$ then note that in $(\ra, V)$, $V$ is again the convex hull of $\mathbb{Q}$.  If $\ra$ contains $\tilde a$ with $V(R)<\tilde a<R^{>V(R)}$ then there is an $R$-definable function, $f$, mapping the cut of $a$ to the cut in $R$ corresponding to $V$.  If $f(a)>V$, then note that $V(\ra)$ is again the convex hull of $\mathbb{Q}$ (by Theorem \ref{elextnew} and Lemma \ref{basestep}).  If $V(f(a))$, choose $b\in \mathcal{R}$ with $V(\mathcal{R})<b<R^{>V(R)}$, and replace $a$ with $f^{-1}(b)$.  Again, $V(\ra)$ is the convex hull of $\mathbb{Q}$.
 
 By Lemma \ref{elsubstr}, $(\ra, V)\preceq (\mathcal{R}, V)$.  Thus we may repeatedly add realizations of cuts in $\mathbb{Q}$ while keeping $V$ the convex hull of $\mathbb{Q}$ until we have built a model $(R_1,V)$ which is an elementary extension of $(R,V)$ and which has $\mathbb{R}$ as a residue field.  This residue field is necessarily o-minimal, and hence $(R_1,V)\models\Sigma (1) $, as does $(R,V)$.
\end{proof}

\end{section}

\end{document}